\documentclass[10pt,a4paper]{article}

\usepackage[a4paper,left=2.5cm,right=2.5cm,
top=2.5cm,bottom=2.5cm]{geometry}

\usepackage{amsmath,amssymb,amsfonts,amsthm}
\usepackage{mathrsfs}
\usepackage{graphicx}
\usepackage{booktabs}
\usepackage{multirow}
\usepackage{xcolor}
\usepackage{enumerate}
\usepackage{appendix}
\usepackage{hyperref}

\newtheorem{theorem}{Theorem}[section]

\newtheorem{lemma}[theorem]{Lemma}

\theoremstyle{definition}

\newtheorem{example}[theorem]{Example}

\theoremstyle{remark}

\numberwithin{equation}{section}

\title{\bfseries
Ricci curvature bounds for Statistical Submersions
}

\author{
Ravindra Singh
\\[2ex]
Department of Mathematics, Banaras Hindu University, Varanasi 221005, India\\
E-mail: khanelrs@bhu.ac.in\\
ORCID: 0009-0009-1270-3831
}

\date{}

\begin{document}

\maketitle

\begin{abstract}
In this paper, we study Ricci curvature bounds for statistical submersions
from a direct Ricci-curvature perspective. Although Chen--Ricci inequalities
for statistical submersions have already been established, a complementary
lower estimate is needed to obtain a two-sided description of the Ricci
curvature. Motivated by this observation, we combine the Ricci curvature
relations of a statistical submersion with Hineva's algebraic inequality.
We first obtain a Chen--Ricci upper estimate and a Hineva-type lower
estimate along the vertical distribution, expressed in terms of the
intrinsic Ricci curvature of the fibres and the fundamental tensors
$T$ and $T^{*}$. The two estimates together provide upper and lower
bounds for the vertical Ricci curvature. We then derive a Chen--Ricci
inequality along the horizontal distribution involving the fundamental
tensors $A$ and $A^{*}$. By combining the vertical and horizontal
curvature relations, we further obtain corresponding Chen--Ricci and
Hineva-type estimates for the mixed distribution. The equality conditions
are characterized in terms of the components of the fundamental tensors
and their duals. Explicit examples are given to illustrate both equality
and strict inequality cases. Thus, the paper provides a unified
Ricci-curvature approach to statistical submersions and, in particular,
gives the first Hineva-type lower Ricci curvature estimates for statistical
submersions, leading to two-sided Ricci curvature bounds.
\end{abstract}

\noindent
\textbf{Keywords:}
Statistical manifold; statistical submersion; Ricci curvature;
Chen--Ricci inequality; Hineva inequality; fundamental tensors;
vertical distribution; horizontal distribution; mixed distribution.

\noindent
\textbf{2020 Mathematics Subject Classification:}
53B12; 53B25; 53C20; 53C25.

\section{Introduction}

The study of curvature inequalities is an important part of differential
geometry, particularly in understanding the relationship between
intrinsic curvature and the geometric structure of a manifold. Among the
various curvature quantities, Ricci curvature plays a central role since
it describes the curvature in a prescribed tangent direction. This makes
Ricci curvature inequalities particularly useful in the study of
submanifolds, submersions, and related geometric structures.

Statistical geometry provides a natural extension of Riemannian geometry
by replacing the single Levi--Civita connection with a pair of mutually
dual torsion-free affine connections. More precisely, a statistical
manifold is a triple $(M,\nabla,g)$ satisfying
\[
Xg(Y,Z)
=
g(\nabla_XY,Z)+g(Y,\nabla_X^{*}Z).
\]
The duality between $\nabla$ and $\nabla^{*}$ introduces additional
geometric information and makes statistical manifolds an important
framework in information geometry and statistical inference; see
\cite{Noguchi1992,AmariNagaoka2000}.

The geometry of Riemannian submersions was developed systematically by
O'Neill \cite{ONeill1966} and Gray \cite{Gray1967}. A Riemannian
submersion
\[
F:(M,g_{1})\longrightarrow(N,g_{2})
\]
naturally decomposes the tangent bundle into the vertical and horizontal
distributions,
\[
TM=\mathcal V\oplus\mathcal H,
\qquad
\mathcal V=\ker F_{*},
\qquad
\mathcal H=(\ker F_{*})^{\perp}.
\]
The interaction between these two distributions is described by the
fundamental tensors $T$ and $A$. In recent years, various general
curvature inequalities for Riemannian submersions and Riemannian maps
have been established, including generalized Chen-type inequalities,
Chen--Ricci inequalities, and Casorati curvature inequalities; see,
for example,
\cite{Singh_26_GCI,Singh_26_RCB,Singh_26_CSM,Singh_26_GCF,Singh_26_GCR,Singh_Meena_Meena_26_JMAA,Singh_26_GCFRM}.
These developments provide a natural geometric background for
investigating analogous curvature inequalities in more general
settings. The statistical counterpart of this
theory was developed through affine submersions by Abe and Hasegawa
\cite{AbeHasegawa2001} and was subsequently developed further by Takano
\cite{Takano2004}. Various classes of statistical submersions associated
with additional geometric structures have since been investigated; see
\cite{AytimurOzgur2019,VilcuVilcu2015,Vilcu2021,
SiddiquiAlghamdi2025}.

Curvature inequalities for statistical submersions have also received
attention. In particular, Chen--Ricci inequalities provide upper
estimates for Ricci curvature in terms of geometric quantities associated
with a submersion. In the statistical setting, Siddiqui, Chen and
Siddiqi \cite{SiddiquiChenSiddiqi2021} established Chen--Ricci
inequalities for statistical submersions and obtained, in addition, a
Chen-type inequality involving the $\delta(2,2)$-invariant. Thus, the
existence of Chen--Ricci inequalities in the setting of statistical
submersions is already known.

A natural question is whether these upper estimates can be complemented
by corresponding lower estimates. Such a complementary viewpoint is
provided by Hineva's approach \cite{Hineva2008}, which gives algebraic
lower bounds for Ricci curvature and characterizes their equality cases.
This leads to a natural possibility in the statistical submersion setting:
rather than considering only an upper Ricci curvature estimate, one can
seek a pair of estimates which controls the Ricci curvature from both
sides. The presence of the dual connections makes this problem especially
interesting, since the fundamental tensors occur in dual pairs $(T,T^{*})$ and $(A,A^{*})$.

Motivated by this observation, in the present paper we develop a direct
Ricci-curvature approach to curvature inequalities for statistical
submersions. The advantage of this approach is that the estimates are
obtained directly from the Ricci curvature relations associated with the
vertical, horizontal, and mixed distributions. We first consider the
vertical distribution. By using the curvature relation for vertical
vectors, we obtain a Chen--Ricci upper estimate involving the intrinsic
Ricci curvature of the fibres and the traces of $T$ and $T^{*}$. We then
apply Hineva's algebraic inequality to the same Ricci curvature relation
and obtain a corresponding lower estimate. Consequently, the vertical
Ricci curvature is bounded from above and below within the same
framework.

The lower estimate obtained in this way gives a Hineva-type inequality
for statistical submersions. To the best of our knowledge, this provides
the first Hineva-type lower Ricci curvature inequality in the setting of
statistical submersions. Thus, the combination of the Chen--Ricci and
Hineva estimates gives a two-sided Ricci curvature bound, which is the
main motivation for the present approach.

The same geometric viewpoint naturally leads to the horizontal
distribution. Using the horizontal curvature relation and the
fundamental tensors $A$ and $A^{*}$, we derive a Chen--Ricci inequality
for the horizontal Ricci curvature. Finally, the vertical and horizontal
curvature relations are combined to study the mixed distribution. This
yields corresponding Chen--Ricci and Hineva-type estimates for the mixed
Ricci curvature and completes the treatment of the three natural
distributions associated with a statistical submersion.

An important part of the study concerns the equality cases. The equality
conditions are expressed in terms of the components of the fundamental
tensors and their duals, and hence describe the geometric configurations
under which the corresponding Ricci curvature bounds are attained. We
also provide explicit examples in which equality holds as well as
examples in which the inequalities are strict. These examples show that
the obtained bounds are non-trivial and that the equality conditions are
genuinely realizable.

Therefore, the main contribution of this work is not the introduction of
the Chen--Ricci inequality itself for statistical submersions, which is
already known, but rather the use of a direct Ricci-curvature approach to
obtain a complementary Hineva-type lower estimate and, consequently, a
two-sided description of Ricci curvature. The vertical, horizontal, and
mixed distributions are treated within a common framework, providing a
basis for further curvature inequalities in statistical submersion
geometry.

The paper is organized as follows. Section~2 recalls the basic definitions,
fundamental tensors, curvature relations, and Ricci curvature quantities
needed in the sequel. In Section~3, we establish the Chen--Ricci and
Hineva inequalities along the vertical distribution. Section~4 is
devoted to the Chen--Ricci inequality along the horizontal distribution.
In Section~5, we derive the corresponding Chen--Ricci and Hineva-type
inequalities along the mixed distribution and discuss their equality
conditions. Finally, examples are presented to illustrate the obtained
estimates and their equality and strictness cases.

\section{Preliminaries}

In this section, we recall the basic notions and curvature formulas for
statistical manifolds and statistical submersions that will be used
throughout the paper. We first recall the decomposition associated with a
Riemannian submersion and then introduce the fundamental tensors associated
with the statistical connections and their duals. We also define the
component notation, trace terms, mean curvature vectors, divergence terms,
Ricci curvatures, scalar curvatures, and the algebraic inequality required
in the sequel.

\subsection{Statistical manifolds and the difference tensor}

Let $(M,g)$ be a Riemannian manifold and let $\nabla$ be a torsion-free
affine connection on $M$. The triple $(M,\nabla,g)$ is called a
\emph{statistical manifold} if there exists a torsion-free affine
connection $\nabla^{*}$, called the dual connection of $\nabla$ with
respect to $g$, such that
\begin{equation}
Xg(Y,Z)
=
g(\nabla_XY,Z)+g(Y,\nabla^{*}_{X}Z)
\label{eq-statistical-manifold}
\end{equation}
for all vector fields $X,Y,Z$ on $M$.

Let $\nabla^{0}$ denote the Levi--Civita connection of the Riemannian
metric $g$. The duality relation in \eqref{eq-statistical-manifold} implies
\begin{equation}
\nabla^{0}
=
\frac{1}{2}\left(\nabla+\nabla^{*}\right).
\label{eq-LC-stat}
\end{equation}
We introduce the difference tensor $K$ by
\begin{equation}
K_{X}Y
=
\nabla_XY-\nabla^{0}_{X}Y
=
-\left(\nabla^{*}_{X}Y-\nabla^{0}_{X}Y\right).
\label{eq-difference-K}
\end{equation}
Consequently,
\begin{equation}
\nabla_XY
=
\nabla^{0}_{X}Y+K_XY,
\qquad
\nabla^{*}_{X}Y
=
\nabla^{0}_{X}Y-K_XY.
\label{eq-connections-K}
\end{equation}
The tensor $K$ is symmetric in $X$ and $Y$ and satisfies
\begin{equation}
g(K_XY,Z)=g(Y,K_XZ).
\label{eq-K-symmetry}
\end{equation}
The tensor $K$ will also be used in the examples to construct explicit
statistical structures.

\subsection{Riemannian submersions and statistical submersions}

Let
\[
F:(M,g_1)\longrightarrow (N,g_2)
\]
be a Riemannian submersion. The tangent bundle of $M$ admits the
orthogonal decomposition
\begin{equation}
TM=\mathcal V\oplus\mathcal H,
\label{eq-vertical-horizontal}
\end{equation}
where
\[
\mathcal V=\ker F_{*},
\qquad
\mathcal H=(\ker F_{*})^{\perp}.
\]
The distributions $\mathcal V$ and $\mathcal H$ are called the
\emph{vertical} and \emph{horizontal} distributions, respectively. We put
\[
r=\dim\mathcal V,
\qquad
s=\dim\mathcal H.
\]

Let $\nabla^{0}$ be the Levi--Civita connection of $(M,g_1)$. The
fundamental tensors of the Riemannian submersion are given by
\begin{equation}
T^{0}_{E}F
=
\mathcal V\nabla^{0}_{\mathcal VE}\mathcal HF
+
\mathcal H\nabla^{0}_{\mathcal VE}\mathcal VF,
\label{eq-T0}
\end{equation}
and
\begin{equation}
A^{0}_{E}F
=
\mathcal V\nabla^{0}_{\mathcal HE}\mathcal HF
+
\mathcal H\nabla^{0}_{\mathcal HE}\mathcal VF,
\label{eq-A0}
\end{equation}
for all vector fields $E,F$ on $M$. In particular, for
$U,V\in\Gamma(\mathcal V)$ and $X,Y\in\Gamma(\mathcal H)$,
\begin{equation}
T^{0}_{U}V=T^{0}_{V}U,
\qquad
A^{0}_{X}Y=-A^{0}_{Y}X
=
\frac12\mathcal V[X,Y].
\label{eq-T0-A0-properties}
\end{equation}

Now let $(M,\nabla,g_1)$ and $(N,\nabla^{2},g_2)$ be statistical
manifolds. The Riemannian submersion
\[
F:(M,\nabla,g_1)\longrightarrow(N,\nabla^{2},g_2)
\]
is called a \emph{statistical submersion} if, for any basic horizontal
vector fields $X,Y$,
\begin{equation}
F_{*}(\nabla_XY)
=
\nabla^{2}_{F_{*}X}F_{*}Y.
\label{eq-stat-submersion}
\end{equation}

The affine connections induced by $\nabla$ and $\nabla^{*}$ on the fibres
are defined by
\begin{equation}
\nabla^{\mathcal V}_{U}V
=
\mathcal V\nabla_UV,
\qquad
\nabla^{*\mathcal V}_{U}V
=
\mathcal V\nabla^{*}_{U}V.
\label{eq-induced-fibre-connections}
\end{equation}

The fundamental tensors associated with $\nabla$ and $\nabla^{*}$ are
defined by
\begin{equation}
T_EF
=
\mathcal V\nabla_{\mathcal VE}\mathcal HF
+
\mathcal H\nabla_{\mathcal VE}\mathcal VF,
\label{eq-T}
\end{equation}
\begin{equation}
T^{*}_EF
=
\mathcal V\nabla^{*}_{\mathcal VE}\mathcal HF
+
\mathcal H\nabla^{*}_{\mathcal VE}\mathcal VF,
\label{eq-Tstar}
\end{equation}
and
\begin{equation}
A_EF
=
\mathcal V\nabla_{\mathcal HE}\mathcal HF
+
\mathcal H\nabla_{\mathcal HE}\mathcal VF,
\label{eq-A}
\end{equation}
\begin{equation}
A^{*}_EF
=
\mathcal V\nabla^{*}_{\mathcal HE}\mathcal HF
+
\mathcal H\nabla^{*}_{\mathcal HE}\mathcal VF.
\label{eq-Astar}
\end{equation}

For $U,V\in\Gamma(\mathcal V)$ and
$X,Y\in\Gamma(\mathcal H)$, the duality relation gives
\begin{equation}
g_1(T_UV,X)
=
-g_1(V,T^{*}_UX),
\label{eq-dual-T}
\end{equation}
and
\begin{equation}
g_1(A_XY,V)
=
-g_1(Y,A^{*}_XV).
\label{eq-dual-A}
\end{equation}
Moreover,
\begin{equation}
A_XY=-A^{*}_YX.
\label{eq-A-Astar}
\end{equation}

The covariant derivatives of vertical and horizontal vector fields can be
decomposed as
\begin{equation}
\nabla_UV
=
T_UV+\nabla^{\mathcal V}_UV,
\qquad
\nabla^{*}_UV
=
T^{*}_UV+\nabla^{*\mathcal V}_UV,
\label{eq-decomp-1}
\end{equation}
\begin{equation}
\nabla_UX
=
T_UX+\mathcal H\nabla_UX,
\qquad
\nabla^{*}_UX
=
T^{*}_UX+\mathcal H\nabla^{*}_UX,
\label{eq-decomp-2}
\end{equation}
\begin{equation}
\nabla_XU
=
A_XU+\mathcal V\nabla_XU,
\qquad
\nabla^{*}_XU
=
A^{*}_XU+\mathcal V\nabla^{*}_XU,
\label{eq-decomp-3}
\end{equation}
and
\begin{equation}
\nabla_XY
=
\mathcal H\nabla_XY+A_XY,
\qquad
\nabla^{*}_XY
=
\mathcal H\nabla^{*}_XY+A^{*}_XY.
\label{eq-decomp-4}
\end{equation}

\subsection{Components, traces and mean curvature vectors}

Let
\[
\left\{
V_1,\ldots,V_r,h_1,\ldots,h_s
\right\}
\]
be a local orthonormal frame adapted to the decomposition
\eqref{eq-vertical-horizontal}, where
\[
\{V_1,\ldots,V_r\}
\]
is a local orthonormal frame of $\mathcal V$ and
\[
\{h_1,\ldots,h_s\}
\]
is a local orthonormal frame of $\mathcal H$.

We define the components of $T,T^{*},A,A^{*}$ by
\begin{equation}
T_{ij}^{\alpha}
=
g_1(T_{V_i}V_j,h_\alpha),
\qquad
T_{ij}^{*\alpha}
=
g_1(T^{*}_{V_i}V_j,h_\alpha),
\label{eq-T-components}
\end{equation}
where
\[
1\leq i,j\leq r,
\qquad
1\leq\alpha\leq s,
\]
and
\begin{equation}
A_{ij}^{\alpha}
=
g_1(A_{h_i}h_j,V_\alpha),
\qquad
A_{ij}^{*\alpha}
=
g_1(A^{*}_{h_i}h_j,V_\alpha),
\label{eq-A-components}
\end{equation}
where
\[
1\leq i,j\leq s,
\qquad
1\leq\alpha\leq r.
\]

The Levi--Civita tensor $T^{0}$ satisfies
\begin{equation}
2T^{\alpha 0}_{ij}
=
T_{ij}^{\alpha}+T_{ij}^{*\alpha}.
\label{eq-T0-components}
\end{equation}

The trace vector fields of $T$ and $T^{*}$ are defined by
\begin{equation}
N=\operatorname{trace}T
=
\sum_{i=1}^{r}T_{V_i}V_i,
\qquad
N^{*}=\operatorname{trace}T^{*}
=
\sum_{i=1}^{r}T^{*}_{V_i}V_i.
\label{eq-trace-T}
\end{equation}
Consequently, the mean curvature vector fields of the fibres with respect
to $\nabla$ and $\nabla^{*}$ are
\begin{equation}
H=\frac{1}{r}N,
\qquad
H^{*}=\frac{1}{r}N^{*}.
\label{eq-mean-curvature}
\end{equation}
In particular, the fibres are minimal with respect to $\nabla$ if and
only if $N=0$, and minimal with respect to $\nabla^{*}$ if and only if
$N^{*}=0$.

Similarly, define the horizontal trace vector fields
\begin{equation}
\sigma
=
\operatorname{trace}A
=
\sum_{i=1}^{s}A_{h_i}h_i,
\qquad
\sigma^{*}
=
\operatorname{trace}A^{*}
=
\sum_{i=1}^{s}A^{*}_{h_i}h_i.
\label{eq-sigma}
\end{equation}

The squared norm of $\operatorname{trace}A$ is
\begin{equation}
\left\|\operatorname{trace}A\right\|^{2}
=
\|\sigma\|^{2}
=
\sum_{\alpha=1}^{r}
\left(
\sum_{j=1}^{s}A_{jj}^{\alpha}
\right)^{2},
\label{eq-nrm-A}
\end{equation}
and similarly
\begin{equation}
\left\|\operatorname{trace}A^{*}\right\|^{2}
=
\|\sigma^{*}\|^{2}
=
\sum_{\alpha=1}^{r}
\left(
\sum_{j=1}^{s}A_{jj}^{*\alpha}
\right)^{2}.
\label{eq-nrm-Astar}
\end{equation}

For the component matrices of $T$ and $T^{*}$, we use
\begin{equation}
\operatorname{trace}(T^{\alpha})
=
\sum_{i=1}^{r}T_{ii}^{\alpha},
\qquad
\|T\|^{2}
=
\sum_{\alpha=1}^{s}
\sum_{i,j=1}^{r}
(T_{ij}^{\alpha})^{2},
\label{eq-T-norm}
\end{equation}
and
\begin{equation}
\operatorname{trace}(T^{*\alpha})
=
\sum_{i=1}^{r}T_{ii}^{*\alpha},
\qquad
\|T^{*}\|^{2}
=
\sum_{\alpha=1}^{s}
\sum_{i,j=1}^{r}
(T_{ij}^{*\alpha})^{2}.
\label{eq-Tstar-norm}
\end{equation}

Likewise, we put
\begin{equation}
\|A\|^{2}
=
\sum_{\alpha=1}^{r}
\sum_{i,j=1}^{s}
(A_{ij}^{\alpha})^{2},
\label{eq-A-norm}
\end{equation}
\begin{equation}
\|A^{*}\|^{2}
=
\sum_{\alpha=1}^{r}
\sum_{i,j=1}^{s}
(A_{ij}^{*\alpha})^{2},
\label{eq-Astar-norm}
\end{equation}
and
\begin{equation}
g_1(A,A^{*})
=
\sum_{\alpha=1}^{r}
\sum_{i,j=1}^{s}
A_{ij}^{\alpha}A_{ij}^{*\alpha}.
\label{eq-A-Astar-inner}
\end{equation}

\subsection{Divergence terms}

The mixed curvature relations used later contain divergence terms associated
with the trace vector fields $N,N^{*},\sigma,\sigma^{*}$. We define
\begin{equation}
\delta(N)
=
-\sum_{i=1}^{s}
g_1(\nabla_{h_i}N,h_i),
\qquad
\delta^{*}(N^{*})
=
-\sum_{i=1}^{s}
g_1(\nabla^{*}_{h_i}N^{*},h_i),
\label{eq-delta-N}
\end{equation}
and
\begin{equation}
\delta(\sigma)
=
-\sum_{i=1}^{r}
g_1(\nabla_{V_i}\sigma,V_i),
\qquad
\delta^{*}(\sigma^{*})
=
-\sum_{i=1}^{r}
g_1(\nabla^{*}_{V_i}\sigma^{*},V_i).
\label{eq-delta-sigma}
\end{equation}

These quantities measure the divergence of the trace parts of the
fundamental tensors with respect to the corresponding affine connections.
They naturally occur in the scalar-curvature decomposition used for the
mixed distribution.

\subsection{Curvature relations}

Let $R^{M}$ and $R^{\mathcal V}$ denote the curvature tensors associated
with $\nabla$ on $M$ and with the induced connection
$\nabla^{\mathcal V}$ on the fibres, respectively.

For vertical vector fields
$F_1,F_2,F_3,F_4\in\Gamma(\mathcal V)$, the statistical Gauss equation is
\begin{equation}
\begin{aligned}
R^{M}(F_1,F_2,F_3,F_4)
={}&
R^{\mathcal V}(F_1,F_2,F_3,F_4)\\
&+
g_1(T_{F_1}F_3,T^{*}_{F_2}F_4)
-
g_1(T_{F_2}F_3,T^{*}_{F_1}F_4).
\end{aligned}
\label{eq-Gauss-stat}
\end{equation}

For horizontal vector fields
$U_1,U_2,U_3,U_4\in\Gamma(\mathcal H)$, we have
\begin{align}
R^{M}(U_1,U_2,U_3,U_4)
={}&
R^{\mathcal H}(U_1,U_2,U_3,U_4)
-
g_1(A_{U_2}U_3,A^{*}_{U_1}U_4)
\nonumber\\
&+
g_1\left(
(A_{U_1}+A^{*}_{U_1})U_2,
A^{*}_{U_3}U_4
\right)
\nonumber\\
&+
g_1(A_{U_1}U_3,A^{*}_{U_2}U_4).
\label{eq-Gauss-hor}
\end{align}

For mixed vector fields
$U_1,U_2\in\Gamma(\mathcal H)$ and
$F_1,F_2\in\Gamma(\mathcal V)$, we have
\begin{equation}
R^{M}(U_1,F_1,F_2,U_2)
=
g_1(A_{U_1}F_1,A^{*}_{U_2}F_2)
-
g_1(T_{F_1}U_1,T^{*}_{F_2}U_2).
\label{eq-Gauss-mix}
\end{equation}

We shall also use the mixed curvature relation
\begin{align}
R^{M}(U_1,F_1,U_2,F_2)
={}&
g_1\left((\nabla_{U_1}T)_{F_1}U_2,F_2\right)
-
g_1\left((\nabla_{F_1}A)_{U_1}U_2,F_2\right)
\nonumber\\
&-
g_1(A_{U_1}F_1,A_{U_2}F_2)
+
g_1(T_{F_1}U_1,T_{F_2}U_2).
\label{eq-Gauss-mix-1}
\end{align}

\subsection{Ricci curvature associated with the distributions}

For a unit vertical vector $V_1\in\mathcal V_p$, we define the vertical
Ricci curvatures by
\begin{equation}
\operatorname{Ric}_{\mathcal V}^{M}(V_1)
=
\sum_{j=1}^{r}
R^{M}(V_1,V_j,V_j,V_1), \quad \operatorname{Ric}^{\mathcal V}(V_1)
=
\sum_{j=1}^{r}
R^{\mathcal V}(V_1,V_j,V_j,V_1).
\label{eq-Ric-vert}
\end{equation}

The corresponding Ricci curvatures with respect to the Levi--Civita
connection $\nabla^{0}$ are denoted by
\begin{equation}
\operatorname{Ric}^{0M}(V_1)
=
\sum_{j=2}^{r}
R^{0M}(V_1,V_j,V_j,V_1), \quad \operatorname{Ric}^{0\mathcal V}(V_1)
=
\sum_{j=2}^{r}
R^{0\mathcal V}(V_1,V_j,V_j,V_1).
\label{eq-Ric0}
\end{equation}

For a unit horizontal vector $h_1\in\mathcal H_p$, we define
\begin{equation}
\operatorname{Ric}_{\mathcal H}^{M}(h_1)
=
\sum_{j=2}^{s}
R^{M}(h_1,h_j,h_j,h_1), \quad \operatorname{Ric}^{\mathcal H}(h_1)
=
\sum_{j=2}^{s}
R^{\mathcal H}(h_1,h_j,h_j,h_1).
\label{eq-Ric-hor}
\end{equation}

The mixed Ricci contribution is defined by
\begin{equation}
\operatorname{Ric}^{\mathrm{mix}}(M)
=
\sum_{i=1}^{s}
\operatorname{Ric}_{\mathcal V}^{M}(h_i)
=
\sum_{i=1}^{s}\sum_{j=1}^{r}
R^{M}(h_i,V_j,V_j,h_i).
\label{eq-Ric-mix}
\end{equation}

\subsection{Scalar curvature and horizontal scalar curvature}

Let $\tau^{M}$ denote the scalar curvature of $(M,\nabla,g_1)$. With
respect to the adapted orthonormal frame, it decomposes as
\begin{equation}
\begin{aligned}
\tau^{M}
={}&
\sum_{1\leq i<j\leq r}
R^{M}(V_i,V_j,V_j,V_i)+
\sum_{1\leq i<j\leq s}
R^{M}(h_i,h_j,h_j,h_i)+
\sum_{i=1}^{s}\sum_{j=1}^{r}
R^{M}(h_i,V_j,V_j,h_i).
&
\end{aligned}
\label{eq-scalar-curvature}
\end{equation}

The scalar curvature associated with the horizontal distribution is
defined by
\begin{equation}
\tau^{\mathcal H}
=
\sum_{1\leq i<j\leq s}
R^{\mathcal H}(h_i,h_j,h_j,h_i).
\label{eq-horizontal-scalar}
\end{equation}

Using the horizontal curvature relation
\eqref{eq-Gauss-hor}, we obtain
\begin{equation}
\begin{aligned}
\sum_{1\leq i<j\leq s}
R^{M}(h_i,h_j,h_j,h_i)
={}&
\tau^{\mathcal H}
+\frac12\|\sigma\|^{2}
-\frac12g_1(A,A^{*})
-\|A\|^{2}.
\end{aligned}
\label{eq-horizontal-scalar-relation}
\end{equation}

This relation will be used in the derivation of the Chen--Ricci and
Hineva inequalities along the mixed distribution.

\subsection{An algebraic inequality of Hineva}

We recall the following algebraic inequality, which will be used to
derive the lower Ricci curvature estimates.

\begin{lemma}
\label{Hineva}
Let $B=(b_{ij})$ be a symmetric $k\times k$ matrix, where $k\geq2$.
Put
\[
b=\operatorname{trace}B,
\qquad
a=\|B\|^{2}.
\]
Then
\begin{equation}
b_{11}\sum_{i=1}^{k}b_{ii}
-
\sum_{i=1}^{k}b_{1i}^{2}
\geq
\frac{k-1}{k^{2}}
\left[
2b^{2}-ka
-(k-2)|b|
\sqrt{\frac{ka-b^{2}}{k-1}}
\right].
\label{eq-Hineva-algebraic}
\end{equation}
Equality holds if and only if $B$ is diagonal of the form
\[
B=\operatorname{diag}(b_1,b_2,\ldots,b_2),
\]
where
\begin{equation}
b_1
=
\frac{b}{k}
\mp
\frac{k-1}{k}
\sqrt{\frac{ka-b^{2}}{k-1}},
\label{eq-Hineva-b1}
\end{equation}
and
\begin{equation}
b_2
=
\frac{b}{k}
\pm
\frac{1}{k}
\sqrt{\frac{ka-b^{2}}{k-1}}.
\label{eq-Hineva-b2}
\end{equation}
\end{lemma}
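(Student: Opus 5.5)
The inequality is purely algebraic, so the plan is to fix the entry $b_{11}$, estimate the remaining quantities in terms of it, and then optimise over $b_{11}$. Put $x=b_{11}$, $q=\sum_{i=2}^{k}b_{1i}^{2}$ and $D=\sqrt{(ka-b^{2})/(k-1)}$. Note that $ka-b^{2}\geq \operatorname{trace}$-wise $k\sum b_{ii}^2-(\sum b_{ii})^2\ge 0$ by Cauchy--Schwarz, so $D$ is real. The left-hand side of \eqref{eq-Hineva-algebraic} equals
\[
xb-x^{2}-q .
\]

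The first step is to bound $q$ from above. Since $B$ is symmetric, each $b_{1i}$ with $i\geq2$ appears twice in $a=\|B\|^{2}$. Dropping the off-diagonal entries $b_{ij}$ with $i,j\geq2$ and applying Cauchy--Schwarz to $\sum_{i\geq2}b_{ii}=b-x$ gives
\[
a\;\geq\; x^{2}+2q+\sum_{i\geq2}b_{ii}^{2}\;\geq\; x^{2}+2q+\frac{(b-x)^{2}}{k-1}.
\]
Hence $q\leq\frac12\bigl(a-x^{2}-\frac{(b-x)^{2}}{k-1}\bigr)$. Substituting, the left-hand side is at least
\[
f(x):=xb-x^{2}-\tfrac12\Bigl(a-x^{2}-\tfrac{(b-x)^{2}}{k-1}\Bigr).
\]
The same display also forces $x^{2}+(b-x)^{2}/(k-1)\leq a$. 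Solving this quadratic constraint confines $x$ to the interval with endpoints
\[
x_{\mp}=\frac{b}{k}\mp\frac{k-1}{k}D,
\]
which are exactly the values $b_{1}$ in \eqref{eq-Hineva-b1}.

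The second step is the minimisation of $f$ on this interval. The coefficient of $x^{2}$ in $f$ is $-1+\frac12+\frac{1}{2(k-1)}=-\frac{k-2}{2(k-1)}\leq0$. So $f$ is concave (affine when $k=2$), and its minimum over the interval is attained at an endpoint. At an endpoint the bracket in $f$ vanishes, so $f(x_{\mp})=x_{\mp}(b-x_{\mp})$. A direct expansion, using $(k-1)D^{2}=ka-b^{2}$, gives
\[
f(x_{\mp})=\frac{k-1}{k^{2}}\bigl(2b^{2}-ka\mp(k-2)bD\bigr).
\]
The smaller of the two values is the one carrying $-(k-2)|b|D$, which is precisely the right-hand side of \eqref{eq-Hineva-algebraic}. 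This completes the inequality.

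For the equality case, every estimate in the chain must be sharp. First, all $b_{ij}$ with $i,j\geq2$, $i\neq j$, must vanish. Second, the first inequality in $a\geq x^{2}+2q+\sum_{i\geq2}b_{ii}^{2}$ must be an equality, so $q$ equals its upper bound. Third, since $x$ must sit at an endpoint of the interval, the constraint is active, which forces $q=0$. Fourth, equality in Cauchy--Schwarz forces $b_{22}=\dots=b_{kk}=b_{2}$, where $b_{2}=(b-b_{1})/(k-1)$; this gives \eqref{eq-Hineva-b2} with the opposite sign. Fifth, $b_{1}$ must be the endpoint realising the minimum. Together these say $B$ is diagonal of the stated form, and conversely such a $B$ visibly attains equality.

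I expect the main obstacle to be the bookkeeping in the equality discussion rather than the inequality itself. When $k>2$ and $b\neq0$, only one choice of the sign $\mp$ attains the minimum. When $k=2$ or $b=0$, both endpoints give the same value. The $\mp/\pm$ convention in \eqref{eq-Hineva-b1}--\eqref{eq-Hineva-b2} has to be read accordingly, and I would state this convention explicitly.
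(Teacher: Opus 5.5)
The paper gives no proof of this lemma; it is quoted from Hineva. Your argument therefore has to stand on its own, and for the inequality itself it does. The estimate $a\ge x^{2}+2q+(b-x)^{2}/(k-1)$ is correct, and so are the constraint interval $[x_-,x_+]$, the leading coefficient $-\frac{k-2}{2(k-1)}$ of $f$, and the endpoint values $f(x_\mp)=\frac{k-1}{k^{2}}\bigl(2b^{2}-ka\mp(k-2)bD\bigr)$. Taking the smaller endpoint value gives exactly the right-hand side.

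The gap is in the equality case when $k=2$. Your third step, ``$x$ must sit at an endpoint of the interval'', needs $f$ to be \emph{strictly} concave, i.e.\ $k\geq3$. For $k=2$, $f$ is not just affine but constant, $f\equiv\frac{b^{2}-a}{2}$. So every $x\in[x_-,x_+]$ is a minimiser, the constraint need not be active, and nothing forces $q=0$.

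This cannot be repaired, because the equality clause of the statement is false for $k=2$. There the inequality is the identity
\[
b_{11}b_{22}-b_{12}^{2}=\tfrac14\bigl(2b^{2}-2a\bigr),
\]
so every symmetric $2\times2$ matrix attains equality. For example, $B=\begin{pmatrix}0&1\\1&0\end{pmatrix}$ gives $-1$ on both sides, yet it is not diagonal. Your closing remark that both endpoints give the same value for $k=2$ sees only part of this. The ``only if'' direction should be claimed only for $k\geq3$; there your five-step argument is complete.

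For the ``if'' direction, your sign caveat is genuinely necessary and should be built into the statement rather than left as a reading convention. For $k\geq3$, $b\neq0$, $D>0$, only $b_{1}=\frac{b}{k}-\operatorname{sgn}(b)\frac{k-1}{k}D$ attains equality. For instance, with $k=3$ the matrix $\operatorname{diag}(2,1,1)$ has the displayed form with the lower signs, yet its left-hand side is $4$ while the right-hand side is $\frac{20}{9}$. So ``conversely such a $B$ visibly attains equality'' holds only with the sign pinned down in this way.
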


In the present setting, Lemma~\ref{Hineva} will be applied to the
symmetric component matrices
\[
(T_{ij}^{\alpha})
\qquad\text{and}\qquad
(T_{ij}^{*\alpha}),
\]
for each $\alpha=1,\ldots,s$. In conjunction with the Cauchy--Schwarz
inequality, this gives the Hineva-type lower estimates for the vertical
and mixed Ricci curvatures.

\subsection{Useful norm identities}

For convenience, we record the following identities. From
\eqref{eq-T-components}, we have
\begin{equation}
\|T\|^{2}
=
\sum_{\alpha=1}^{s}
\sum_{i,j=1}^{r}
(T_{ij}^{\alpha})^{2},
\qquad
\|T^{*}\|^{2}
=
\sum_{\alpha=1}^{s}
\sum_{i,j=1}^{r}
(T_{ij}^{*\alpha})^{2},
\label{eq-T-norm-final}
\end{equation}
while
\begin{equation}
\|\operatorname{trace}T\|^{2}
=
\sum_{\alpha=1}^{s}
\left(
\sum_{i=1}^{r}T_{ii}^{\alpha}
\right)^{2},
\label{eq-traceT-norm}
\end{equation}
and
\begin{equation}
\|\operatorname{trace}T^{*}\|^{2}
=
\sum_{\alpha=1}^{s}
\left(
\sum_{i=1}^{r}T_{ii}^{*\alpha}
\right)^{2}.
\label{eq-traceTstar-norm}
\end{equation}

Similarly,
\begin{equation}
\|\sigma\|^{2}
=
\sum_{\alpha=1}^{r}
\left(
\sum_{i=1}^{s}A_{ii}^{\alpha}
\right)^{2},
\qquad
\|\sigma^{*}\|^{2}
=
\sum_{\alpha=1}^{r}
\left(
\sum_{i=1}^{s}A_{ii}^{*\alpha}
\right)^{2}.
\label{eq-sigma-norm-final}
\end{equation}

All the notation introduced above will be used in the subsequent sections
to derive the Chen--Ricci and Hineva inequalities along the vertical,
horizontal, and mixed distributions.

\subsection{Fix notations}
we fix these notations%
\begin{eqnarray*}
\zeta  &=&\frac{r-1}{2r^{2}}\left( 2\left\Vert {\rm trace}T\right\Vert
^{2}-r\left\Vert T\right\Vert ^{2}-\left( r-2\right) \left\Vert {\rm trace}%
T\right\Vert \sqrt{\frac{r\left\Vert T\right\Vert ^{2}-\left\Vert {\rm trace}%
T\right\Vert ^{2}}{r-1}}\right)  \\
\zeta ^{\ast } &=&\frac{r-1}{2r^{2}}\left( 2\left\Vert {\rm trace}T^{\ast
}\right\Vert ^{2}-r\left\Vert T^{\ast }\right\Vert ^{2}-\left( r-2\right)
\left\Vert {\rm trace}T^{\ast }\right\Vert \sqrt{\frac{r\left\Vert T^{\ast
}\right\Vert ^{2}-\left\Vert {\rm trace}T^{\ast }\right\Vert ^{2}}{r-1}}%
\right) 
\end{eqnarray*}%
\[
\varsigma =\frac{1}{2}\left\Vert \sigma \right\Vert ^{2}-\frac{1}{2}g\left( 
{\cal A},{\cal A}^{\ast }\right) -\left\Vert {\cal A}\right\Vert ^{2}-\delta
\left( N\right) -\delta ^{\ast }\left( N^{\ast }\right) -\delta \left(
\sigma \right) -\delta ^{\ast }\left( \sigma ^{\ast }\right) .
\]

\section{Chen-Ricci inequality for statistical submersion along vertical distribution}

In this section, we give an alternative proof of the Chen–Ricci inequality based on a simple and direct argument

\begin{theorem}\cite{SiddiquiChenSiddiqi2021}
\label{Theorem-CR-vert}Let $F:\left( M,\nabla ,g_{1}\right) \rightarrow
\left( N,\nabla ^{2},g_{2}\right) $ be a statistical submersion between two
statistical manifolds. Then for any unit vector $V_{1}\in {\cal V}_{p}$, we
have%
\begin{eqnarray*}
Ric_{{\cal V}}^{M}\left( V_{1}\right) &\leq &Ric^{{\cal V}}\left(
V_{1}\right) +2Ric^{0M}\left( V_{1}\right) -2Ric^{0{\cal V}}\left(
V_{1}\right) \\
&&+\frac{1}{8}\left( \left\Vert {\rm trace}T\right\Vert ^{2}+\left\Vert {\rm %
trace}T^{\ast }\right\Vert ^{2}\right)
\end{eqnarray*}%
The equality holds if and only if%
\[
{\cal T}_{11}^{\alpha }={\cal T}_{22}^{\alpha }+\cdots +{\cal T}%
_{rr}^{\alpha },\quad {\cal T}_{1j}^{\alpha }=0,\quad j\in \left\{ 2,\ldots
,r\right\} 
\]%
\[
{\cal T}_{11}^{\ast \alpha }={\cal T}_{22}^{\ast \alpha }+\cdots +{\cal T}%
_{rr}^{\ast \alpha },\quad {\cal T}_{1j}^{\ast \alpha }=0,\quad j\in \left\{
2,\ldots ,r\right\} 
\]
\end{theorem}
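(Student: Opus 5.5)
We start from the statistical Gauss equation \eqref{eq-Gauss-stat} with $F_1=F_4=V_1$, $F_2=F_3=V_j$ and sum over $j=2,\dots,r$ (the $j=1$ term vanishes by skew-symmetry). This gives
\[
\operatorname{Ric}_{\mathcal V}^{M}(V_1)=\operatorname{Ric}^{\mathcal V}(V_1)+\sum_{\alpha=1}^{s}\sum_{j=2}^{r}\left(T^{\alpha}_{1j}T^{*\alpha}_{j1}-T^{\alpha}_{jj}T^{*\alpha}_{11}\right),
\]
where $T_{ij}^{\alpha}=T_{ji}^{\alpha}$ because $T_UV=T_VU$ on vertical fields for a torsion-free connection. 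The same identity holds for the Levi--Civita connection, with $T^{0}$ in place of both $T$ and $T^{*}$. Subtracting twice the Levi--Civita version, and using $2T^{0\alpha}_{ij}=T^{\alpha}_{ij}+T^{*\alpha}_{ij}$ from \eqref{eq-T0-components}, the mixed products $T\,T^{*}$ cancel. Indeed,
\[
-2\,T^{0}_{1j}T^{0}_{j1}+T_{1j}T^{*}_{j1}=-\tfrac12\left(T_{1j}^{2}+(T^{*}_{1j})^{2}\right),
\]
and similarly for the diagonal terms. We therefore expect
\[
\operatorname{Ric}_{\mathcal V}^{M}(V_1)=\operatorname{Ric}^{\mathcal V}(V_1)+2\operatorname{Ric}^{0M}(V_1)-2\operatorname{Ric}^{0\mathcal V}(V_1)+\tfrac12\sum_{\alpha}\sum_{j\ge2}\left(T^{\alpha}_{11}T^{\alpha}_{jj}-(T^{\alpha}_{1j})^{2}\right)+\tfrac12\sum_{\alpha}\sum_{j\ge2}\left(T^{*\alpha}_{11}T^{*\alpha}_{jj}-(T^{*\alpha}_{1j})^{2}\right),
\]
with the sign of the $T$-terms fixed by the sign convention of \eqref{eq-Gauss-stat} and the Ricci definitions. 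The main bookkeeping risk is exactly this sign. The target inequality forces the quadratic expression to appear with the sign that makes it bounded above, so we would carefully recheck the sign against the convention used in the O'Neill-type formula for $R^{M}(V_1,V_j,V_j,V_1)$.

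Once the identity is in place, the estimate is purely algebraic and is done separately for each $\alpha$ and for $T$ and $T^{*}$. Write $a=T_{11}^{\alpha}$ and $b=\sum_{j\ge2}T^{\alpha}_{jj}$. Then
\[
ab\le\tfrac14(a+b)^{2}=\tfrac14\left(\operatorname{trace}T^{\alpha}\right)^{2},
\]
with equality if and only if $a=b$. Dropping $-\sum_{j\ge2}(T^{\alpha}_{1j})^{2}\le0$ costs nothing precisely when $T^{\alpha}_{1j}=0$ for $j\ge2$. Summing over $\alpha$ and applying \eqref{eq-traceT-norm} gives
\[
\tfrac12\sum_{\alpha}\left(\cdots\right)\le\tfrac18\|\operatorname{trace}T\|^{2},
\]
and the same argument with \eqref{eq-traceTstar-norm} bounds the $T^{*}$ part by $\tfrac18\|\operatorname{trace}T^{*}\|^{2}$. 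Adding these two bounds yields the stated inequality.

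The equality case follows by tracking when both elementary inequalities are equalities for every $\alpha$, for both $T$ and $T^{*}$. This happens exactly when $T^{\alpha}_{11}=T^{\alpha}_{22}+\dots+T^{\alpha}_{rr}$ and $T^{\alpha}_{1j}=0$ for $j\ge2$, together with the analogous conditions on $T^{*\alpha}$, which are the stated conditions. The main obstacle, as noted, is the careful derivation of the cancellation identity: matching the normalization $j\ge2$ in \eqref{eq-Ric0} with the sum in \eqref{eq-Ric-vert}, and verifying that the cross terms between $T$ and $T^{*}$ really cancel rather than leaving a residual $g_1(T,T^{*})$ term.
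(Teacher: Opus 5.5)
Your strategy is the paper's: Gauss equation \eqref{eq-Gauss-stat}, rewrite via $2T^{0\alpha}_{ij}=T^{\alpha}_{ij}+T^{*\alpha}_{ij}$ to reach \eqref{eq-abc}, then $d_1d_2\le\frac14(d_1+d_2)^2$ and discard $-(T^{\alpha}_{1j})^2$. However, the step you yourself flagged genuinely fails, and the paper's \eqref{eq-abc} has the same omission. The sign is not the problem; the residual is. The off-diagonal part is fine: $T_{1j}T^{*}_{1j}-2(T^{0}_{1j})^2=-\frac12\bigl(T_{1j}^2+(T^{*}_{1j})^2\bigr)$. The diagonal part is not, because \eqref{eq-Gauss-stat} produces $T_{jj}T^{*}_{11}$ and not its symmetrization under $T\leftrightarrow T^{*}$:
\[
-T^{\alpha}_{jj}T^{*\alpha}_{11}+2T^{0\alpha}_{11}T^{0\alpha}_{jj}=\tfrac12T^{\alpha}_{11}T^{\alpha}_{jj}+\tfrac12T^{*\alpha}_{11}T^{*\alpha}_{jj}+\tfrac12\bigl(T^{\alpha}_{11}T^{*\alpha}_{jj}-T^{*\alpha}_{11}T^{\alpha}_{jj}\bigr).
\]
Summing over $j\ge2$ leaves the residual $\frac12\sum_{\alpha}\bigl(T^{\alpha}_{11}\operatorname{trace}T^{*\alpha}-T^{*\alpha}_{11}\operatorname{trace}T^{\alpha}\bigr)$. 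This term has no sign, and the AM--GM step does not control it. It vanishes when $T^{*}=-T$, which is why the paper's examples, all of which have $T^{0}=0$, do not detect it.

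The gap cannot be closed, because with $R^{M}$ the curvature of $\nabla$ alone the inequality is false. Take the Riemannian submersion $x\mapsto|x|$ from $\mathbb R^{3}\setminus\{0\}$ onto $(0,\infty)$, so $r=2$ and $s=1$. Let $h=-\partial_\rho$ and work at a point with $|x|=2$, where $T^{0}_{ij}=\frac12\delta_{ij}$. Put $\nabla=\nabla^{0}+K$, where $C(X,Y,Z)=g_1(K_XY,Z)$ is totally symmetric, has only (vertical, vertical, horizontal) components, and at that point satisfies $g_1(K_{V_1}V_1,h)=\frac12$, $g_1(K_{V_2}V_2,h)=-\frac12$, $g_1(K_{V_1}V_2,h)=0$. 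Then $K_XY=0$ for horizontal $X,Y$, so $F$ is a statistical submersion onto the flat base, and $T=\mathrm{diag}(1,0)$, $T^{*}=\mathrm{diag}(0,1)$.

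By \eqref{eq-Gauss-stat}, $\operatorname{Ric}^{M}_{\mathcal V}(V_1)=\operatorname{Ric}^{\mathcal V}(V_1)$. The claimed bound, however, is $\operatorname{Ric}^{\mathcal V}(V_1)+2\cdot0-2\cdot\frac14+\frac18(1+1)=\operatorname{Ric}^{\mathcal V}(V_1)-\frac14$, which is smaller.

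There are two ways to get a true statement. First, average with the Gauss equation for $\nabla^{*}$, whose extra terms are $T^{*}_{1j}T_{j1}-T^{*}_{jj}T_{11}$. That is, use $\frac12(R^{M}+R^{*M})$ on the left and $\frac12(R^{\mathcal V}+R^{*\mathcal V})$ on the right. The diagonal products then become exactly $\frac12(T_{jj}T^{*}_{11}+T^{*}_{jj}T_{11})$, and your argument goes through verbatim with the stated equality conditions. Second, add a hypothesis that kills the residual, for example $T^{*\alpha}=\lambda_\alpha T^{\alpha}$ for each $\alpha$, or $N=N^{*}=0$.
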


\noindent {\bf Proof.} From (\ref{eq-Gauss-stat}), we obtain%
\begin{equation}
\sum_{j=1}^{r}R^{M}\left( V_{1},V_{j},V_{j},V_{1}\right) =\sum_{j=1}^{r}R^{%
{\cal V}}\left( V_{1},V_{j},V_{j},V_{1}\right) +\sum_{j=1}^{r}g\left(
T_{V_{1}}V_{j},T_{V_{j}}^{\ast }V_{1}\right) -\sum_{j=1}^{r}g\left(
T_{V_{j}}V_{j},T_{V_{1}}^{\ast }V_{1}\right)  \label{eq-Ric-Tij}
\end{equation}%
Using (\ref{eq-T-components}) and (\ref{eq-Ric-vert}) in (\ref{eq-Ric-Tij}), we obtain%
\begin{equation}
Ric_{{\cal V}}^{M}\left( V_{1}\right) =Ric^{{\cal V}}\left( V_{1}\right)
+\sum_{\alpha =1}^{s}\sum_{j=1}^{r}T_{1j}^{\alpha }T_{1j}^{\ast \alpha
}-\sum_{\alpha =1}^{s}\sum_{j=1}^{r}T_{jj}^{\alpha }T_{11}^{\ast \alpha }
\label{eq-Ric-Tij-1}
\end{equation}%
Using (\ref{eq-T0-components}) in (\ref{eq-Ric-Tij-1}), we obtain%
\begin{eqnarray}
Ric_{{\cal V}}^{M}\left( V_{1}\right) &=&Ric^{{\cal V}}\left( V_{1}\right)
+2\sum_{\alpha =1}^{s}\sum_{j=2}^{r}\left( T_{1j}^{0\alpha }\right) ^{2}-%
\frac{1}{2}\sum_{\alpha =1}^{s}\sum_{j=2}^{r}\left( T_{1j}^{\alpha }\right)
^{2}-\frac{1}{2}\sum_{\alpha =1}^{s}\sum_{j=2}^{r}\left( T_{1j}^{\ast \alpha
}\right) ^{2}  \nonumber \\
&&-2\sum_{\alpha =1}^{s}\left( T_{11}^{0\alpha
}\sum_{j=2}^{r}T_{jj}^{0\alpha }\right) +\frac{1}{2}\sum_{\alpha
=1}^{s}\left( T_{11}^{\alpha }\sum_{j=2}^{r}T_{jj}^{\alpha }\right) +\frac{1%
}{2}\sum_{\alpha =1}^{s}\left( T_{11}^{\ast \alpha
}\sum_{j=2}^{r}T_{jj}^{\ast \alpha }\right)  \label{eq-abc}
\end{eqnarray}%
Thus, we obtain%
\begin{eqnarray}
Ric_{{\cal V}}^{M}\left( V_{1}\right) &\leq &Ric^{{\cal V}}\left(
V_{1}\right) +2\sum_{\alpha =1}^{s}\sum_{j=2}^{r}\left( T_{1j}^{0\alpha
}\right) ^{2}-2\sum_{\alpha =1}^{s}\left( T_{11}^{0\alpha
}\sum_{j=2}^{r}T_{jj}^{0\alpha }\right)  \nonumber \\
&&+\frac{1}{2}\sum_{\alpha =1}^{s}\left( T_{11}^{\alpha
}\sum_{j=2}^{r}T_{jj}^{\alpha }\right) +\frac{1}{2}\sum_{\alpha
=1}^{s}\left( T_{11}^{\ast \alpha }\sum_{j=2}^{r}T_{jj}^{\ast \alpha }\right)
\label{eq-CR-ineq}
\end{eqnarray}%
we now use the elementary inequality%
\[
d_{1}d_{2}\leq \frac{\left( d_{1}+d_{2}\right) ^{2}}{4} 
\]%
with equality if and only if $d_{1}=d_{2}$, Suppose $d_{1}=T_{11}^{\alpha }$
and $d_{2}=\sum_{j=2}^{r}T_{jj}^{\alpha }$, and similarly for the dual
components, then from (\ref{eq-CR-ineq}), we obtain%
\begin{eqnarray}
Ric_{{\cal V}}^{M}\left( V_{1}\right) &\leq &Ric^{{\cal V}}\left(
V_{1}\right) +2\sum_{\alpha =1}^{s}\sum_{j=2}^{r}\left( T_{1j}^{0\alpha
}\right) ^{2}-2\sum_{\alpha =1}^{s}\left( T_{11}^{0\alpha
}\sum_{j=2}^{r}T_{jj}^{0\alpha }\right)  \nonumber \\
&&+\frac{1}{8}\sum_{\alpha =1}^{s}\left( T_{11}^{\alpha
}+\sum_{j=2}^{r}T_{jj}^{\alpha }\right) ^{2}+\frac{1}{8}\sum_{\alpha
=1}^{s}\left( T_{11}^{\ast \alpha }+\sum_{j=2}^{r}T_{jj}^{\ast \alpha
}\right) ^{2}  \label{eq-CR-ineq-1}
\end{eqnarray}%
Using (\ref{eq-trace-T}) in (\ref{eq-CR-ineq-1}), we obtain%
\begin{eqnarray}
Ric_{{\cal V}}^{M}\left( V_{1}\right) &\leq &Ric^{{\cal V}}\left(
V_{1}\right) +2\sum_{\alpha =1}^{s}\sum_{j=2}^{r}\left( T_{1j}^{0\alpha
}\right) ^{2}-2\sum_{\alpha =1}^{s}\left( T_{11}^{0\alpha
}\sum_{j=2}^{r}T_{jj}^{0\alpha }\right)  \nonumber \\
&&+\frac{1}{8}\left( \left\Vert {\rm trace}T\right\Vert ^{2}+\left\Vert {\rm %
trace}T^{\ast }\right\Vert ^{2}\right)  \label{eq-CR-ineq-2}
\end{eqnarray}%
From (\ref{eq-Gauss-stat}), we obtain%
\begin{equation}
\sum_{j=2}^{r}R^{M}\left( V_{1},V_{j},V_{j},V_{1}\right) =\sum_{j=2}^{r}R^{%
{\cal V}}\left( V_{1},V_{j},V_{j},V_{1}\right) +\sum_{\alpha
=1}^{s}\sum_{j=2}^{r}\left( T_{1j}^{0\alpha }\right) ^{2}-\sum_{\alpha
=r+1}^{s}\sum_{j=2}^{r}T_{11}^{0\alpha }T_{jj}^{0\alpha }
\label{eq-CR-ineq-3}
\end{equation}%
Using (\ref{eq-Ric0}) in (\ref{eq-CR-ineq-3}), we obtain%
\begin{equation}
2Ric^{0M}\left( V_{1}\right) -2Ric^{0{\cal V}}\left( V_{1}\right)
=2\sum_{\alpha =1}^{s}\sum_{j=2}^{r}\left( T_{1j}^{0\alpha }\right)
^{2}-2\sum_{\alpha =1}^{s}\left( T_{11}^{0\alpha
}\sum_{j=2}^{r}T_{jj}^{0\alpha }\right)  \label{eq-CR-ineq-4}
\end{equation}%
From (\ref{eq-CR-ineq-2}) and (\ref{eq-CR-ineq-4}), we obtain%
\begin{eqnarray}
Ric_{{\cal V}}^{M}\left( V_{1}\right) &\leq &Ric^{{\cal V}}\left(
V_{1}\right) +2Ric^{0M}\left( V_{1}\right) -2Ric^{0{\cal V}}\left(
V_{1}\right)  \nonumber \\
&&+\frac{1}{8}\left( \left\Vert {\rm trace}T\right\Vert ^{2}+\left\Vert {\rm %
trace}T^{\ast }\right\Vert ^{2}\right)  \label{eq-CR-ineq-5}
\end{eqnarray}%
The equality holds in (\ref{eq-CR-ineq-5}) if and only if equality holds in (%
\ref{eq-CR-ineq-1}), for each $\alpha \in \left\{ 1,\ldots ,s\right\} $, we
obtain%
\begin{eqnarray}
{\cal T}_{11}^{\alpha } &=&{\cal T}_{22}^{\alpha }+\cdots +{\cal T}%
_{rr}^{\alpha },\quad \quad {\cal T}_{1j}^{\alpha }=0,\quad j\in \{2,\ldots
,r\}  \nonumber \\
{\cal T}_{11}^{\ast \alpha } &=&{\cal T}_{22}^{\ast \alpha }+\cdots +{\cal T}%
_{rr}^{\ast \alpha },\quad \quad {\cal T}_{1j}^{\ast \alpha }=0,\quad j\in
\{2,\ldots ,r\}  \label{eq-RM-6}
\end{eqnarray}%
Assuming that equality holds for all unit vector field $h_{i}$, $i\in
\{1,\ldots ,r\}$. Then from (\ref{eq-RM-6}), for each $\alpha \in \left\{
1,\ldots ,s\right\} $, we obtain%
\begin{equation}
{\cal T}_{ij}^{\alpha }=0,\quad {\cal T}_{ij}^{\ast \alpha }=0,\quad i\neq j,
\label{eq-RM-7}
\end{equation}%
\begin{eqnarray}
2{\cal T}_{ii}^{\alpha } &=&{\cal T}_{11}^{\alpha }+{\cal T}_{22}^{\alpha
}+\cdots +{\cal T}_{rr}^{\alpha },\quad i\in \{1,\ldots r\},  \nonumber \\
2{\cal T}_{ii}^{\ast \alpha } &=&{\cal T}_{11}^{\ast \alpha }+{\cal T}%
_{22}^{\ast \alpha }+\cdots +{\cal T}_{rr}^{\ast \alpha },\quad i\in
\{1,\ldots r\}.  \label{eq-RM-8}
\end{eqnarray}%
From (\ref{eq-RM-8}), we obtain%
\begin{eqnarray}
\left( r-2\right) ({\cal T}_{11}^{\alpha }+{\cal T}_{22}^{\alpha }+\cdots +%
{\cal T}_{rr}^{\alpha }) &=&0,  \nonumber \\
\left( r-2\right) ({\cal T}_{11}^{\ast \alpha }+{\cal T}_{22}^{\ast \alpha
}+\cdots +{\cal T}_{rr}^{\ast \alpha }) &=&0,  \label{eq-RM-9}
\end{eqnarray}%
From (\ref{eq-RM-9}), we obtain either $r=2$ or ${\cal T}_{11}^{\alpha }+%
{\cal T}_{22}^{\alpha }+\cdots +{\cal T}_{rr}^{\alpha }=0$ and ${\cal T}%
_{11}^{\ast \alpha }+{\cal T}_{22}^{\ast \alpha }+\cdots +{\cal T}%
_{rr}^{\ast \alpha }=0$. If ${\cal T}_{11}^{\alpha }+{\cal T}_{22}^{\alpha
}+\cdots +{\cal T}_{rr}^{\alpha }=0$ and ${\cal T}_{11}^{\ast \alpha }+{\cal %
T}_{22}^{\ast \alpha }+\cdots +{\cal T}_{rr}^{\ast \alpha }=0$, then from (%
\ref{eq-RM-8}), we obtain ${\cal T}_{ii}^{\alpha }=0$ and ${\cal T}%
_{ii}^{\ast \alpha }=0$ for all $\alpha \in \left\{ 1,\ldots ,s\right\} $
and $i\in \{1,\ldots ,r\}$. In view of (\ref{eq-RM-7}), we obtain ${\cal T}%
_{ij}^{\alpha }=0$ and ${\cal T}_{ij}^{\ast \alpha }=0$, for all $\alpha \in
\left\{ 1,\ldots ,s\right\} $ and $i,j\in \{1,\ldots ,r\}$. Which shows
fibre of $F$ is totally geodesic. If $r=2$, then from (\ref{eq-RM-7}) and (%
\ref{eq-RM-8}), for each $\alpha \in \left\{ 1,\ldots ,s\right\} $, we
obtain 
\begin{equation}
{\cal T}_{11}^{\alpha }={\cal T}_{22}^{\alpha },\quad {\cal T}_{12}^{\alpha
}=0,\quad {\cal T}_{11}^{\ast \alpha }={\cal T}_{22}^{\ast \alpha },\quad 
{\cal T}_{12}^{\ast \alpha }=0.  \label{eq-RM-10}
\end{equation}%
From (\ref{eq-RM-10}), it is straightforward that the fibre of $F$ is
totally umbilical.

\subsection{Hineva inequality for statistical submersion along vertical distribution}

In this section we obtain Hineva inequality

\begin{theorem}
\label{Theorem-Hineva-vert}Let $F:\left( M,\nabla ,g_{1}\right) \rightarrow
\left( N,\nabla ^{2},g_{2}\right) $ be a statistical submersion between two
statistical manifolds. Then for any unit vector $V_{1}\in {\cal V}_{p}$, we
have%
\begin{eqnarray*}
Ric_{{\cal V}}^{M}\left( V_{1}\right) &\geq &Ric^{{\cal V}}\left(
V_{1}\right) +2Ric^{0M}\left( V_{1}\right) -2Ric^{0{\cal V}}\left(
V_{1}\right) +\frac{r-1}{2r^{2}}\left( 2\left\Vert {\rm trace}T\right\Vert
^{2}-r\left\Vert T\right\Vert ^{2}\right. \\
&&\left. -\left( r-2\right) \left\Vert {\rm trace}T\right\Vert \sqrt{\frac{%
r\left\Vert T\right\Vert ^{2}-\left\Vert {\rm trace}T\right\Vert ^{2}}{r-1}}%
\right) +\frac{r-1}{2r^{2}}\left( 2\left\Vert {\rm trace}T^{\ast
}\right\Vert ^{2}-r\left\Vert T^{\ast }\right\Vert ^{2}\right. \\
&&\left. -\left( r-2\right) \left\Vert {\rm trace}T^{\ast }\right\Vert \sqrt{%
\frac{r\left\Vert T^{\ast }\right\Vert ^{2}-\left\Vert {\rm trace}T^{\ast
}\right\Vert ^{2}}{r-1}}\right) .
\end{eqnarray*}%
The equality holds if and only if for each $\alpha \in \{1,\ldots ,s\}$, the
matrix $(T_{ij}^{\alpha })$ and $(T_{ij}^{\ast \alpha })$ take the following
forms 
\[
T_{ij}^{\alpha }={\rm diag}(a_{\alpha },b_{\alpha },\ldots ,b_{\alpha
})\quad {\rm and\quad }T_{ij}^{\ast \alpha }={\rm diag}(a_{\alpha }^{\ast
},b_{\alpha }^{\ast },\ldots ,b_{\alpha }^{\ast }) 
\]%
and%
\[
\beta _{1}|a_{\alpha }-b_{\alpha }|+\beta _{2}\left( a_{\alpha
}+(r-1)b_{\alpha }\right) =0,\quad \alpha \in \{1,\ldots ,s\}. 
\]%
\[
\beta _{1}|a_{\alpha }^{\ast }-b_{\alpha }^{\ast }|+\beta _{2}\left(
a_{\alpha }^{\ast }+(r-1)b_{\alpha }^{\ast }\right) =0,\quad \alpha \in
\{1,\ldots ,s\}. 
\]%
where $\beta _{1}$ and $\beta _{2}$ are scalars but not both zero, $%
a_{\alpha }$, $b_{\alpha }$ and $a_{\alpha }^{\ast }$, $b_{\alpha }^{\ast }$
are two eigen values of each matrices $(B_{ij}^{\alpha })$ and $%
(B_{ij}^{\ast \alpha })$, respectively, such that%
\begin{eqnarray*}
a_{\alpha } &=&\frac{{\rm trace}\left( {\cal T}^{\alpha }\right) }{r}\mp 
\frac{r-1}{r}\sqrt{\frac{r{\rm trace}\left( {\cal T}^{\alpha }\right)
^{2}-\left( {\rm trace}{\cal T}^{\alpha }\right) ^{2}}{r-1}} \\
b_{\alpha } &=&\frac{{\rm trace}\left( {\cal T}^{\alpha }\right) }{r}\pm 
\frac{1}{r}\sqrt{\frac{r{\rm trace}\left( {\cal T}^{\alpha }\right)
^{2}-\left( {\rm trace}{\cal T}^{\alpha }\right) ^{2}}{r-1}}
\end{eqnarray*}%
and%
\begin{eqnarray*}
a_{\alpha }^{\ast } &=&\frac{{\rm trace}\left( {\cal T}^{\ast \alpha
}\right) }{r}\mp \frac{r-1}{r}\sqrt{\frac{r{\rm trace}\left( {\cal T}^{\ast
\alpha }\right) ^{2}-\left( {\rm trace}{\cal T}^{\ast \alpha }\right) ^{2}}{%
r-1}} \\
b_{\alpha }^{\ast } &=&\frac{{\rm trace}\left( {\cal T}^{\ast \alpha
}\right) }{r}\pm \frac{1}{r}\sqrt{\frac{r{\rm trace}\left( {\cal T}^{\ast
\alpha }\right) ^{2}-\left( {\rm trace}{\cal T}^{\ast \alpha }\right) ^{2}}{%
r-1}}
\end{eqnarray*}
\end{theorem}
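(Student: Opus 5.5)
We reuse the exact identity (\ref{eq-abc}) from the proof of Theorem~\ref{Theorem-CR-vert} and bound it from below, where the Chen--Ricci proof bounded it from above. Combined with (\ref{eq-CR-ineq-4}), which expresses the $T^{0}$-terms as $2Ric^{0M}(V_1)-2Ric^{0{\cal V}}(V_1)$, identity (\ref{eq-abc}) becomes
\[
Ric_{{\cal V}}^{M}(V_1)=Ric^{{\cal V}}(V_1)+2Ric^{0M}(V_1)-2Ric^{0{\cal V}}(V_1)+\frac12\sum_{\alpha=1}^{s}\Phi(T^{\alpha})+\frac12\sum_{\alpha=1}^{s}\Phi(T^{*\alpha}).
\]
Here, for a symmetric $r\times r$ matrix $B$,
\[
\Phi(B)=b_{11}\sum_{j=2}^{r}b_{jj}-\sum_{j=2}^{r}b_{1j}^{2}=b_{11}\sum_{i=1}^{r}b_{ii}-\sum_{i=1}^{r}b_{1i}^{2},
\]
since the $i=1$ terms $b_{11}^2$ cancel. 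The whole statement therefore reduces to a lower bound for $\sum_\alpha\Phi(T^\alpha)$ and for its dual analogue.

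For each $\alpha$ we apply Lemma~\ref{Hineva} with $k=r$ to the symmetric matrix $B=(T^{\alpha}_{ij})$, which is symmetric because $T_UV=T_VU$. Writing $b_\alpha=\operatorname{trace}(T^\alpha)$ and $a_\alpha=\|T^\alpha\|^2$, this gives
\[
\Phi(T^\alpha)\ge \frac{r-1}{r^2}\Bigl[2b_\alpha^2-ra_\alpha-(r-2)|b_\alpha|\sqrt{\tfrac{ra_\alpha-b_\alpha^2}{r-1}}\Bigr].
\]
Summing over $\alpha$, the first two terms give exactly $2\|\operatorname{trace}T\|^2-r\|T\|^2$ by (\ref{eq-T-norm-final}) and (\ref{eq-traceT-norm}).

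The cross terms have a minus sign, so for them we need an upper bound. By Cauchy--Schwarz,
\[
\sum_\alpha |b_\alpha|\sqrt{\tfrac{ra_\alpha-b_\alpha^2}{r-1}}\le \Bigl(\sum_\alpha b_\alpha^2\Bigr)^{1/2}\Bigl(\sum_\alpha \tfrac{ra_\alpha-b_\alpha^2}{r-1}\Bigr)^{1/2}=\|\operatorname{trace}T\|\sqrt{\tfrac{r\|T\|^2-\|\operatorname{trace}T\|^2}{r-1}}.
\]
All radicands are nonnegative since $b_\alpha^2\le r a_\alpha$. Multiplying by $\tfrac12$ yields $\tfrac12\sum_\alpha\Phi(T^\alpha)\ge\zeta$. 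Repeating the argument verbatim for $T^{*\alpha}$ yields $\tfrac12\sum_\alpha\Phi(T^{*\alpha})\ge\zeta^{*}$, and substituting both bounds into the identity proves the inequality.

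Equality in the final estimate requires equality in two places. First, equality in Lemma~\ref{Hineva} must hold for each $\alpha$. This forces $T^\alpha=\operatorname{diag}(a_\alpha,b_\alpha,\ldots,b_\alpha)$ with $a_\alpha,b_\alpha$ given by (\ref{eq-Hineva-b1})--(\ref{eq-Hineva-b2}), and likewise for $T^{*\alpha}$.

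Second, equality must hold in Cauchy--Schwarz. This means the vectors $\bigl(|b_\alpha|\bigr)_\alpha$ and $\bigl(\sqrt{(ra_\alpha-b_\alpha^2)/(r-1)}\bigr)_\alpha$ are linearly dependent, i.e. there are scalars $\beta_1,\beta_2$, not both zero, with $\beta_1\sqrt{\cdot}+\beta_2|b_\alpha|=0$ for all $\alpha$. For the diagonal form one computes $\sqrt{(ra_\alpha-b_\alpha^2)/(r-1)}=|a_\alpha-b_\alpha|$ (in the eigenvalue notation) and $b_\alpha=a_\alpha+(r-1)b_\alpha$. So the proportionality becomes the stated relation $\beta_1|a_\alpha-b_\alpha|+\beta_2(a_\alpha+(r-1)b_\alpha)=0$, up to absorbing the absolute value of the trace into the sign of $\beta_2$, and the same holds for the dual matrices.

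The main obstacle is this equality bookkeeping: the sign of the trace versus $|b|$, and the need for a single pair $(\beta_1,\beta_2)$ that works simultaneously for all $\alpha$. A secondary check is that the cancellation making $\Phi$ match the left side of Lemma~\ref{Hineva} is consistent with the signs in (\ref{eq-abc}).
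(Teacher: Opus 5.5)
Your argument follows the paper's proof step for step. It combines \eqref{eq-abc} with \eqref{eq-CR-ineq-4}, applies Lemma~\ref{Hineva} with $k=r$ for each $\alpha$, then uses Cauchy--Schwarz over $\alpha$ and the same equality bookkeeping. Your remark that the $b_{11}^{2}$ terms cancel is exactly what the paper uses tacitly in \eqref{eq-stat-hin-1}. Granting \eqref{eq-abc}, the inequality is correctly derived.

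\textbf{The equality case.} The step you flagged as the main obstacle does not go through. Equality in Cauchy--Schwarz gives $\beta_1|a_\alpha-b_\alpha|+\beta_2|a_\alpha+(r-1)b_\alpha|=0$ for all $\alpha$. The absolute value cannot be absorbed into a single $\beta_2$ once two traces have opposite signs. Take $r=3$, $s=2$, $T^{*}=0$, $(T^{1}_{ij})=\operatorname{diag}(-1,2,2)$ and $(T^{2}_{ij})=-(T^{1}_{ij})$:
\begin{itemize}
\item both sides of Lemma~\ref{Hineva} equal $-4$ for each matrix;
\item both Cauchy--Schwarz vectors are $(3,3)$;
\item $\tfrac12\sum_\alpha\Phi(T^\alpha)=-4=\zeta$ and $\zeta^{*}=0$, so equality holds.
\end{itemize}
Yet the unsigned relation reads $3\beta_1+3\beta_2=0=3\beta_1-3\beta_2$, which forces $\beta_1=\beta_2=0$. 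So the correct condition must keep $|a_\alpha+(r-1)b_\alpha|$. Also, the $T$- and $T^{*}$-conditions come with separate pairs $(\beta_1,\beta_2)$, not one shared pair. The paper's proof makes the same unjustified jump.

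Two further points in your equality analysis:
\begin{itemize}
\item For $r=2$, every symmetric $2\times2$ matrix satisfies $\Phi(B)=\det B=\tfrac12\bigl((\operatorname{trace}B)^{2}-\|B\|^{2}\bigr)$, and the Cauchy--Schwarz term carries the factor $r-2=0$. So equality always holds and nothing forces a diagonal form; e.g.\ $T^{*}=0$, $(T^{1}_{ij})=\bigl(\begin{smallmatrix}0&1\\1&0\end{smallmatrix}\bigr)$ attains it. Your ``equality requires equality in two places'' is valid only for $r\ge3$.
\item You do not address the converse, and for it the sign in $\mp$ must be $-\operatorname{sgn}(\operatorname{trace}T^{\alpha})$. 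With $r=3$, $s=1$, $T^{*}=0$, the matrix $(T^{1}_{ij})=\operatorname{diag}(2,1,1)$ has the stated form with the other sign. It satisfies the $\beta$-relation with $(\beta_1,\beta_2)=(4,-1)$, yet $\Phi(T^{1})=4>\tfrac{20}{9}$.
\end{itemize}

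\textbf{The identity \eqref{eq-abc}.} Your reduction inherits \eqref{eq-abc} from the paper, and that identity does not follow from \eqref{eq-Ric-Tij-1}. After the $j=1$ terms cancel, \eqref{eq-Ric-Tij-1} contains $-T^{*\alpha}_{11}\sum_{j\ge2}T^{\alpha}_{jj}$. By contrast, \eqref{eq-abc} contains the symmetrization $-\tfrac12\bigl(T^{\alpha}_{11}\sum_{j\ge2}T^{*\alpha}_{jj}+T^{*\alpha}_{11}\sum_{j\ge2}T^{\alpha}_{jj}\bigr)$. The two agree in two cases but not in general:
\begin{itemize}
\item when $T^{*}=cT$, as in the paper's examples;
\item for the averaged curvature $\tfrac12(\operatorname{Ric}+\operatorname{Ric}^{*})$ obtained from the Gauss equations of $\nabla$ and $\nabla^{*}$.
\end{itemize}
For a counterexample, take $r=2$, $s=1$, $(T^{1}_{ij})=\operatorname{diag}(0,1)$, $(T^{*1}_{ij})=\operatorname{diag}(1,0)$. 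This is realizable pointwise, since $T=T^{0}+\mathcal H K$ and $T^{*}=T^{0}-\mathcal H K$ on vertical pairs. Then \eqref{eq-Ric-Tij-1} and \eqref{eq-CR-ineq-4} give $\operatorname{Ric}^{M}_{\mathcal V}(V_1)-\operatorname{Ric}^{\mathcal V}(V_1)-2\operatorname{Ric}^{0M}(V_1)+2\operatorname{Ric}^{0\mathcal V}(V_1)=-1+\tfrac12=-\tfrac12<0=\zeta+\zeta^{*}$.

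This is not a flaw of your reasoning relative to the paper. It does mean neither argument proves the inequality for the curvature defined in \eqref{eq-Ric-vert} without an extra hypothesis of this kind. The equality-case counterexamples above all have $T^{*}=0$, so they are unaffected by this issue.
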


\noindent {\bf Proof.} From (\ref{eq-abc}), we have%
\begin{eqnarray}
Ric_{{\cal V}}^{M}\left( V_{1}\right) &=&Ric^{{\cal V}}\left( V_{1}\right)
+2\sum_{\alpha =1}^{s}\sum_{j=2}^{r}\left( T_{1j}^{0\alpha }\right)
^{2}-2\sum_{\alpha =1}^{s}\left( T_{11}^{0\alpha
}\sum_{j=2}^{r}T_{jj}^{0\alpha }\right)  \nonumber \\
&&+\frac{1}{2}\sum_{\alpha =1}^{s}\left( T_{11}^{\alpha
}\sum_{j=2}^{r}T_{jj}^{\alpha }\right) -\frac{1}{2}\sum_{\alpha
=1}^{s}\sum_{j=2}^{r}\left( T_{1j}^{\alpha }\right) ^{2}  \nonumber \\
&&+\frac{1}{2}\sum_{\alpha =1}^{s}\left( T_{11}^{\ast \alpha
}\sum_{j=2}^{r}T_{jj}^{\ast \alpha }\right) -\frac{1}{2}\sum_{\alpha
=1}^{s}\sum_{j=2}^{r}\left( T_{1j}^{\ast \alpha }\right) ^{2}.
\label{eq-stat-hin-1}
\end{eqnarray}%
Using Lemma \ref{Hineva}, we obtain%
\begin{eqnarray}
T_{11}^{\alpha }\sum_{j=1}^{r}T_{jj}^{\alpha }-\sum_{j=1}^{r}\left(
T_{1j}^{\alpha }\right) ^{2} &\geq &\frac{r-1}{r^{2}}\left( 2\left(
\sum_{j=1}^{r}T_{jj}^{\alpha }\right) ^{2}-r\sum_{i,j=1}^{r}\left(
T_{ij}^{\alpha }\right) ^{2}-\left( r-2\right) \right.  \nonumber \\
&&\left. \left\vert \sum_{j=1}^{r}T_{jj}^{\alpha }\right\vert \sqrt{\frac{%
r\sum_{i,j=1}^{r}\left( T_{ij}^{\alpha }\right) ^{2}-\left(
\sum_{j=1}^{r}T_{jj}^{\alpha }\right) ^{2}}{r-1}}\right)
\label{eq-Lemma-Hineva-RM}
\end{eqnarray}%
and%
\begin{eqnarray}
T_{11}^{\ast \alpha }\sum_{j=1}^{r}T_{jj}^{\ast \alpha
}-\sum_{j=1}^{r}\left( T_{1j}^{\ast \alpha }\right) ^{2} &\geq &\frac{r-1}{%
r^{2}}\left( 2\left( \sum_{j=1}^{r}T_{jj}^{\ast \alpha }\right)
^{2}-r\sum_{i,j=1}^{r}\left( T_{ij}^{\ast \alpha }\right) ^{2}-\left(
r-2\right) \right.  \nonumber \\
&&\left. \left\vert \sum_{j=1}^{r}T_{jj}^{\ast \alpha }\right\vert \sqrt{%
\frac{r\sum_{i,j=1}^{r}\left( T_{ij}^{\ast \alpha }\right) ^{2}-\left(
\sum_{j=1}^{r}T_{jj}^{\ast \alpha }\right) ^{2}}{r-1}}\right)
\label{eq-Lemma-Hineva-RM-1}
\end{eqnarray}%
Using (\ref{eq-Lemma-Hineva-RM}) and (\ref{eq-Lemma-Hineva-RM-1}) in (\ref%
{eq-stat-hin-1}), we obtain%
\begin{eqnarray}
Ric_{{\cal V}}^{M}\left( V_{1}\right) &\geq &Ric^{{\cal V}}\left(
V_{1}\right) +2\sum_{\alpha =1}^{s}\sum_{j=2}^{r}\left( T_{1j}^{0\alpha
}\right) ^{2}-2\sum_{\alpha =1}^{s}\left( {\cal T}_{11}^{0\alpha
}\sum_{j=2}^{r}T_{jj}^{0\alpha }\right) +\frac{r-1}{2r^{2}}\left\{
2\sum_{\alpha =1}^{s}\left( \sum_{j=1}^{r}T_{jj}^{\alpha }\right) ^{2}\right.
\nonumber \\
&&\left. -r\sum_{\alpha =1}^{s}\sum_{i,j=1}^{r}\left( T_{ij}^{\alpha
}\right) ^{2}-\left( r-2\right) \sum_{\alpha =1}^{s}\left\vert
\sum_{j=1}^{r}T_{jj}^{\alpha }\right\vert \sqrt{\frac{r\sum_{i,j=1}^{r}%
\left( T_{ij}^{\alpha }\right) ^{2}-\left( \sum_{j=1}^{r}T_{jj}^{\alpha
}\right) ^{2}}{r-1}}\right\}  \nonumber \\
&&+\frac{r-1}{2r^{2}}\left\{ 2\sum_{\alpha =1}^{s}\left(
\sum_{j=1}^{r}T_{jj}^{\ast \alpha }\right) ^{2}-r\sum_{\alpha
=1}^{s}\sum_{i,j=1}^{r}\left( T_{ij}^{\ast \alpha }\right) ^{2}-\left(
r-2\right) \right.  \nonumber \\
&&\left. \sum_{\alpha =1}^{s}\left\vert \sum_{j=1}^{r}T_{jj}^{\ast \alpha
}\right\vert \sqrt{\frac{r\sum_{i,j=1}^{r}\left( T_{ij}^{\ast \alpha
}\right) ^{2}-\left( \sum_{j=1}^{r}T_{jj}^{\ast \alpha }\right) ^{2}}{r-1}}%
\right\}  \label{eq-Ric-Hineva-RM-2}
\end{eqnarray}%
Using Cauchy-Schwartz inequality, we have%
\begin{equation}
\sum_{\alpha =1}^{s}\left\vert \sum_{j=1}^{r}T_{jj}^{\alpha }\right\vert 
\sqrt{\frac{r\sum_{i,j=1}^{r}\left( T_{ij}^{\alpha }\right) ^{2}-\left(
\sum_{j=1}^{r}T_{jj}^{\alpha }\right) ^{2}}{r-1}}\leq \left\Vert {\rm trace}%
T\right\Vert \sqrt{\frac{r\left\Vert T\right\Vert ^{2}-\left\Vert {\rm trace}%
T\right\Vert ^{2}}{r-1}},  \label{eq-cauch-squartz-Hineva-RM}
\end{equation}%
and%
\begin{equation}
\sum_{\alpha =1}^{s}\left\vert \sum_{j=1}^{r}T_{jj}^{\ast \alpha
}\right\vert \sqrt{\frac{r\sum_{i,j=1}^{r}\left( T_{ij}^{\ast \alpha
}\right) ^{2}-\left( \sum_{j=1}^{r}T_{jj}^{\ast \alpha }\right) ^{2}}{r-1}}%
\leq \left\Vert {\rm trace}T^{\ast }\right\Vert \sqrt{\frac{r\left\Vert
T^{\ast }\right\Vert ^{2}-\left\Vert {\rm trace}T^{\ast }\right\Vert ^{2}}{%
r-1}}.  \label{eq-cauch-squartz-Hineva-RM-1}
\end{equation}%
Using (\ref{eq-cauch-squartz-Hineva-RM}) and (\ref%
{eq-cauch-squartz-Hineva-RM-1}) in (\ref{eq-Ric-Hineva-RM-2}), we obtain%
\begin{eqnarray}
Ric_{{\cal V}}^{M}\left( V_{1}\right) &\geq &Ric^{{\cal V}}\left(
V_{1}\right) +2\sum_{\alpha =1}^{s}\sum_{j=2}^{r}\left( T_{1j}^{0\alpha
}\right) ^{2}-2\sum_{\alpha =1}^{s}\left( T_{11}^{0\alpha
}\sum_{j=2}^{r}T_{jj}^{0\alpha }\right) +\frac{r-1}{2r^{2}}\left(
2\left\Vert {\rm trace}T\right\Vert ^{2}\right.  \nonumber \\
&&\left. -r\left\Vert T\right\Vert ^{2}-\left( r-2\right) \left\Vert {\rm %
trace}T\right\Vert \sqrt{\frac{r\left\Vert T\right\Vert ^{2}-\left\Vert {\rm %
trace}T\right\Vert ^{2}}{r-1}}\right) +\frac{r-1}{2r^{2}}\left( 2\left\Vert 
{\rm trace}T^{\ast }\right\Vert ^{2}\right.  \nonumber \\
&&\left. -r\left\Vert T^{\ast }\right\Vert ^{2}-\left( r-2\right) \left\Vert 
{\rm trace}T^{\ast }\right\Vert \sqrt{\frac{r\left\Vert T^{\ast }\right\Vert
^{2}-\left\Vert {\rm trace}T^{\ast }\right\Vert ^{2}}{r-1}}\right) .
\label{eq-Ric-Hineva-RM-3}
\end{eqnarray}%
From (\ref{eq-CR-ineq-4}), we have%
\begin{equation}
2Ric^{0M}\left( V_{1}\right) -2Ric^{0{\cal V}}\left( V_{1}\right)
=2\sum_{\alpha =1}^{s}\sum_{j=2}^{r}\left( T_{1j}^{0\alpha }\right)
^{2}-2\sum_{\alpha =1}^{s}\left( T_{11}^{0\alpha
}\sum_{j=2}^{r}T_{jj}^{0\alpha }\right)  \label{eq-Ric-Hineva-RM-4}
\end{equation}%
Using (\ref{eq-Ric-Hineva-RM-4}) in (\ref{eq-Ric-Hineva-RM-3}), we obtain%
\begin{eqnarray}
Ric_{{\cal V}}^{M}\left( V_{1}\right) &\geq &Ric^{{\cal V}}\left(
V_{1}\right) +2Ric^{0M}\left( V_{1}\right) -2Ric^{0{\cal V}}\left(
V_{1}\right) +\frac{r-1}{2r^{2}}\left( 2\left\Vert {\rm trace}T\right\Vert
^{2}-r\left\Vert T\right\Vert ^{2}\right.  \nonumber \\
&&\left. -\left( r-2\right) \left\Vert {\rm trace}T\right\Vert \sqrt{\frac{%
r\left\Vert T\right\Vert ^{2}-\left\Vert {\rm trace}T\right\Vert ^{2}}{r-1}}%
\right) +\frac{r-1}{2r^{2}}\left( 2\left\Vert {\rm trace}T^{\ast
}\right\Vert ^{2}-r\left\Vert T^{\ast }\right\Vert ^{2}\right.  \nonumber \\
&&\left. -\left( r-2\right) \left\Vert {\rm trace}T^{\ast }\right\Vert \sqrt{%
\frac{r\left\Vert T^{\ast }\right\Vert ^{2}-\left\Vert {\rm trace}T^{\ast
}\right\Vert ^{2}}{r-1}}\right) .  \label{eq-Ric-Hineva-RM-5}
\end{eqnarray}%
Now, we establish the equality case. Equality in the desired inequality is
attained if and only if equality holds simultaneously in (\ref%
{eq-Lemma-Hineva-RM}), (\ref{eq-Lemma-Hineva-RM-1}), (\ref%
{eq-cauch-squartz-Hineva-RM}) and (\ref{eq-cauch-squartz-Hineva-RM-1}). If
equality holds in (\ref{eq-Lemma-Hineva-RM}) and (\ref{eq-Lemma-Hineva-RM-1}%
), then, for each $\alpha \in \{1,\ldots ,s\}$, the matrix $(T_{ij}^{\alpha
})$ and $(T_{ij}^{\ast \alpha })$ have the forms 
\[
T_{ij}^{\alpha }={\rm diag}(a_{\alpha },b_{\alpha },\ldots ,b_{\alpha
})\quad {\rm and\quad }T_{ij}^{\ast \alpha }={\rm diag}(a_{\alpha }^{\ast
},b_{\alpha }^{\ast },\ldots ,b_{\alpha }^{\ast }) 
\]%
On the other hand, equality in (\ref{eq-cauch-squartz-Hineva-RM}) and (\ref%
{eq-cauch-squartz-Hineva-RM-1}) hold if and only if the vectors involved in
the Cauchy--Schwarz inequality are linearly dependent. Consequently, there
exist constants $\beta _{1},\beta _{2}$, not both zero, such that 
\[
\beta _{1}\left\vert \sum_{j=1}^{r}T_{jj}^{\alpha }\right\vert +\beta _{2}%
\sqrt{\frac{r\sum_{i,j=1}^{r}(T_{ij}^{\alpha })^{2}-\left(
\sum_{j=1}^{r}T_{jj}^{\alpha }\right) ^{2}}{r-1}}=0,\quad \alpha \in
\{1,\ldots ,s\}. 
\]%
and%
\[
\beta _{1}\left\vert \sum_{j=1}^{r}T_{jj}^{\ast \alpha }\right\vert +\beta
_{2}\sqrt{\frac{r\sum_{i,j=1}^{r}(T_{ij}^{\ast \alpha })^{2}-\left(
\sum_{j=1}^{r}T_{jj}^{\ast \alpha }\right) ^{2}}{r-1}}=0,\quad \alpha \in
\{1,\ldots ,s\}. 
\]%
which implies 
\[
\beta _{1}|a_{\alpha }-b_{\alpha }|+\beta _{2}\left( a_{\alpha
}+(r-1)b_{\alpha }\right) =0,\quad \alpha \in \{1,\ldots ,s\}. 
\]%
and%
\[
\beta _{1}|a_{\alpha }^{\ast }-b_{\alpha }^{\ast }|+\beta _{2}\left(
a_{\alpha }^{\ast }+(r-1)b_{\alpha }^{\ast }\right) =0,\quad \alpha \in
\{1,\ldots ,s\}. 
\]%
This completes the proof.

\subsection{Simultaneous Chen--Ricci and Hineva inequalities for statistical
submersion along vertical distyribution}

In this subsection, we combine Theorems~\ref{Theorem-CR-vert} and~\ref%
{Theorem-Hineva-vert} to characterize the simultaneous equality cases of the
obtained Ricci curvature inequalities.

\begin{theorem}
\label{Th-ineq-both-side-vert}Let $F:(N_{1},g_{1})\rightarrow (N_{2},g_{2})$
be a Riemannian submersion between two Riemannian manifolds. Then%
\begin{eqnarray}
&&Ric_{{\cal V}}^{M}\left( V_{1}\right) -2Ric^{0M}\left( V_{1}\right)
+2Ric^{0{\cal V}}\left( V_{1}\right) -+\frac{1}{8}\left( \left\Vert {\rm %
trace}T\right\Vert ^{2}+\left\Vert {\rm trace}T^{\ast }\right\Vert
^{2}\right)  \nonumber \\
&\leq &Ric^{{\cal V}}\left( V_{1}\right) \leq Ric_{{\cal V}}^{M}\left(
V_{1}\right) -2Ric^{0M}\left( V_{1}\right) +2Ric^{0{\cal V}}\left(
V_{1}\right) -\zeta -\zeta ^{\ast }  \label{ineq-bot-side}
\end{eqnarray}%
The equalities in both inequalities of {\rm (\ref{ineq-bot-side})} hold
simultaneously for a unit vertical vector $h$ if and only if the matrices 
\[
{\cal T}^{\alpha }={\rm diag}\left( b_{\alpha },\ldots ,\underset{i^{th}\ 
{\rm position}}{\underbrace{a_{\alpha }}},\ldots ,b_{\alpha }\right) ,\quad
V=V_{i},\quad i=1,\ldots ,r,\quad \alpha \in \left\{ 1,\ldots ,s\right\} , 
\]%
\[
{\cal T}^{\ast \alpha }={\rm diag}\left( b_{\alpha }^{\ast },\ldots ,%
\underset{i^{th}\ {\rm position}}{\underbrace{a_{\alpha }^{\ast }}},\ldots
,b_{\alpha }^{\ast }\right) ,\quad h=h_{i},\quad i=1,\ldots ,r,\quad \alpha
\in \left\{ 1,\ldots ,s\right\} , 
\]%
and 
\[
\beta _{1}|a_{\alpha }-b_{\alpha }|+\beta _{2}\left( a_{\alpha
}+(r-1)b_{\alpha }\right) =0,\qquad \alpha \in \left\{ 1,\ldots ,s\right\} , 
\]%
\[
\beta _{1}|a_{\alpha }^{\ast }-b_{\alpha }^{\ast }|+\beta _{2}\left(
a_{\alpha }^{\ast }+(r-1)b_{\alpha }^{\ast }\right) =0,\qquad \alpha \in
\left\{ 1,\ldots ,s\right\} , 
\]%
where $\beta _{1}$ and $\beta _{2}$ are scalars but not both zero. Moreover, 
\[
a_{\alpha }=\frac{{\rm trace}{\cal T}^{\alpha }}{2},\qquad b_{\alpha }=\frac{%
{\rm trace}{\cal T}^{\alpha }}{2(r-1)}. 
\]%
\[
a_{\alpha }^{\ast }=\frac{{\rm trace}{\cal T}^{\ast \alpha }}{2},\qquad
b_{\alpha }^{\ast }=\frac{{\rm trace}{\cal T}^{\ast \alpha }}{2(r-1)}. 
\]%
Furthermore, the equalities in both inequalities of {\rm (\ref{ineq-bot-side}%
)} hold simultaneously for every unit vertical vector $h_{i}$, $i=1,\ldots
,r $, if and only if either the map $F$ is totally geodesic or $r=2$ and 
\[
{\cal T}_{11}^{\alpha }={\cal T}_{22}^{\alpha },\qquad {\cal T}_{12}^{\alpha
}=0,\qquad \alpha =1,\ldots ,s, 
\]%
\[
{\cal T}_{11}^{\ast \alpha }={\cal T}_{22}^{\ast \alpha },\qquad {\cal T}%
_{12}^{\ast \alpha }=0,\qquad \alpha =1,\ldots ,s, 
\]%
it is straightforward that the map $F$ is totally umbilical.
\end{theorem}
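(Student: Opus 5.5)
The two inequalities in \eqref{ineq-bot-side} are Theorems~\ref{Theorem-CR-vert} and~\ref{Theorem-Hineva-vert} rearranged. The left inequality is Theorem~\ref{Theorem-CR-vert} with $2Ric^{0M}(V_1)-2Ric^{0\mathcal V}(V_1)$ and the trace term moved to the left; the stray ``$-+$'' is read as ``$-$''. The right inequality is Theorem~\ref{Theorem-Hineva-vert}, written with the notation $\zeta,\zeta^{*}$ of the ``Fix notations'' subsection and with $\zeta+\zeta^{*}$ moved to the other side. Both follow from the exact identity \eqref{eq-abc} together with \eqref{eq-CR-ineq-4}, so the statement is implied by those two theorems, even though it is phrased for a Riemannian submersion; it is to be read in the statistical setting of the preceding theorems. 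So the real content is the simultaneous equality characterization.

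For equality at a fixed unit vertical vector $V=V_i$, I will intersect the two equality conditions. From the proof of Theorem~\ref{Theorem-CR-vert}, equality requires
\[
T^{\alpha}_{ij}=0,\quad T^{*\alpha}_{ij}=0 \ (j\neq i),
\qquad
T^{\alpha}_{ii}=\sum_{j\neq i}T^{\alpha}_{jj},
\qquad
T^{*\alpha}_{ii}=\sum_{j\neq i}T^{*\alpha}_{jj}.
\]
From Theorem~\ref{Theorem-HINEVA-vert}, after permuting the frame so that $V_i$ plays the role of $V_1$, each $T^{\alpha}$ must be $\mathrm{diag}(b_\alpha,\dots,a_\alpha,\dots,b_\alpha)$ with $a_\alpha$ in the $i$-th slot, and similarly for $T^{*\alpha}$. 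In addition, the Cauchy--Schwarz proportionality relations involving $\beta_1,\beta_2$ must hold. Substituting the diagonal form into the Chen--Ricci condition gives $a_\alpha=(r-1)b_\alpha$. Since $\operatorname{trace}T^{\alpha}=a_\alpha+(r-1)b_\alpha=2a_\alpha$, this yields $a_\alpha=\operatorname{trace}T^{\alpha}/2$ and $b_\alpha=\operatorname{trace}T^{\alpha}/(2(r-1))$, and the same for the starred quantities. Conversely, if the matrices have this form and the $\beta$-relations hold, then every inequality used in both proofs becomes an equality, so both bounds are attained.

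The main obstacle is checking that these values of $a_\alpha,b_\alpha$ are compatible with the eigenvalue formulas \eqref{eq-Hineva-b1}--\eqref{eq-Hineva-b2}, which are forced by Hineva's equality. I will compute $\|T^{\alpha}\|^{2}=a_\alpha^{2}+(r-1)b_\alpha^{2}$ and then evaluate $\sqrt{(r\|T^\alpha\|^2-(\operatorname{trace}T^\alpha)^2)/(r-1)}=|a_\alpha-b_\alpha|$. With this, the Hineva formulas hold for an appropriate choice of sign, because any matrix of the form $\mathrm{diag}(a,b,\dots,b)$ satisfies them. It remains to check that the $\beta$-relation does not introduce an inconsistency. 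That relation reads $\beta_1|a_\alpha-b_\alpha|+2\beta_2a_\alpha=0$, and after substitution it is a single linear condition on $\operatorname{trace}T^\alpha$ with $(r-2)$-type coefficients. It is simply retained as a hypothesis in the statement.

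For equality at every $V_i$, $i=1,\dots,r$, I will reuse the end of the proof of Theorem~\ref{Theorem-CR-vert}, namely \eqref{eq-RM-7}--\eqref{eq-RM-10}. Summing the diagonal conditions over $i$ gives $(r-2)\operatorname{trace}T^\alpha=0$, and likewise for $T^{*\alpha}$. If the traces vanish, every component of $T$ and $T^{*}$ is zero, so $F$ is totally geodesic. This is consistent with the Hineva equality, since the zero matrix is trivially of the required form. Otherwise $r=2$, and then $T^{\alpha}_{11}=T^{\alpha}_{22}$, $T^{\alpha}_{12}=0$, with the starred analogues, so $F$ is totally umbilical. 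In this case the Hineva side also holds, because $a_\alpha=b_\alpha$ for $r=2$ and the square-root term vanishes. The converse direction follows by direct substitution.
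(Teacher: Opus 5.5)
Your necessity argument is the same as the paper's. You intersect the equality cases of Theorems~\ref{Theorem-CR-vert} and~\ref{Theorem-Hineva-vert} to get $a_\alpha=(r-1)b_\alpha$, which gives the stated values. For all $V_i$ you sum $2T^{\alpha}_{ii}=\operatorname{trace}T^{\alpha}$ to get $(r-2)\operatorname{trace}T^{\alpha}=0$. The genuine gap is in the step you call the main obstacle, and therefore in your converse. The signs $\mp/\pm$ in Lemma~\ref{Hineva} are not free. For $k\ge 3$, equality forces the upper signs when $\operatorname{trace}B>0$ and the lower signs when $\operatorname{trace}B<0$. So it is false that every $\mathrm{diag}(x,y,\dots,y)$ attains equality. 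Take $B=\mathrm{diag}(x,y,\dots,y)$ of size $r$ and put $t=x+(r-1)y$, so that $\sqrt{(r\|B\|^{2}-t^{2})/(r-1)}=|x-y|$, as you observe. A direct computation gives
\[
xt-x^{2}-\frac{r-1}{r^{2}}\Bigl[2t^{2}-r\|B\|^{2}-(r-2)|t|\,|x-y|\Bigr]
=\frac{(r-1)(r-2)}{r^{2}}\Bigl[(x-y)t+\bigl|(x-y)t\bigr|\Bigr].
\]
Hence equality holds if and only if $r=2$ or $(x-y)t\le 0$. Under the Chen--Ricci equality $x=(r-1)y$ with $y=b_\alpha$, you get $(x-y)t=2(r-1)(r-2)b_\alpha^{2}$. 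So for $r\ge3$ the Hineva step is strict unless $b_\alpha=0$. For example, take $r=3$ and $\mathrm{diag}(2,1,1)$. It satisfies $T^{\alpha}_{11}=T^{\alpha}_{22}+T^{\alpha}_{33}$ and matches \eqref{eq-Hineva-b1}--\eqref{eq-Hineva-b2} with the lower signs. Yet the two sides of \eqref{eq-Hineva-algebraic} are $4$ and $20/9$.

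Consequently, your sentence ``conversely, if the matrices have this form and the $\beta$-relations hold, then every inequality \dots\ becomes an equality'' is false for $r\ge3$ whenever some $\operatorname{trace}T^{\alpha}$ or $\operatorname{trace}T^{*\alpha}$ is nonzero. Done correctly, the compatibility check gives a much more degenerate answer.
\begin{itemize}
\item For $r\ge3$, simultaneous equality at even one unit vertical vector already forces $T^{\alpha}=T^{*\alpha}=0$ for every $\alpha$. The nontrivial diagonal description is therefore never realized.
\item For $r=2$, \eqref{eq-Hineva-algebraic} is an identity: both sides equal $b_{11}b_{22}-b_{12}^{2}$. So the Hineva side imposes nothing, and the lemma's ``only if diagonal'' also fails there. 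Simultaneous equality is then exactly the Chen--Ricci condition $T^{\alpha}_{11}=T^{\alpha}_{22}$, $T^{\alpha}_{12}=0$, together with its starred analogue.
\end{itemize}
Your ``every $V_i$'' conclusion is unaffected. The paper's own proof argues only necessity and never performs this sign check, so it misses the same point. The ``if'' part of the first characterization has to be replaced, not proved.

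Separately, \eqref{eq-abc} is not an exact identity when $\operatorname{Ric}$ is built from $\nabla$ alone. Its right-hand side exceeds that of \eqref{eq-Ric-Tij-1} by $\tfrac12\sum_{\alpha}\bigl(T^{*\alpha}_{11}\sum_{j\ge2}T^{\alpha}_{jj}-T^{\alpha}_{11}\sum_{j\ge2}T^{*\alpha}_{jj}\bigr)$. This vanishes when $T^{*}=-T$, as in the examples, but not in general. Since you, like the paper, take Theorems~\ref{Theorem-CR-vert} and~\ref{Theorem-Hineva-vert} as given, this problem is inherited rather than specific to your argument.
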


\noindent {\bf Proof.} Assume that the equalities in both inequalities of (%
\ref{ineq-bot-side}) hold simultaneously for the unit vertical vector $%
V=V_{1}$. From the equality case of Theorem~\ref{Theorem-CR-vert}, we obtain 
\begin{eqnarray}
{\cal T}_{1j}^{\alpha } &=&0,\qquad j=2,\ldots ,r,\quad \alpha =1,\ldots ,s,
\nonumber \\
{\cal T}_{1j}^{\ast \alpha } &=&0,\qquad j=2,\ldots ,r,\quad \alpha
=1,\ldots ,s  \label{eq-eq-1}
\end{eqnarray}%
and 
\begin{eqnarray}
{\cal T}_{11}^{\alpha } &=&{\cal T}_{22}^{\alpha }+\cdots +{\cal T}%
_{rr}^{\alpha },\qquad \alpha =1,\ldots ,s.  \nonumber \\
{\cal T}_{11}^{\ast \alpha } &=&{\cal T}_{22}^{\ast \alpha }+\cdots +{\cal T}%
_{rr}^{\ast \alpha },\qquad \alpha =1,\ldots ,s.  \label{eq-eq-2}
\end{eqnarray}%
On the other hand, the equality case of Theorem~\ref{Theorem-Hineva-vert}
yields 
\begin{eqnarray}
{\cal T}_{11}^{\alpha } &=&a_{\alpha },\qquad {\cal T}_{22}^{\alpha }=\cdots
={\cal T}_{rr}^{\alpha }=b_{\alpha },\qquad \alpha =1,\ldots ,s.  \nonumber
\\
{\cal T}_{11}^{\ast \alpha } &=&a_{\alpha }^{\ast },\qquad {\cal T}%
_{22}^{\ast \alpha }=\cdots ={\cal T}_{rr}^{\ast \alpha }=b_{\alpha }^{\ast
},\qquad \alpha =1,\ldots ,s.  \label{eq-eq-3}
\end{eqnarray}%
Combining (\ref{eq-eq-2}) and (\ref{eq-eq-3}), we obtain%
\[
2{\cal T}_{11}^{\alpha }={\rm trace}({\cal T}^{\alpha }),\quad 2{\cal T}%
_{11}^{\ast \alpha }={\rm trace}({\cal T}^{\ast \alpha }), 
\]%
which immediately gives 
\[
a_{\alpha }=\frac{{\rm trace}({\cal T}^{\alpha })}{2},\quad a_{\alpha
}^{\ast }=\frac{{\rm trace}({\cal T}^{\ast \alpha })}{2}, 
\]%
From (\ref{eq-eq-2}) and (\ref{eq-eq-3}), we obtain 
\[
a_{\alpha }=(r-1)b_{\alpha },\quad a_{\alpha }^{\ast }=(r-1)b_{\alpha
}^{\ast }, 
\]%
consequently, 
\[
b_{\alpha }=\frac{{\rm trace}({\cal T}^{\alpha })}{2(r-1)},\quad b_{\alpha
}^{\ast }=\frac{{\rm trace}({\cal T}^{\ast \alpha })}{2(r-1)}. 
\]%
Suppose that the equalities in (\ref{ineq-bot-side}) hold simultaneously for
every unit vertical vector $V_{i}$, $i=1,\ldots ,r$. Then, for each $\alpha
\in \left\{ 1,\ldots ,s\right\} $, by the equality case of Theorem~\ref%
{Theorem-CR-vert}, 
\begin{equation}
{\cal T}_{ij}^{\alpha }=0,\quad {\cal T}_{ij}^{\ast \alpha }=0,\qquad i\neq
j,\quad i,j\in \{1,\ldots ,r\},  \label{eq-sim-mix-1}
\end{equation}%
and 
\begin{equation}
{\cal T}_{ii}^{\alpha }=\sum_{k=1,k\neq i}^{r}{\cal T}_{kk}^{\alpha },\qquad 
{\cal T}_{ii}^{\ast \alpha }=\sum_{k=1,k\neq i}^{r}{\cal T}_{kk}^{\ast
\alpha },\quad i=1,\ldots ,r.  \label{eq-sim-mix-2}
\end{equation}%
Furthermore, for each $i$, the equality case of Theorem~\ref%
{Theorem-Hineva-vert} implies 
\begin{equation}
{\cal T}_{ii}^{\alpha }=a_{\alpha },\ {\cal T}_{ii}^{\ast \alpha }=a_{\alpha
}^{\ast },\ {\cal T}_{kk}^{\alpha }=b_{\alpha },\ {\cal T}_{kk}^{\ast \alpha
}=b_{\alpha }^{\ast },\quad k\neq i,\quad k\in \left\{ 1,\ldots
,i-1,i+1,\ldots ,r\right\} .  \label{eq-sim-mix-3}
\end{equation}%
From (\ref{eq-sim-mix-2}) and (\ref{eq-sim-mix-3}), we obtain 
\[
2{\cal T}_{ii}^{\alpha }=a_{\alpha }+(r-1)b_{\alpha },\ 2{\cal T}_{ii}^{\ast
\alpha }=a_{\alpha }^{\ast }+(r-1)b_{\alpha }^{\ast },\quad i\in \left\{
1,\ldots ,r\right\} , 
\]%
which leads to 
\[
2\left( a_{\alpha }+(r-1)b_{\alpha }\right) =r\left( a_{\alpha
}+(r-1)b_{\alpha }\right) ,\quad 2\left( a_{\alpha }^{\ast }+(r-1)b_{\alpha
}^{\ast }\right) =r\left( a_{\alpha }^{\ast }+(r-1)b_{\alpha }^{\ast
}\right) . 
\]%
Therefore, 
\[
(r-2)\left( a_{\alpha }+(r-1)b_{\alpha }\right) =0,\quad (r-2)\left(
a_{\alpha }^{\ast }+(r-1)b_{\alpha }^{\ast }\right) =0. 
\]%
Thus, 
\[
{\rm either\ }r=2\ {\rm or}\ a_{\alpha }+(r-1)b_{\alpha }=0\ {\rm and}\
a_{\alpha }^{\ast }+(r-1)b_{\alpha }^{\ast }=0. 
\]%
If 
\[
a_{\alpha }+(r-1)b_{\alpha }=0\ {\rm and}\ a_{\alpha }^{\ast
}+(r-1)b_{\alpha }^{\ast }=0 
\]%
then 
\[
{\cal T}_{ii}^{\alpha }=0,\ {\cal T}_{ii}^{\ast \alpha }=0,\qquad {\cal T}%
_{ij}^{\alpha }=0,\ {\cal T}_{ij}^{\ast \alpha }=0,\quad (i\neq j), 
\]%
which implies that ${\cal T}_{ij}^{\alpha }=0$ and ${\cal T}_{ij}^{\ast
\alpha }=0$ for all $i,j\in \left\{ 1,\ldots ,r\right\} $ and $\alpha \in
\left\{ 1,\ldots ,s\right\} $. Hence, the fibres of $F$ are totally
geodesic. If $r=2$, then 
\[
{\cal T}_{11}^{\alpha }={\cal T}_{22}^{\alpha },\ {\cal T}_{11}^{\ast \alpha
}={\cal T}_{22}^{\ast \alpha },\qquad {\cal T}_{12}^{\alpha }=0,\ {\cal T}%
_{12}^{\ast \alpha }=0, 
\]%
it is straightforward that the fibres of $F$ are totally umbilical.

\begin{example}
Consider
\[
M=\mathbb{R}^{4},\qquad N=\mathbb{R}^{2},
\]
endowed with the standard Euclidean metrics
\[
g_{1}=dx^{2}+dy^{2}+dz^{2}+dw^{2},
\qquad
g_{2}=dx^{2}+dy^{2},
\]
and define the map
\[
F:M\longrightarrow N,\qquad
F(x,y,z,w)=(x,y).
\]
Let
\[
\mathcal{V}=\ker F_{*}
=\operatorname{span}\{V_{1},V_{2}\},
\qquad
\mathcal{H}=\mathcal{V}^{\perp}
=\operatorname{span}\{h_{1},h_{2}\},
\]
where
\[
V_{1}=\frac{\partial}{\partial z},\qquad
V_{2}=\frac{\partial}{\partial w},\qquad
h_{1}=\frac{\partial}{\partial x},\qquad
h_{2}=\frac{\partial}{\partial y}.
\]
Thus
\[
r=\dim\mathcal{V}=2,\qquad
s=\dim\mathcal{H}=2.
\]

Let $\nabla^{0}$ denote the Euclidean Levi--Civita connection on
$M$. Fix a nonzero constant $a$ and define a symmetric $(1,2)$-tensor
$K$ by
\[
K_{V_{1}}V_{1}=ah_{1},\qquad
K_{V_{2}}V_{2}=ah_{1},
\]
\[
K_{V_{1}}h_{1}=aV_{1},\qquad
K_{V_{2}}h_{1}=aV_{2},
\]
and let all remaining components of $K$ be zero. Define two affine
connections on $M$ by
\[
\nabla_{X}Y=\nabla^{0}_{X}Y+K_{X}Y,
\qquad
\nabla^{*}_{X}Y=\nabla^{0}_{X}Y-K_{X}Y.
\]
Since $K$ is symmetric and
\[
g_{1}(K_{X}Y,Z)=g_{1}(Y,K_{X}Z),
\]
the connections $\nabla$ and $\nabla^{*}$ are torsion-free and
conjugate with respect to $g_{1}$. Hence
\[
(M,\nabla,g_{1})
\]
is a statistical manifold.

For horizontal basic vector fields $X,Y$, we have
\[
F_{*}(\nabla_{X}Y)
=\nabla^{2}_{F_{*}X}F_{*}Y,
\]
where $\nabla^{2}$ is the Euclidean connection on $N$. Consequently,
\[
F:(M,\nabla,g_{1})\longrightarrow
(N,\nabla^{2},g_{2})
\]
is a statistical submersion.

We now compute the fundamental tensors appearing in the
Chen--Ricci and Hineva inequalities. Since
\[
T_{V_{i}}V_{j}
=\mathcal{H}\nabla_{V_{i}}V_{j},
\]
we obtain
\[
T_{V_{1}}V_{1}=ah_{1},\qquad
T_{V_{2}}V_{2}=ah_{1},
\]
and
\[
T_{V_{1}}V_{2}
=T_{V_{2}}V_{1}=0.
\]
Therefore,
\[
\left(T^{1}_{ij}\right)
=
\begin{pmatrix}
a&0\\
0&a
\end{pmatrix},
\qquad
\left(T^{2}_{ij}\right)
=
\begin{pmatrix}
0&0\\
0&0
\end{pmatrix}.
\]

Similarly, since
\[
T^{*}_{V_{i}}V_{j}
=\mathcal{H}\nabla^{*}_{V_{i}}V_{j},
\]
we have
\[
T^{*}_{V_{1}}V_{1}=-ah_{1},
\qquad
T^{*}_{V_{2}}V_{2}=-ah_{1},
\]
and
\[
T^{*}_{V_{1}}V_{2}
=T^{*}_{V_{2}}V_{1}=0.
\]
Hence
\[
\left(T^{*1}_{ij}\right)
=
\begin{pmatrix}
-a&0\\
0&-a
\end{pmatrix},
\qquad
\left(T^{*2}_{ij}\right)
=
\begin{pmatrix}
0&0\\
0&0
\end{pmatrix}.
\]

Consequently,
\[
T^{0\alpha}_{ij}
=\frac{1}{2}
\left(T^{\alpha}_{ij}+T^{*\alpha}_{ij}\right)
\]
gives
\[
T^{01}_{ij}=T^{02}_{ij}=0.
\]
Thus
\[
T^{0}=0.
\]

Furthermore,
\[
\operatorname{trace}T
=\sum_{i=1}^{2}T_{V_{i}}V_{i}
=2ah_{1},
\]
and therefore
\[
\|\operatorname{trace}T\|^{2}=4a^{2}.
\]
Similarly,
\[
\operatorname{trace}T^{*}=-2ah_{1},
\qquad
\|\operatorname{trace}T^{*}\|^{2}=4a^{2}.
\]
Moreover,
\[
\|T\|^{2}
=\sum_{\alpha=1}^{2}\sum_{i,j=1}^{2}
\left(T^{\alpha}_{ij}\right)^{2}
=2a^{2},
\]
and
\[
\|T^{*}\|^{2}=2a^{2}.
\]

Since the fibres of $F$ are Euclidean planes, their intrinsic
curvature vanishes. Hence
\[
\operatorname{Ric}^{\mathcal V}(V_{1})=0.
\]
Also, with respect to the Levi--Civita connection $\nabla^{0}$,
\[
\operatorname{Ric}^{0M}(V_{1})=0,
\qquad
\operatorname{Ric}^{0\mathcal V}(V_{1})=0.
\]

On the other hand, using the connection $\nabla$, we obtain
\[
R(V_{1},V_{2})V_{2}=a^{2}V_{1},
\]
and consequently
\[
\operatorname{Ric}^{M}_{\mathcal V}(V_{1})
=
R(V_{1},V_{2},V_{2},V_{1})
=a^{2}.
\]

We now verify the Chen--Ricci inequality. By Theorem~3.1,
\[
\operatorname{Ric}^{M}_{\mathcal V}(V_{1})
\leq
\operatorname{Ric}^{\mathcal V}(V_{1})
+2\operatorname{Ric}^{0M}(V_{1})
-2\operatorname{Ric}^{0\mathcal V}(V_{1})
+\frac{1}{8}
\left(
\|\operatorname{trace}T\|^{2}
+
\|\operatorname{trace}T^{*}\|^{2}
\right).
\]
Substituting the above quantities, we get
\[
\begin{aligned}
\operatorname{Ric}^{M}_{\mathcal V}(V_{1})
&\leq
0+2(0)-2(0)
+\frac18(4a^{2}+4a^{2})\\
&=a^{2}.
\end{aligned}
\]
Since
\[
\operatorname{Ric}^{M}_{\mathcal V}(V_{1})=a^{2},
\]
we conclude that
\[
\boxed{
\operatorname{Ric}^{M}_{\mathcal V}(V_{1})
=
a^{2}
=
\operatorname{Ric}^{\mathcal V}(V_{1})
+2\operatorname{Ric}^{0M}(V_{1})
-2\operatorname{Ric}^{0\mathcal V}(V_{1})
+\frac18
\left(
\|\operatorname{trace}T\|^{2}
+
\|\operatorname{trace}T^{*}\|^{2}
\right)
}.
\]
Thus equality is attained in the Chen--Ricci inequality.

We next verify the Hineva inequality. Since $r=2$, the terms
involving $r-2$ vanish. Hence the contribution corresponding to
$T$ is
\[
\begin{aligned}
\mathcal{T}
&=
\frac{r-1}{2r^{2}}
\left(
2\|\operatorname{trace}T\|^{2}
-r\|T\|^{2}
\right)\\
&=
\frac18
\left(
2(4a^{2})-2(2a^{2})
\right)\\
&=\frac12a^{2}.
\end{aligned}
\]
Similarly,
\[
\begin{aligned}
\mathcal{T}^{*}
&=
\frac{r-1}{2r^{2}}
\left(
2\|\operatorname{trace}T^{*}\|^{2}
-r\|T^{*}\|^{2}
\right)\\
&=
\frac18
\left(
2(4a^{2})-2(2a^{2})
\right)\\
&=\frac12a^{2}.
\end{aligned}
\]
Therefore,
\[
\mathcal{T}+\mathcal{T}^{*}=a^{2}.
\]
The Hineva inequality consequently reduces to
\[
\operatorname{Ric}^{M}_{\mathcal V}(V_{1})
\geq
0+2(0)-2(0)+a^{2}.
\]
Thus
\[
\operatorname{Ric}^{M}_{\mathcal V}(V_{1})
\geq a^{2}.
\]
Since
\[
\operatorname{Ric}^{M}_{\mathcal V}(V_{1})=a^{2},
\]
we have
\[
\boxed{
\operatorname{Ric}^{M}_{\mathcal V}(V_{1})
=a^{2}
}
\]
and equality is also attained in the Hineva inequality.

Therefore, this example simultaneously realizes equality in both the
Chen--Ricci upper bound and the Hineva lower bound along the vertical
distribution.
\end{example}

\begin{example}
Consider
\[
M=\mathbb{R}^{4},\qquad N=\mathbb{R}^{2},
\]
endowed with the standard Euclidean metrics
\[
g_{1}=dx^{2}+dy^{2}+dz^{2}+dw^{2},
\qquad
g_{2}=dx^{2}+dy^{2},
\]
and define the map
\[
F:M\longrightarrow N,\qquad
F(x,y,z,w)=(x,y).
\]
Let
\[
\mathcal{V}=\ker F_{*}
=\operatorname{span}\{V_{1},V_{2}\},
\qquad
\mathcal{H}=\mathcal{V}^{\perp}
=\operatorname{span}\{h_{1},h_{2}\},
\]
where
\[
V_{1}=\frac{\partial}{\partial z},\qquad
V_{2}=\frac{\partial}{\partial w},\qquad
h_{1}=\frac{\partial}{\partial x},\qquad
h_{2}=\frac{\partial}{\partial y}.
\]
Thus
\[
r=\dim\mathcal{V}=2,\qquad
s=\dim\mathcal{H}=2.
\]

Let $\nabla^{0}$ denote the Euclidean Levi--Civita connection on
$M$. Fix two distinct nonzero constants $a,b$ with
\[
a\neq b,
\]
and define a symmetric $(1,2)$-tensor $K$ by
\[
K_{V_{1}}V_{1}=ah_{1},\qquad
K_{V_{2}}V_{2}=bh_{1},
\]
\[
K_{V_{1}}h_{1}=aV_{1},\qquad
K_{V_{2}}h_{1}=bV_{2},
\]
and let all remaining components of $K$ be zero. Define two affine
connections on $M$ by
\[
\nabla_{X}Y=\nabla^{0}_{X}Y+K_{X}Y,
\qquad
\nabla^{*}_{X}Y=\nabla^{0}_{X}Y-K_{X}Y.
\]
Since $K$ is symmetric and
\[
g_{1}(K_{X}Y,Z)=g_{1}(Y,K_{X}Z),
\]
the connections $\nabla$ and $\nabla^{*}$ are torsion-free and
conjugate with respect to $g_{1}$. Hence
\[
(M,\nabla,g_{1})
\]
is a statistical manifold.

For horizontal basic vector fields $X,Y$, we have
\[
F_{*}(\nabla_{X}Y)
=
\nabla^{2}_{F_{*}X}F_{*}Y,
\]
where $\nabla^{2}$ is the Euclidean connection on $N$. Consequently,
\[
F:(M,\nabla,g_{1})
\longrightarrow
(N,\nabla^{2},g_{2})
\]
is a statistical submersion.

We now compute the fundamental tensors appearing in the
Chen--Ricci and Hineva inequalities. Since
\[
T_{V_{i}}V_{j}
=
\mathcal{H}\nabla_{V_{i}}V_{j},
\]
we obtain
\[
T_{V_{1}}V_{1}=ah_{1},\qquad
T_{V_{2}}V_{2}=bh_{1},
\]
and
\[
T_{V_{1}}V_{2}
=
T_{V_{2}}V_{1}
=
0.
\]
Therefore,
\[
\left(T^{1}_{ij}\right)
=
\begin{pmatrix}
a&0\\
0&b
\end{pmatrix},
\qquad
\left(T^{2}_{ij}\right)
=
\begin{pmatrix}
0&0\\
0&0
\end{pmatrix}.
\]

Similarly,
\[
T^{*}_{V_{i}}V_{j}
=
\mathcal{H}\nabla^{*}_{V_{i}}V_{j},
\]
and hence
\[
T^{*}_{V_{1}}V_{1}=-ah_{1},
\qquad
T^{*}_{V_{2}}V_{2}=-bh_{1},
\]
while
\[
T^{*}_{V_{1}}V_{2}
=
T^{*}_{V_{2}}V_{1}
=
0.
\]
Thus,
\[
\left(T^{*1}_{ij}\right)
=
\begin{pmatrix}
-a&0\\
0&-b
\end{pmatrix},
\qquad
\left(T^{*2}_{ij}\right)
=
\begin{pmatrix}
0&0\\
0&0
\end{pmatrix}.
\]

Consequently,
\[
T^{01}_{ij}
=
\frac12
\left(T^{1}_{ij}+T^{*1}_{ij}\right)
=
0,
\qquad
T^{02}_{ij}=0,
\]
and therefore
\[
T^{0}=0.
\]

Furthermore,
\[
\operatorname{trace}T
=
\sum_{i=1}^{2}T_{V_{i}}V_{i}
=
(a+b)h_{1},
\]
and hence
\[
\|\operatorname{trace}T\|^{2}
=
(a+b)^{2}.
\]
Similarly,
\[
\operatorname{trace}T^{*}
=
-(a+b)h_{1},
\]
so that
\[
\|\operatorname{trace}T^{*}\|^{2}
=
(a+b)^{2}.
\]
Moreover,
\[
\|T\|^{2}
=
\sum_{\alpha=1}^{2}
\sum_{i,j=1}^{2}
\left(T^{\alpha}_{ij}\right)^{2}
=
a^{2}+b^{2},
\]
and
\[
\|T^{*}\|^{2}
=
a^{2}+b^{2}.
\]

Since the fibres of $F$ are Euclidean planes, their intrinsic
curvature vanishes. Hence
\[
\operatorname{Ric}^{\mathcal V}(V_{1})=0.
\]
Also, with respect to the Levi--Civita connection $\nabla^{0}$,
\[
\operatorname{Ric}^{0M}(V_{1})=0,
\qquad
\operatorname{Ric}^{0\mathcal V}(V_{1})=0.
\]

On the other hand, using the connection $\nabla$, we have
\[
R(V_{1},V_{2})V_{2}
=
abV_{1}.
\]
Consequently,
\[
\operatorname{Ric}^{M}_{\mathcal V}(V_{1})
=
R(V_{1},V_{2},V_{2},V_{1})
=
ab.
\]

We now verify the Chen--Ricci inequality. By Theorem~3.1,
\[
\operatorname{Ric}^{M}_{\mathcal V}(V_{1})
\leq
\operatorname{Ric}^{\mathcal V}(V_{1})
+
2\operatorname{Ric}^{0M}(V_{1})
-
2\operatorname{Ric}^{0\mathcal V}(V_{1})
+
\frac18
\left(
\|\operatorname{trace}T\|^{2}
+
\|\operatorname{trace}T^{*}\|^{2}
\right).
\]
Substituting the above quantities, we obtain
\[
\begin{aligned}
\operatorname{Ric}^{M}_{\mathcal V}(V_{1})
&\leq
0+2(0)-2(0)
+
\frac18
\left(
(a+b)^{2}+(a+b)^{2}
\right)\\
&=
\frac14(a+b)^{2}.
\end{aligned}
\]
Since
\[
\operatorname{Ric}^{M}_{\mathcal V}(V_{1})=ab,
\]
we have
\[
ab\leq\frac14(a+b)^{2}.
\]
Indeed,
\[
\frac14(a+b)^{2}-ab
=
\frac14(a-b)^{2}.
\]
Since $a\neq b$,
\[
\frac14(a-b)^{2}>0.
\]
Therefore,
\[
\boxed{
\operatorname{Ric}^{M}_{\mathcal V}(V_{1})
<
\operatorname{Ric}^{\mathcal V}(V_{1})
+
2\operatorname{Ric}^{0M}(V_{1})
-
2\operatorname{Ric}^{0\mathcal V}(V_{1})
+
\frac18
\left(
\|\operatorname{trace}T\|^{2}
+
\|\operatorname{trace}T^{*}\|^{2}
\right)
}.
\]
Thus, the Chen--Ricci inequality is strict.

For a concrete choice, take
\[
a=1,\qquad b=2.
\]
Then
\[
\operatorname{Ric}^{M}_{\mathcal V}(V_{1})=2,
\]
whereas
\[
\frac18
\left(
\|\operatorname{trace}T\|^{2}
+
\|\operatorname{trace}T^{*}\|^{2}
\right)
=
\frac18(9+9)
=
\frac94.
\]
Consequently,
\[
\boxed{
2<\frac94,
}
\]
which explicitly demonstrates that equality is not attained in the
Chen--Ricci inequality.

We finally verify the Hineva inequality. Since $r=2$, the terms
containing $r-2$ vanish. The contribution corresponding to $T$ is
\[
\begin{aligned}
\mathcal{T}
&=
\frac{r-1}{2r^{2}}
\left(
2\|\operatorname{trace}T\|^{2}
-r\|T\|^{2}
\right)\\
&=
\frac18
\left(
2(a+b)^{2}
-
2(a^{2}+b^{2})
\right)\\
&=
\frac18(4ab)
=
\frac12ab.
\end{aligned}
\]
Similarly,
\[
\begin{aligned}
\mathcal{T}^{*}
&=
\frac{r-1}{2r^{2}}
\left(
2\|\operatorname{trace}T^{*}\|^{2}
-r\|T^{*}\|^{2}
\right)\\
&=
\frac18
\left(
2(a+b)^{2}
-
2(a^{2}+b^{2})
\right)\\
&=
\frac12ab.
\end{aligned}
\]
Therefore,
\[
\mathcal{T}+\mathcal{T}^{*}
=
ab.
\]
The Hineva inequality consequently becomes
\[
\operatorname{Ric}^{M}_{\mathcal V}(V_{1})
\geq
0+2(0)-2(0)+ab,
\]
that is,
\[
\operatorname{Ric}^{M}_{\mathcal V}(V_{1})
\geq ab.
\]
But
\[
\operatorname{Ric}^{M}_{\mathcal V}(V_{1})=ab.
\]
Hence equality is attained in the Hineva inequality:
\[
\boxed{
\operatorname{Ric}^{M}_{\mathcal V}(V_{1})=ab.
}
\]

Therefore, this example gives a statistical submersion for which the
Chen--Ricci inequality is strict, while the Hineva inequality is
attained as an equality. In particular, for $a=1$ and $b=2$,
\[
\boxed{
2<\frac94
}
\]
for the Chen--Ricci inequality, whereas
\[
\boxed{
\operatorname{Ric}^{M}_{\mathcal V}(V_{1})=2
}
\]
for the Hineva inequality.
\end{example}

\begin{example}
Consider
\[
M=\mathbb{R}^{5},\qquad N=\mathbb{R}^{2},
\]
endowed with the standard Euclidean metrics
\[
g_{1}
=
dx^{2}+dy^{2}+dz_{1}^{2}+dz_{2}^{2}+dz_{3}^{2},
\qquad
g_{2}=dx^{2}+dy^{2},
\]
and define the map
\[
F:M\longrightarrow N,\qquad
F(x,y,z_{1},z_{2},z_{3})=(x,y).
\]
Let
\[
\mathcal{V}=\ker F_{*}
=
\operatorname{span}\{V_{1},V_{2},V_{3}\},
\qquad
\mathcal{H}=\mathcal{V}^{\perp}
=
\operatorname{span}\{h_{1},h_{2}\},
\]
where
\[
V_{1}=\frac{\partial}{\partial z_{1}},
\qquad
V_{2}=\frac{\partial}{\partial z_{2}},
\qquad
V_{3}=\frac{\partial}{\partial z_{3}},
\]
and
\[
h_{1}=\frac{\partial}{\partial x},
\qquad
h_{2}=\frac{\partial}{\partial y}.
\]
Thus
\[
r=\dim\mathcal{V}=3,
\qquad
s=\dim\mathcal{H}=2.
\]

Let $\nabla^{0}$ denote the Euclidean Levi--Civita connection on
$M$. Define a symmetric $(1,2)$-tensor $K$ by
\[
K_{V_{1}}V_{1}=2h_{1},
\qquad
K_{V_{2}}V_{2}=h_{1},
\qquad
K_{V_{3}}V_{3}=0,
\]
\[
K_{V_{1}}h_{1}
=
K_{h_{1}}V_{1}
=
2V_{1},
\qquad
K_{V_{2}}h_{1}
=
K_{h_{1}}V_{2}
=
V_{2},
\qquad
K_{V_{3}}h_{1}
=
K_{h_{1}}V_{3}
=
0,
\]
and let all remaining components of $K$ be zero. Define two affine
connections on $M$ by
\[
\nabla_{X}Y=\nabla^{0}_{X}Y+K_{X}Y,
\qquad
\nabla^{*}_{X}Y=\nabla^{0}_{X}Y-K_{X}Y.
\]
Since $K$ is symmetric and
\[
g_{1}(K_{X}Y,Z)=g_{1}(Y,K_{X}Z),
\]
the connections $\nabla$ and $\nabla^{*}$ are torsion-free and
conjugate with respect to $g_{1}$. Hence
\[
(M,\nabla,g_{1})
\]
is a statistical manifold.

For horizontal basic vector fields $X,Y$, we have
\[
K_{X}Y=0
\]
and therefore
\[
F_{*}(\nabla_{X}Y)
=
F_{*}(\nabla^{0}_{X}Y)
=
\nabla^{2}_{F_{*}X}F_{*}Y,
\]
where $\nabla^{2}$ is the Euclidean connection on $N$. Consequently,
\[
F:(M,\nabla,g_{1})
\longrightarrow
(N,\nabla^{2},g_{2})
\]
is a statistical submersion.

We now compute the fundamental tensors. Since
\[
T_{V_{i}}V_{j}
=
\mathcal{H}\nabla_{V_{i}}V_{j},
\]
we obtain
\[
T_{V_{1}}V_{1}=2h_{1},
\qquad
T_{V_{2}}V_{2}=h_{1},
\qquad
T_{V_{3}}V_{3}=0,
\]
and
\[
T_{V_{i}}V_{j}=0,
\qquad i\neq j.
\]
Therefore,
\[
\left(T^{1}_{ij}\right)
=
\begin{pmatrix}
2&0&0\\
0&1&0\\
0&0&0
\end{pmatrix},
\qquad
\left(T^{2}_{ij}\right)
=
\begin{pmatrix}
0&0&0\\
0&0&0\\
0&0&0
\end{pmatrix}.
\]

Similarly,
\[
T^{*}_{V_{i}}V_{j}
=
\mathcal{H}\nabla^{*}_{V_{i}}V_{j},
\]
and hence
\[
T^{*}_{V_{1}}V_{1}=-2h_{1},
\qquad
T^{*}_{V_{2}}V_{2}=-h_{1},
\qquad
T^{*}_{V_{3}}V_{3}=0,
\]
while
\[
T^{*}_{V_{i}}V_{j}=0,
\qquad i\neq j.
\]
Thus,
\[
\left(T^{*1}_{ij}\right)
=
\begin{pmatrix}
-2&0&0\\
0&-1&0\\
0&0&0
\end{pmatrix},
\qquad
\left(T^{*2}_{ij}\right)
=
\begin{pmatrix}
0&0&0\\
0&0&0\\
0&0&0
\end{pmatrix}.
\]

Consequently,
\[
T^{01}_{ij}
=
\frac12
\left(
T^{1}_{ij}+T^{*1}_{ij}
\right)
=0,
\qquad
T^{02}_{ij}=0,
\]
and therefore
\[
T^{0}=0.
\]

Furthermore,
\[
\operatorname{trace}T
=
\sum_{i=1}^{3}T_{V_i}V_i
=
3h_{1},
\]
and hence
\[
\left\|\operatorname{trace}T\right\|^{2}=9.
\]
Similarly,
\[
\operatorname{trace}T^{*}
=
-3h_{1},
\qquad
\left\|\operatorname{trace}T^{*}\right\|^{2}=9.
\]
Moreover,
\[
\begin{aligned}
\|T\|^{2}
&=
\sum_{\alpha=1}^{2}
\sum_{i,j=1}^{3}
\left(T^{\alpha}_{ij}\right)^{2}\\
&=
2^{2}+1^{2}\\
&=5,
\end{aligned}
\]
and
\[
\|T^{*}\|^{2}=5.
\]

Since the fibres of $F$ are Euclidean $3$-planes, their intrinsic
curvature vanishes. Hence
\[
\operatorname{Ric}^{\mathcal V}(V_{1})=0.
\]
Also, with respect to the Levi--Civita connection $\nabla^{0}$,
\[
\operatorname{Ric}^{0M}(V_{1})=0,
\qquad
\operatorname{Ric}^{0\mathcal V}(V_{1})=0.
\]

On the other hand, using the connection $\nabla$, we obtain
\[
R(V_{1},V_{2})V_{2}
=
K_{V_{1}}(K_{V_{2}}V_{2})
-
K_{V_{2}}(K_{V_{1}}V_{2})
=
K_{V_{1}}h_{1}
=
2V_{1},
\]
and
\[
R(V_{1},V_{3})V_{3}=0.
\]
Consequently,
\[
\begin{aligned}
\operatorname{Ric}^{M}_{\mathcal V}(V_{1})
&=
R(V_{1},V_{2},V_{2},V_{1})
+
R(V_{1},V_{3},V_{3},V_{1})\\
&=2+0\\
&=2.
\end{aligned}
\]

We first verify the Chen--Ricci inequality. By Theorem~3.1,
\[
\operatorname{Ric}^{M}_{\mathcal V}(V_{1})
\leq
\operatorname{Ric}^{\mathcal V}(V_{1})
+
2\operatorname{Ric}^{0M}(V_{1})
-
2\operatorname{Ric}^{0\mathcal V}(V_{1})
+
\frac18
\left(
\|\operatorname{trace}T\|^{2}
+
\|\operatorname{trace}T^{*}\|^{2}
\right).
\]
Substituting the above quantities gives
\[
\begin{aligned}
\operatorname{Ric}^{M}_{\mathcal V}(V_{1})
&\leq
0+2(0)-2(0)
+\frac18(9+9)\\
&=\frac94.
\end{aligned}
\]
Since
\[
\operatorname{Ric}^{M}_{\mathcal V}(V_{1})=2,
\]
we obtain
\[
\boxed{
2<\frac94.
}
\]
Thus, the Chen--Ricci inequality is strict.

We next verify the Hineva inequality. Since $r=3$, we have
\[
\frac{r-1}{2r^{2}}
=
\frac{2}{18}
=
\frac19.
\]
Moreover,
\[
\frac{
r\|T\|^{2}
-
\|\operatorname{trace}T\|^{2}
}{r-1}
=
\frac{3(5)-9}{2}
=
3.
\]
Therefore,
\[
\sqrt{
\frac{
r\|T\|^{2}
-
\|\operatorname{trace}T\|^{2}
}{r-1}
}
=
\sqrt{3}.
\]
The contribution corresponding to $T$ is
\[
\begin{aligned}
\mathcal{T}
&=
\frac{r-1}{2r^{2}}
\left[
2\|\operatorname{trace}T\|^{2}
-r\|T\|^{2}
-(r-2)
\|\operatorname{trace}T\|
\sqrt{
\frac{
r\|T\|^{2}
-
\|\operatorname{trace}T\|^{2}
}{r-1}
}
\right]\\
&=
\frac19
\left[
2(9)-3(5)
-(1)(3)\sqrt{3}
\right]\\
&=
\frac19
\left(
18-15-3\sqrt{3}
\right)\\
&=
\frac{1-\sqrt{3}}{3}.
\end{aligned}
\]
Similarly,
\[
\mathcal{T}^{*}
=
\frac{1-\sqrt{3}}{3}.
\]
Hence
\[
\mathcal{T}+\mathcal{T}^{*}
=
\frac{2(1-\sqrt{3})}{3}.
\]

Therefore, the Hineva inequality becomes
\[
\operatorname{Ric}^{M}_{\mathcal V}(V_{1})
\geq
0+2(0)-2(0)
+
\frac{2(1-\sqrt{3})}{3}.
\]
Thus,
\[
\operatorname{Ric}^{M}_{\mathcal V}(V_{1})
\geq
\frac{2(1-\sqrt{3})}{3}.
\]
Since
\[
\operatorname{Ric}^{M}_{\mathcal V}(V_{1})=2
\]
and
\[
2>
\frac{2(1-\sqrt{3})}{3},
\]
we obtain
\[
\boxed{
\operatorname{Ric}^{M}_{\mathcal V}(V_{1})
>
\frac{2(1-\sqrt{3})}{3}.
}
\]
Hence, the Hineva inequality is also strict.

Therefore, this example provides a statistical submersion for which
equality is not attained in either inequality. More precisely,
\[
\boxed{
\operatorname{Ric}^{M}_{\mathcal V}(V_{1})
<
\frac18
\left(
\|\operatorname{trace}T\|^{2}
+
\|\operatorname{trace}T^{*}\|^{2}
\right)
}
\]
and
\[
\boxed{
\operatorname{Ric}^{M}_{\mathcal V}(V_{1})
>
\mathcal{T}+\mathcal{T}^{*}.
}
\]
Thus, the Chen--Ricci upper bound and the Hineva lower bound are both
strict in this example.
\end{example}

\section{Chen--Ricci Inequality along the Horizontal Distribution}

In this section, we establish a Chen--Ricci inequality for statistical
submersions along the horizontal distribution.

\begin{theorem}
Let
\[
F:(M,\nabla ,g_{1})\longrightarrow (N,\nabla ^2,g_{2})
\]
be a statistical submersion between two statistical manifolds. Then, for
every unit horizontal vector $h_{1}\in\mathcal H_{p}$,
\begin{align}
\operatorname{Ric}_{\mathcal H}^{M}(h_{1})
\leq{}&
\operatorname{Ric}^{\mathcal H}(h_{1})
+\frac14\|\operatorname{trace}\mathcal A\|^{2}
+\frac18\sum_{\alpha=1}^{r}\sum_{j=2}^{s}
(\mathcal A_{1j}^{*\alpha})^{2}
\nonumber\\
&-2\sum_{\alpha=1}^{r}\sum_{j=2}^{s}
\left(
\mathcal A_{1j}^{\alpha}
+\frac14\mathcal A_{1j}^{*\alpha}
\right)^{2}.
\label{eq-CR-SS}
\end{align}
Consequently,
\begin{equation}
\operatorname{Ric}_{\mathcal H}^{M}(h_{1})
\leq
\operatorname{Ric}^{\mathcal H}(h_{1})
+\frac14\|\operatorname{trace}\mathcal A\|^{2}
+\frac18\sum_{\alpha=1}^{r}\sum_{j=2}^{s}
(\mathcal A_{1j}^{*\alpha})^{2}.
\label{eq-CR-SS-1}
\end{equation}
Equality in \eqref{eq-CR-SS-1} holds if and only if
\[
\mathcal A_{11}^{\alpha}
=
\sum_{j=2}^{s}\mathcal A_{jj}^{\alpha},
\qquad
\mathcal A_{1j}^{\alpha}
=
-\frac14\mathcal A_{1j}^{*\alpha},
\]
for every $\alpha=1,\ldots,r$ and $j=2,\ldots,s$.
\end{theorem}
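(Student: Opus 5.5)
The plan is to mimic the proof of Theorem~\ref{Theorem-CR-vert}, now starting from the horizontal relation \eqref{eq-Gauss-hor} instead of the statistical Gauss equation.

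\emph{Step 1: Ricci identity in components.} Take $U_1=U_4=h_1$ and $U_2=U_3=h_j$ in \eqref{eq-Gauss-hor} and sum over $j=2,\ldots,s$. This gives
\[
\operatorname{Ric}^{M}_{\mathcal H}(h_1)=\operatorname{Ric}^{\mathcal H}(h_1)+\sum_{j=2}^{s}\Big[-g_1(A_{h_j}h_j,A^{*}_{h_1}h_1)+g_1\big((A_{h_1}+A^{*}_{h_1})h_j,A^{*}_{h_j}h_1\big)+g_1(A_{h_1}h_j,A^{*}_{h_j}h_1)\Big].
\]
Next, expand each term in the vertical frame $\{V_\alpha\}$ using \eqref{eq-A-components}. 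The relation \eqref{eq-A-Astar}, $A_XY=-A^{*}_YX$, lets us rewrite $A^{*}_{h_j}h_1=-A_{h_1}h_j$, with components $-A^{\alpha}_{1j}$. The outcome is an expression of the form
\[
\sum_{\alpha}\Big(c_1A^{\alpha}_{11}\sum_{j\ge2}A^{\alpha}_{jj}\Big)+\sum_{\alpha}\sum_{j\ge2}\Big(c_2(A^{\alpha}_{1j})^2+c_3A^{\alpha}_{1j}A^{*\alpha}_{1j}\Big).
\]
Here the constants $c_1,c_2,c_3$ come from the bookkeeping. I would also use \eqref{eq-A-Astar} to convert every diagonal $A^{*}$ term into $A$ terms, so that only $\operatorname{trace}\mathcal A$ survives in the diagonal part.

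\emph{Step 2: diagonal part.} Apply $d_1d_2\le (d_1+d_2)^2/4$ with $d_1=A^{\alpha}_{11}$ and $d_2=\sum_{j\ge2}A^{\alpha}_{jj}$, exactly as in \eqref{eq-CR-ineq-1}. Summing over $\alpha$ gives a term of the form $c\,\|\operatorname{trace}\mathcal A\|^2$, by \eqref{eq-nrm-A}. Matching the coefficient $\tfrac14$ in \eqref{eq-CR-SS} requires the product coefficient to be $c_1=1$.

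\emph{Step 3: off-diagonal part.} Complete the square in the off-diagonal terms:
\[
-2(A^{\alpha}_{1j})^2-A^{\alpha}_{1j}A^{*\alpha}_{1j}=-2\Big(A^{\alpha}_{1j}+\tfrac14A^{*\alpha}_{1j}\Big)^2+\tfrac18(A^{*\alpha}_{1j})^2.
\]
This yields \eqref{eq-CR-SS} precisely. Dropping the nonpositive square then gives \eqref{eq-CR-SS-1}.

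\emph{Equality.} Equality in \eqref{eq-CR-SS-1} forces equality in the AM--GM step, which gives $A^{\alpha}_{11}=\sum_{j\ge2}A^{\alpha}_{jj}$. It also forces the discarded square to vanish, which gives $A^{\alpha}_{1j}=-\tfrac14A^{*\alpha}_{1j}$.

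\emph{Main obstacle.} The hard part is Step 1: getting the exact coefficients $c_1=1$, $c_2=-2$, $c_3=-1$. This depends on carefully tracking signs through \eqref{eq-dual-A} and \eqref{eq-A-Astar}, and on how the middle term $g_1((A_{h_1}+A^{*}_{h_1})h_j,A^{*}_{h_j}h_1)$ splits into $AA$ and $AA^{*}$ pieces. I would do this component by component and check the result against \eqref{eq-horizontal-scalar-relation}, whose coefficients $\tfrac12\|\sigma\|^2-\tfrac12 g_1(A,A^{*})-\|A\|^2$ should arise after summing over $h_1$. If the constants come out differently, the completed square in Step 3 has to be adjusted to match.
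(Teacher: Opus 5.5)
Your proposal is correct and matches the paper's proof: the paper asserts the same component identity from \eqref{eq-Gauss-hor} with $c_1=1$, $c_2=-2$, $c_3=-1$, completes the same square, and applies the same AM--GM step (in the opposite order), with the same equality analysis. The bookkeeping you flag as the main obstacle works out as you predict: $A^{*\alpha}_{11}=-A^{\alpha}_{11}$ turns the first term into $+A^{\alpha}_{11}A^{\alpha}_{jj}$, while $A^{*\alpha}_{j1}=-A^{\alpha}_{1j}$ makes the middle term $-(A^{\alpha}_{1j})^2-A^{\alpha}_{1j}A^{*\alpha}_{1j}$ and the last term $-(A^{\alpha}_{1j})^2$.
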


\begin{proof}
From \eqref{eq-Gauss-hor} and the definition of the horizontal Ricci
curvature \eqref{eq-Ric-hor}, we obtain
\begin{align}
\operatorname{Ric}_{\mathcal H}^{M}(h_{1})
={}&
\operatorname{Ric}^{\mathcal H}(h_{1})
+\sum_{\alpha=1}^{r}
\mathcal A_{11}^{\alpha}
\sum_{j=2}^{s}\mathcal A_{jj}^{\alpha}
\nonumber\\
&-2\sum_{\alpha=1}^{r}\sum_{j=2}^{s}
(\mathcal A_{1j}^{\alpha})^{2}
-\sum_{\alpha=1}^{r}\sum_{j=2}^{s}
\mathcal A_{1j}^{\alpha}\mathcal A_{1j}^{*\alpha}.
\label{eq-hor-1}
\end{align}
Using the identity
\[
-2a^{2}-ab
=
\frac18b^{2}
-2\left(a+\frac14b\right)^{2},
\]
equation \eqref{eq-hor-1} becomes
\begin{align}
\operatorname{Ric}_{\mathcal H}^{M}(h_{1})
={}&
\operatorname{Ric}^{\mathcal H}(h_{1})
+\sum_{\alpha=1}^{r}
\mathcal A_{11}^{\alpha}
\sum_{j=2}^{s}\mathcal A_{jj}^{\alpha}
\nonumber\\
&+\frac18\sum_{\alpha=1}^{r}\sum_{j=2}^{s}
(\mathcal A_{1j}^{*\alpha})^{2}
\nonumber\\
&-2\sum_{\alpha=1}^{r}\sum_{j=2}^{s}
\left(
\mathcal A_{1j}^{\alpha}
+\frac14\mathcal A_{1j}^{*\alpha}
\right)^{2}.
\label{eq-hor-3}
\end{align}
Next, applying the elementary inequality
\[
d_{1}d_{2}
\leq
\frac{(d_{1}+d_{2})^{2}}{4},
\]
we obtain
\begin{equation}
\mathcal A_{11}^{\alpha}
\sum_{j=2}^{s}\mathcal A_{jj}^{\alpha}
\leq
\frac14
\left(
\mathcal A_{11}^{\alpha}
+\sum_{j=2}^{s}\mathcal A_{jj}^{\alpha}
\right)^{2}.
\label{eq-hor-4}
\end{equation}
Substituting \eqref{eq-hor-4} into \eqref{eq-hor-3} and using
\eqref{eq-nrm-A}, we arrive at
\begin{align}
\operatorname{Ric}_{\mathcal H}^{M}(h_{1})
\leq{}&
\operatorname{Ric}^{\mathcal H}(h_{1})
+\frac14\|\operatorname{trace}\mathcal A\|^{2}
+\frac18\sum_{\alpha=1}^{r}\sum_{j=2}^{s}
(\mathcal A_{1j}^{*\alpha})^{2}
\nonumber\\
&-2\sum_{\alpha=1}^{r}\sum_{j=2}^{s}
\left(
\mathcal A_{1j}^{\alpha}
+\frac14\mathcal A_{1j}^{*\alpha}
\right)^{2},
\label{eq-hor-5}
\end{align}
which proves \eqref{eq-CR-SS}. Since the last term in \eqref{eq-hor-5} is non-positive, inequality
\eqref{eq-CR-SS-1} follows immediately.

Finally, equality in \eqref{eq-CR-SS-1} holds if and only if equality is
attained in \eqref{eq-hor-4} and the squared term in
\eqref{eq-hor-5} vanishes identically. Hence
\[
\mathcal A_{11}^{\alpha}
=
\sum_{j=2}^{s}\mathcal A_{jj}^{\alpha},
\qquad
\mathcal A_{1j}^{\alpha}
=
-\frac14\mathcal A_{1j}^{*\alpha},
\]
for every $\alpha=1,\ldots,r$ and $j=2,\ldots,s$. This completes the
proof.
\end{proof}

\section{Chen-Ricci inequality along mixed distribution}

In this section, we give an alternative proof of the Chen–Ricci inequality based on a simple and direct argument

\begin{theorem}\cite{SiddiquiChenSiddiqi2021}
Let $F:\left( M,\nabla ,g_{1}\right) \rightarrow \left( N,\nabla
^{2},g_{2}\right) $ be a statistical submersion between two statistical
manifolds. Then for any unit vector $V_{1}\in {\cal V}_{p}$, we have%
\begin{eqnarray*}
&&Ric_{{\cal V}}^{M}\left( V_{1}\right) +\sum_{i=1}^{s}Ric_{{\cal V}%
}^{M}\left( h_{i}\right) \leq Ric^{{\cal V}}\left( V_{1}\right)
+2Ric^{0M}\left( p\right) -2Ric^{0{\cal V}}\left( p\right) \\
&&+\frac{1}{8}\left( \left\Vert {\rm trace}T\right\Vert ^{2}+\left\Vert {\rm %
trace}T^{\ast }\right\Vert ^{2}\right) +\tau ^{{\cal H}}+\frac{1}{2}%
\left\Vert \sigma \right\Vert ^{2}-\frac{1}{2}g\left( {\cal A},{\cal A}%
^{\ast }\right) \\
&&-\left\Vert {\cal A}\right\Vert ^{2}-\delta \left( N\right) -\delta ^{\ast
}\left( N^{\ast }\right) -\delta \left( \sigma \right) -\delta ^{\ast
}\left( \sigma ^{\ast }\right) .
\end{eqnarray*}%
The equality case follows Theorem\ref{Theorem-CR-vert}.
\end{theorem}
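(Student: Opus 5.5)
The plan is to write the left-hand side as a vertical piece plus a mixed piece, bound the vertical piece with Theorem~\ref{Theorem-CR-vert}, and identify the mixed piece exactly through the scalar-curvature decomposition. No new algebraic estimate should be needed: the only inequality comes from Theorem~\ref{Theorem-CR-vert}, so the equality case is inherited from it.

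\textbf{Step 1 (vertical term).} By Theorem~\ref{Theorem-CR-vert}, applied at $p$ to the unit vector $V_1$,
\[
\operatorname{Ric}^{M}_{\mathcal V}(V_1)\le \operatorname{Ric}^{\mathcal V}(V_1)+2\operatorname{Ric}^{0M}-2\operatorname{Ric}^{0\mathcal V}+\tfrac18\left(\|\operatorname{trace}T\|^2+\|\operatorname{trace}T^*\|^2\right).
\]
Here $\operatorname{Ric}^{0M}(p)$ and $\operatorname{Ric}^{0\mathcal V}(p)$ are read as the Levi--Civita quantities evaluated at $V_1$. This reading matches identity \eqref{eq-CR-ineq-4}, which already absorbs the $T^{0}$ terms.

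\textbf{Step 2 (mixed term).} The second summand $\sum_{i=1}^{s}\operatorname{Ric}^{M}_{\mathcal V}(h_i)$ is exactly $\operatorname{Ric}^{\mathrm{mix}}(M)$ by \eqref{eq-Ric-mix}. To evaluate it I would:
\begin{itemize}
\item sum the mixed relation \eqref{eq-Gauss-mix-1} over $i$ and $j$ with $U_1=U_2=h_i$ and $F_1=F_2=V_j$;
\item integrate the covariant-derivative terms $(\nabla_{h_i}T)_{V_j}h_i$ and $(\nabla_{V_j}A)_{h_i}h_i$ by parts, so that their traces become the divergences $\delta(N)$ and $\delta(\sigma)$ of \eqref{eq-delta-N} and \eqref{eq-delta-sigma};
\item repeat the same computation with $\nabla^{*}$ to produce $\delta^{*}(N^{*})$ and $\delta^{*}(\sigma^{*})$, and average the two versions as is customary for statistical submersions (the $\nabla^{*}$ curvature $R^{*}$ satisfies $g_1(R^{*}(X,Y)Z,W)=-g_1(Z,R(X,Y)W)$);
\item combine the result with the horizontal scalar identity \eqref{eq-horizontal-scalar-relation}.
\end{itemize}
The target of this step is the equation
\[
\sum_{i=1}^{s}\operatorname{Ric}^{M}_{\mathcal V}(h_i)=\tau^{\mathcal H}+\tfrac12\|\sigma\|^2-\tfrac12g_1(A,A^*)-\|A\|^2-\delta(N)-\delta^*(N^*)-\delta(\sigma)-\delta^*(\sigma^*),
\]
whose right-hand side is $\tau^{\mathcal H}+\varsigma$ in the notation fixed in Section~2. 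In other words, the mixed Ricci sum is assembled from the horizontal sectional sums plus the divergence corrections, as in the statistical version of O'Neill's scalar curvature formula; it could alternatively be quoted from \cite{SiddiquiChenSiddiqi2021}, where this decomposition is used.

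\textbf{Step 3 (combine and equality case).} Adding the inequality of Step~1 to the equation of Step~2 gives the stated inequality. Step~2 is an identity, so equality holds exactly when equality holds in Theorem~\ref{Theorem-CR-vert}, which gives the asserted equality case.

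\textbf{Main obstacle.} The hard part is Step~2, specifically obtaining $\tau^{\mathcal H}+\varsigma$ exactly with the right coefficients. The quadratic $T$-terms $g_1(T_{V_j}h_i,T_{V_j}h_i)$ and the quadratic $A$-terms must either cancel against terms from the $\nabla^{*}$ version or be absorbed into $\|A\|^2$, $g_1(A,A^*)$ and $\|\sigma\|^2$. I would first try the averaging of the $\nabla$ and $\nabla^{*}$ sums together with the duality relations \eqref{eq-dual-T}, \eqref{eq-dual-A} and \eqref{eq-A-Astar}. If exact cancellation fails, I would fall back on quoting the scalar-curvature decomposition from \cite{SiddiquiChenSiddiqi2021} directly.
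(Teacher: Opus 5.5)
Your Step~2 fails, and no variant of it can be made to work, because the identity you aim for,
\[
\sum_{i=1}^{s}\operatorname{Ric}^{M}_{\mathcal V}(h_i)=\tau^{\mathcal H}+\tfrac12\|\sigma\|^{2}-\tfrac12 g_1(A,A^{*})-\|A\|^{2}-\delta(N)-\delta^{*}(N^{*})-\delta(\sigma)-\delta^{*}(\sigma^{*}),
\]
is false. By \eqref{eq-horizontal-scalar-relation}, the block $\tau^{\mathcal H}+\frac12\|\sigma\|^{2}-\frac12 g_1(A,A^{*})-\|A\|^{2}$ is exactly the horizontal sectional sum $\sum_{1\le i<j\le s}R^{M}(h_i,h_j,h_j,h_i)$. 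It contains the base curvature $\tau^{\mathcal H}$, which the mixed sectional curvatures do not see.

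Concretely, take the product projection $\mathbb{H}^{2}\times\mathbb{R}^{2}\to\mathbb{H}^{2}$ with $\nabla=\nabla^{*}=\nabla^{0}$ on both spaces. This is a statistical submersion, since $\nabla^{0}$ of basic fields projects to the base Levi--Civita connection. Here $T,T^{*},A,A^{*}$ vanish, hence so do $N,N^{*},\sigma,\sigma^{*}$ and all four divergences, and every mixed sectional curvature is zero. So your left side is $0$. By \eqref{eq-Gauss-hor} with $A=A^{*}=0$, your right side is $\tau^{\mathcal H}=R^{M}(h_1,h_2,h_2,h_1)=-1$.

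The same example refutes the displayed inequality itself. There $\operatorname{Ric}^{M}_{\mathcal V}(V_1)=0$ (flat fibres), every term on the right except $\tau^{\mathcal H}$ vanishes, and one would need $0\le -1$. So neither averaging over $\nabla,\nabla^{*}$ nor quoting a decomposition from the literature can close Step~2.

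Your hope in the ``main obstacle'' paragraph that the quadratic terms cancel also fails. For the Hopf fibration $S^{3}\to S^{2}(1/2)$ with Levi--Civita connections, $N=\sigma=0$, so all divergences vanish. Yet by \eqref{eq-Gauss-mix}, $\sum_{i}\operatorname{Ric}^{M}_{\mathcal V}(h_i)=\sum_i g_1(A_{h_i}V_1,A^{*}_{h_i}V_1)=2$, while your right side equals $4-1-2=1$.

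For comparison, the paper does not try to extract the $\tau^{\mathcal H}$-block from the mixed sum. It starts from the splitting \eqref{eq-mix-2}, whose left side $\tau^{M}-\sum_{2\le i<j\le r}R^{M}(V_i,V_j,V_j,V_i)$ equals
\[
\operatorname{Ric}^{M}_{\mathcal V}(V_1)+\sum_{1\le i<j\le s}R^{M}(h_i,h_j,h_j,h_i)+\sum_{i}\operatorname{Ric}^{M}_{\mathcal V}(h_i).
\]
The $\tau^{\mathcal H}$-block is produced by the horizontal pairs via \eqref{eq-horizontal-scalar-relation}, and only the divergence terms are assigned to the mixed pairs. The inequalities used are exactly those of Theorem~\ref{Theorem-CR-vert}, as in your Steps~1 and~3.

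However, the paper's final passage from \eqref{eq-mix-7.4} to the stated form replaces that left side by $\operatorname{Ric}^{M}_{\mathcal V}(V_1)+\sum_{i}\operatorname{Ric}^{M}_{\mathcal V}(h_i)$. It silently discards $\sum_{1\le i<j\le s}R^{M}(h_i,h_j,h_j,h_i)$ while keeping its value on the right. This amounts to your Step~2, so the paper's argument breaks at the same place. Its assignment of the mixed sum to the divergences alone is also contradicted by the Hopf example.

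A correct version must keep the horizontal pairs on the left (equivalently, remove the $\tau^{\mathcal H}$-block from the right). It must also estimate the mixed sum from the full mixed curvature relation, with the quadratic $A,A^{*},T,T^{*}$ terms included.
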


\noindent {\bf Proof. }we have
the scalar curvature $\tau ^{M}$ of $M$ is%
\begin{equation}
\tau ^{M}=\sum_{1\leq i<j\leq r}R^{M}\left( V_{i},V_{j},V_{j},V_{i}\right)
+\sum_{1\leq i<j\leq s}R^{M}\left( h_{i},h_{j},h_{j},h_{i}\right)
+\sum_{i=1}^{s}\sum_{j=1}^{r}R^{M}\left( h_{i},V_{j},V_{j},h_{i}\right) .
\label{eq-mix-1}
\end{equation}%
We can rewrite (\ref{eq-mix-1}) as%
\begin{eqnarray}
&&\tau ^{M}-\sum_{2\leq i<j\leq r}R^{M}\left( V_{i},V_{j},V_{j},V_{i}\right)
\nonumber \\
&=&\sum_{j=2}^{r}R^{M}\left( V_{1},V_{j},V_{j},V_{1}\right) +\sum_{1\leq
i<j\leq s}R^{M}\left( h_{i},h_{j},h_{j},h_{i}\right)
+\sum_{i=1}^{s}\sum_{j=1}^{r}R^{M}\left( h_{i},V_{j},V_{j},h_{i}\right) .
\label{eq-mix-2}
\end{eqnarray}%
From (\ref{eq-abc}), we have%
\begin{eqnarray}
Ric_{{\cal V}}^{M}\left( V_{1}\right) &=&Ric^{{\cal V}}\left( V_{1}\right)
+2\sum_{\alpha =1}^{s}\sum_{j=2}^{r}\left( T_{1j}^{0\alpha }\right)
^{2}-2\sum_{\alpha =1}^{s}\left( T_{11}^{0\alpha
}\sum_{j=2}^{r}T_{jj}^{0\alpha }\right) +\frac{1}{2}\sum_{\alpha
=1}^{s}\left( T_{11}^{\alpha }\sum_{j=2}^{r}T_{jj}^{\alpha }\right) 
\nonumber \\
&&-\frac{1}{2}\sum_{\alpha =1}^{s}\sum_{j=2}^{r}\left( T_{1j}^{\alpha
}\right) ^{2}+\frac{1}{2}\sum_{\alpha =1}^{s}\left( T_{11}^{\ast \alpha
}\sum_{j=2}^{r}T_{jj}^{\ast \alpha }\right) -\frac{1}{2}\sum_{\alpha
=1}^{s}\sum_{j=2}^{r}\left( T_{1j}^{\ast \alpha }\right) ^{2}  \label{Ric-v}
\end{eqnarray}%
From (\ref{eq-Gauss-hor}), we obtain%
\begin{eqnarray}
\sum_{j=2}^{s}R^{M}\left( h_{1},h_{j},h_{j},h_{1}\right) &=&\sum_{j=2}^{s}R^{%
{\cal H}}\left( h_{1},h_{j},h_{j},h_{1}\right) -\sum_{j=2}^{s}g\left( {\cal A%
}_{h_{j}}h_{j},{\cal A}_{h_{1}}^{\ast }h_{1}\right)  \nonumber \\
&&+\sum_{j=2}^{s}g\left( \left( {\cal A}_{h_{1}}+{\cal A}_{h_{1}}^{\ast
}\right) h_{j},{\cal A}_{h_{j}}^{\ast }h_{1}\right) +\sum_{j=2}^{s}g\left( 
{\cal A}_{h_{1}}h_{j},{\cal A}_{h_{j}}^{\ast }h_{1}\right)
\label{eq-Ric-hor-Aij}
\end{eqnarray}%
Using (\ref{eq-Ric-hor}) and (\ref{eq-A-components}) in (\ref{eq-Ric-hor-Aij}), we
obtain%
\begin{equation}
\sum_{1\leq i<j\leq s}R^{M}\left( h_{i},h_{j},h_{j},h_{i}\right) =\tau ^{%
{\cal H}}+\frac{1}{2}\left\Vert \sigma \right\Vert ^{2}-\frac{1}{2}g\left( 
{\cal A},{\cal A}^{\ast }\right) -\left\Vert {\cal A}\right\Vert ^{2}
\label{Ric-H}
\end{equation}%
Using (\ref{Ric-v}), (\ref{Ric-H}) and (\ref{eq-Gauss-mix}) in (\ref{eq-mix-2}),
we obtain%
\begin{eqnarray}
&&\tau ^{M}-\sum_{2\leq i<j\leq r}R^{M}\left( V_{i},V_{j},V_{j},V_{i}\right)
=Ric^{{\cal V}}\left( V_{1}\right) +2\sum_{\alpha
=1}^{s}\sum_{j=2}^{r}\left( T_{1j}^{0\alpha }\right) ^{2}-2\sum_{\alpha
=1}^{s}\left( T_{11}^{0\alpha }\sum_{j=2}^{r}T_{jj}^{0\alpha }\right) 
\nonumber \\
&&+\frac{1}{2}\sum_{\alpha =1}^{s}\left( T_{11}^{\alpha
}\sum_{j=2}^{r}T_{jj}^{\alpha }\right) -\frac{1}{2}\sum_{\alpha
=1}^{s}\sum_{j=2}^{r}\left( T_{1j}^{\alpha }\right) ^{2}+\frac{1}{2}%
\sum_{\alpha =1}^{s}\left( T_{11}^{\ast \alpha }\sum_{j=2}^{r}T_{jj}^{\ast
\alpha }\right) -\frac{1}{2}\sum_{\alpha =1}^{s}\sum_{j=2}^{r}\left(
T_{1j}^{\ast \alpha }\right) ^{2}  \nonumber \\
&&+\tau ^{{\cal H}}+\frac{1}{2}\left\Vert \sigma \right\Vert ^{2}-\frac{1}{2}%
g\left( {\cal A},{\cal A}^{\ast }\right) -\left\Vert {\cal A}\right\Vert
^{2}-\delta \left( N\right) -\delta ^{\ast }\left( N^{\ast }\right) -\delta
\left( \sigma \right) -\delta ^{\ast }\left( \sigma ^{\ast }\right) .
\label{eq-mix-3}
\end{eqnarray}%
Thus, we have%
\begin{eqnarray}
&&\tau ^{M}-\sum_{2\leq i<j\leq r}R^{M}\left( V_{i},V_{j},V_{j},V_{i}\right)
\leq Ric^{{\cal V}}\left( V_{1}\right) +2\sum_{\alpha
=1}^{s}\sum_{j=2}^{r}\left( T_{1j}^{0\alpha }\right) ^{2}-2\sum_{\alpha
=1}^{s}\left( T_{11}^{0\alpha }\sum_{j=2}^{r}T_{jj}^{0\alpha }\right) 
\nonumber \\
&&+\frac{1}{2}\sum_{\alpha =1}^{s}\left( T_{11}^{\alpha
}\sum_{j=2}^{r}T_{jj}^{\alpha }\right) +\frac{1}{2}\sum_{\alpha
=1}^{s}\left( T_{11}^{\ast \alpha }\sum_{j=2}^{r}T_{jj}^{\ast \alpha
}\right) +\tau ^{{\cal H}}+\frac{1}{2}\left\Vert \sigma \right\Vert ^{2}-%
\frac{1}{2}g\left( {\cal A},{\cal A}^{\ast }\right) -\left\Vert {\cal A}%
\right\Vert ^{2}  \nonumber \\
&&-\delta \left( N\right) -\delta ^{\ast }\left( N^{\ast }\right) -\delta
\left( \sigma \right) -\delta ^{\ast }\left( \sigma ^{\ast }\right) .
\label{eq-mix-4}
\end{eqnarray}%
we now use the elementary inequality%
\[
d_{1}d_{2}\leq \frac{\left( d_{1}+d_{2}\right) ^{2}}{4} 
\]%
with equality if and only if $d_{1}=d_{2}$, Suppose $d_{1}=T_{11}^{\alpha }$
and $d_{2}=\sum_{j=2}^{r}T_{jj}^{\alpha }$, and similarly for the dual
components, we obtain%
\begin{eqnarray}
&&\tau ^{M}-\sum_{2\leq i<j\leq r}R^{M}\left( V_{i},V_{j},V_{j},V_{i}\right)
\leq Ric^{{\cal V}}\left( V_{1}\right) +2\sum_{\alpha
=1}^{s}\sum_{j=2}^{r}\left( T_{1j}^{0\alpha }\right) ^{2}-2\sum_{\alpha
=1}^{s}\left( T_{11}^{0\alpha }\sum_{j=2}^{r}T_{jj}^{0\alpha }\right) 
\nonumber \\
&&+\frac{1}{8}\sum_{\alpha =1}^{s}\left( T_{11}^{\alpha
}+\sum_{j=2}^{r}T_{jj}^{\alpha }\right) ^{2}+\frac{1}{8}\sum_{\alpha
=1}^{s}\left( T_{11}^{\ast \alpha }+\sum_{j=2}^{r}T_{jj}^{\ast \alpha
}\right) ^{2}+\tau ^{{\cal H}}+\frac{1}{2}\left\Vert \sigma \right\Vert ^{2}
\nonumber \\
&&-\frac{1}{2}g\left( {\cal A},{\cal A}^{\ast }\right) -\left\Vert {\cal A}%
\right\Vert ^{2}-\delta \left( N\right) -\delta ^{\ast }\left( N^{\ast
}\right) -\delta \left( \sigma \right) -\delta ^{\ast }\left( \sigma ^{\ast
}\right) .  \label{eq-mix-5}
\end{eqnarray}%
Using (\ref{eq-trace-T}) in (\ref{eq-mix-5}), we obtain%
\begin{eqnarray}
&&\tau ^{M}-\sum_{2\leq i<j\leq r}R^{M}\left( V_{i},V_{j},V_{j},V_{i}\right)
\leq Ric^{{\cal V}}\left( V_{1}\right) +2\sum_{\alpha
=1}^{s}\sum_{j=2}^{r}\left( T_{1j}^{0\alpha }\right) ^{2}  \nonumber \\
&&-2\sum_{\alpha =1}^{s}\left( T_{11}^{0\alpha
}\sum_{j=2}^{r}T_{jj}^{0\alpha }\right) +\frac{1}{8}\left( \left\Vert {\rm %
trace}T\right\Vert ^{2}+\left\Vert {\rm trace}T^{\ast }\right\Vert
^{2}\right) +\tau ^{{\cal H}}  \nonumber \\
&&+\frac{1}{2}\left\Vert \sigma \right\Vert ^{2}-\frac{1}{2}g\left( {\cal A},%
{\cal A}^{\ast }\right) -\left\Vert {\cal A}\right\Vert ^{2}-\delta \left(
N\right) -\delta ^{\ast }\left( N^{\ast }\right) -\delta \left( \sigma
\right) -\delta ^{\ast }\left( \sigma ^{\ast }\right) .  \label{eq-mix-6}
\end{eqnarray}%
From (\ref{eq-CR-ineq-4}), we have%
\begin{equation}
2Ric^{0M}\left( p\right) -2Ric^{0{\cal V}}\left( p\right) =2\sum_{\alpha
=1}^{s}\sum_{j=2}^{r}\left( T_{1j}^{0\alpha }\right) ^{2}-2\sum_{\alpha
=1}^{s}\left( T_{11}^{0\alpha }\sum_{j=2}^{r}T_{jj}^{0\alpha }\right)
\label{eq-mix-7}
\end{equation}%
Using (\ref{eq-mix-7}) in (\ref{eq-mix-6}), we obtain%
\begin{eqnarray}
&&\tau ^{M}-\sum_{2\leq i<j\leq r}R^{M}\left( V_{i},V_{j},V_{j},V_{i}\right)
\leq Ric^{{\cal V}}\left( V_{1}\right) +2Ric^{0M}\left( p\right) -2Ric^{0%
{\cal V}}\left( p\right)  \nonumber \\
&&+\frac{1}{8}\left( \left\Vert {\rm trace}T\right\Vert ^{2}+\left\Vert {\rm %
trace}T^{\ast }\right\Vert ^{2}\right) +\tau ^{{\cal H}}+\frac{1}{2}%
\left\Vert \sigma \right\Vert ^{2}-\frac{1}{2}g\left( {\cal A},{\cal A}%
^{\ast }\right)  \nonumber \\
&&-\left\Vert {\cal A}\right\Vert ^{2}-\delta \left( N\right) -\delta ^{\ast
}\left( N^{\ast }\right) -\delta \left( \sigma \right) -\delta ^{\ast
}\left( \sigma ^{\ast }\right) .  \label{eq-mix-7.4}
\end{eqnarray}%
Using (\ref{eq-Ric-vert}), (\ref{eq-Ric-hor}) and (\ref{eq-Ric-mix}) in (\ref%
{eq-mix-7.4}), we obtain%
\begin{eqnarray}
&&Ric_{{\cal V}}^{M}\left( V_{1}\right) +\sum_{i=1}^{s}Ric_{{\cal V}%
}^{M}\left( h_{i}\right) \leq Ric^{{\cal V}}\left( V_{1}\right)
+2Ric^{0M}\left( p\right) -2Ric^{0{\cal V}}\left( p\right)  \nonumber \\
&&+\frac{1}{8}\left( \left\Vert {\rm trace}T\right\Vert ^{2}+\left\Vert {\rm %
trace}T^{\ast }\right\Vert ^{2}\right) +\tau ^{{\cal H}}+\frac{1}{2}%
\left\Vert \sigma \right\Vert ^{2}-\frac{1}{2}g\left( {\cal A},{\cal A}%
^{\ast }\right)  \nonumber \\
&&-\left\Vert {\cal A}\right\Vert ^{2}-\delta \left( N\right) -\delta ^{\ast
}\left( N^{\ast }\right) -\delta \left( \sigma \right) -\delta ^{\ast
}\left( \sigma ^{\ast }\right) .  \label{eq-mix-8}
\end{eqnarray}%
The equality case follows Theorem\ref{Theorem-CR-vert}.

\subsection{Hineva inequality along mixed distribution}

In this section, we obtain Hineva inequality for statistical submersion
along mixed distribution.

\begin{theorem}
Let $F:\left( M,\nabla ,g_{1}\right) \rightarrow \left( N,\nabla
^{2},g_{2}\right) $ be a statistical submersion between two statistical
manifolds. Then for any unit vector $V_{1}\in {\cal V}_{p}$, we have%
\begin{eqnarray*}
&&Ric_{{\cal V}}^{M}\left( V_{1}\right) +\sum_{i=1}^{s}Ric_{{\cal V}%
}^{M}\left( h_{i}\right) \geq Ric^{{\cal V}}\left( V_{1}\right)
+2Ric^{0M}\left( p\right) -2Ric^{0{\cal V}}\left( p\right) \\
&&+\frac{r-1}{2r^{2}}\left( 2\left\Vert {\rm trace}T\right\Vert
^{2}-r\left\Vert T\right\Vert ^{2}-\left( r-2\right) \left\Vert {\rm trace}%
T\right\Vert \sqrt{\frac{r\left\Vert T\right\Vert ^{2}-\left\Vert {\rm trace}%
T\right\Vert ^{2}}{r-1}}\right) \\
&&+\frac{r-1}{2r^{2}}\left( 2\left\Vert {\rm trace}T^{\ast }\right\Vert
^{2}-r\left\Vert T^{\ast }\right\Vert ^{2}-\left( r-2\right) \left\Vert {\rm %
trace}T^{\ast }\right\Vert \sqrt{\frac{r\left\Vert T^{\ast }\right\Vert
^{2}-\left\Vert {\rm trace}T^{\ast }\right\Vert ^{2}}{r-1}}\right) \\
&&+\tau ^{{\cal H}}+\frac{1}{2}\left\Vert \sigma \right\Vert ^{2}-\frac{1}{2}%
g\left( {\cal A},{\cal A}^{\ast }\right) -\left\Vert {\cal A}\right\Vert
^{2}-\delta \left( N\right) -\delta ^{\ast }\left( N^{\ast }\right) -\delta
\left( \sigma \right) -\delta ^{\ast }\left( \sigma ^{\ast }\right) .
\end{eqnarray*}%
The equality case follows Theorem \ref{Theorem-Hineva-vert}.
\end{theorem}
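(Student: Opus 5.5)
The proof will mirror that of the mixed Chen--Ricci inequality, with the elementary estimate $d_1d_2\le (d_1+d_2)^2/4$ replaced by Lemma~\ref{Hineva}, exactly as Theorem~\ref{Theorem-Hineva-vert} relates to Theorem~\ref{Theorem-CR-vert}.

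\textbf{Step 1: the exact identity.} We start from the scalar curvature decomposition \eqref{eq-mix-1}. We isolate the terms $\sum_{j\ge 2}R^{M}(V_1,V_j,V_j,V_1)$ as in \eqref{eq-mix-2}. We then substitute:
\begin{itemize}
\item the exact vertical identity \eqref{Ric-v}, which is \eqref{eq-abc};
\item the horizontal scalar identity \eqref{Ric-H};
\item the mixed contribution, which produces the divergence terms $-\delta(N)-\delta^{*}(N^{*})-\delta(\sigma)-\delta^{*}(\sigma^{*})$.
\end{itemize}
This gives the equality \eqref{eq-mix-3}. Its only non-intrinsic $T$-terms are the two groups
\[
\frac12\sum_{\alpha=1}^{s}\Big(T^{\alpha}_{11}\sum_{j=2}^{r}T^{\alpha}_{jj}-\sum_{j=2}^{r}(T^{\alpha}_{1j})^{2}\Big)
\quad\text{and}\quad
\frac12\sum_{\alpha=1}^{s}\Big(T^{*\alpha}_{11}\sum_{j=2}^{r}T^{*\alpha}_{jj}-\sum_{j=2}^{r}(T^{*\alpha}_{1j})^{2}\Big),
\]
together with the Levi--Civita terms in $T^{0}$.

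\textbf{Step 2: bounding the $T$-groups from below.} We do not discard the negative squares as in \eqref{eq-mix-4}. Instead, we rewrite each bracket in the form $b_{11}\sum_i b_{ii}-\sum_i b_{1i}^2$ appearing in Lemma~\ref{Hineva}, applied with $k=r$ to $B=(T^{\alpha}_{ij})$ and to $B=(T^{*\alpha}_{ij})$, as in \eqref{eq-Lemma-Hineva-RM} and \eqref{eq-Lemma-Hineva-RM-1}. The $j=1$ terms cancel, since $T^{\alpha}_{11}T^{\alpha}_{11}-(T^{\alpha}_{11})^2=0$, so the brackets agree.

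Summing over $\alpha$ gives lower bounds containing $-(r-2)\sum_\alpha |\operatorname{trace}T^\alpha|\sqrt{\cdot}$. Because this term carries a minus sign, the Cauchy--Schwarz estimates \eqref{eq-cauch-squartz-Hineva-RM} and \eqref{eq-cauch-squartz-Hineva-RM-1} bound it from below by the global quantities. Together with the norm identities \eqref{eq-T-norm-final}--\eqref{eq-traceTstar-norm}, the two contributions become exactly $\zeta$ and $\zeta^{*}$.

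\textbf{Step 3: converting to the stated quantities.} We replace the $T^{0}$-terms by $2Ric^{0M}-2Ric^{0\mathcal V}$ via \eqref{eq-CR-ineq-4} (equivalently \eqref{eq-mix-7}). We then rewrite the left-hand side $\tau^{M}-\sum_{2\le i<j\le r}R^{M}(V_i,V_j,V_j,V_i)$ as $Ric^{M}_{\mathcal V}(V_1)+\sum_i Ric^{M}_{\mathcal V}(h_i)$, precisely as in the passage from \eqref{eq-mix-7.4} to \eqref{eq-mix-8}. This yields the stated inequality, with the horizontal and divergence terms carried unchanged; together they form $\varsigma$.

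\textbf{Step 4: the equality case, and the main obstacle.} The only inequalities used are Lemma~\ref{Hineva} and the two Cauchy--Schwarz estimates. Equality therefore holds if and only if the conditions of Theorem~\ref{Theorem-Hineva-vert} hold: each $(T^{\alpha}_{ij})$ and $(T^{*\alpha}_{ij})$ is $\operatorname{diag}(a_\alpha,b_\alpha,\dots,b_\alpha)$, respectively $\operatorname{diag}(a^{*}_\alpha,b^{*}_\alpha,\dots,b^{*}_\alpha)$, subject to the proportionality relations with $\beta_1,\beta_2$.

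The main obstacle is bookkeeping rather than analysis. We must check that the coefficient $\tfrac12$ in front of each bracket correctly produces the factor $\tfrac{r-1}{2r^2}$. We must also check that the mixed and horizontal terms in \eqref{eq-mix-3} enter as exact identities, so they pass through unchanged and do not affect the equality characterization.
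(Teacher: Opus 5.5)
Your proposal follows the paper's own proof essentially line by line. That argument has a genuine gap, however, and the statement as written is in fact false. The decisive failure is the rewriting of the left-hand side in your Step~3. By \eqref{eq-mix-2} and the definitions \eqref{eq-Ric-vert}, \eqref{eq-Ric-mix},
\[
\tau^{M}-\sum_{2\le i<j\le r}R^{M}(V_i,V_j,V_j,V_i)
=\operatorname{Ric}^{M}_{\mathcal V}(V_1)+\sum_{1\le i<j\le s}R^{M}(h_i,h_j,h_j,h_i)+\sum_{i=1}^{s}\operatorname{Ric}^{M}_{\mathcal V}(h_i).
\]
By \eqref{Ric-H}, the middle sum equals exactly $\tau^{\mathcal H}+\frac12\|\sigma\|^{2}-\frac12 g(\mathcal A,\mathcal A^{*})-\|\mathcal A\|^{2}$, which is the very group you substituted on the right in Step~1. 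Replacing the left side by $\operatorname{Ric}^{M}_{\mathcal V}(V_1)+\sum_i\operatorname{Ric}^{M}_{\mathcal V}(h_i)$ therefore deletes the horizontal sectional sum on the left only. Done consistently, it removes that group from the right as well. Keeping it, as the statement does, is legitimate only if $\sum_{i<j}R^{M}(h_i,h_j,h_j,h_i)\le 0$.

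Concretely, take the projection $S^{2}\times\mathbb R^{r}\to S^{2}$ ($r\ge 2$, unit sphere) with $\nabla=\nabla^{*}=\nabla^{0}$. This is a statistical submersion with $T=T^{*}=0$ and $\mathcal A=\mathcal A^{*}=0$. Every vertical and mixed sectional curvature vanishes, and $\operatorname{Ric}^{\mathcal V}$, $\operatorname{Ric}^{0M}$, $\operatorname{Ric}^{0\mathcal V}$, $\zeta$, $\zeta^{*}$ and all four $\delta$-terms are zero. But $\tau^{\mathcal H}=1$, so the claimed inequality reads $0\ge 1$. (Over the hyperbolic plane the same bookkeeping gives $0\le -1$ in the companion mixed Chen--Ricci bound.)

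Two further ``exact'' identities you rely on in Step~1 also fail. So even the corrected inequality, with the $\tau^{\mathcal H}$-group dropped, would not follow along this route.

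First, the mixed sum $\sum_{i,j}R^{M}(h_i,V_j,V_j,h_i)$ is not $-\delta(N)-\delta^{*}(N^{*})-\delta(\sigma)-\delta^{*}(\sigma^{*})$. Tracing \eqref{eq-Gauss-mix} yields $\sum_{i,j}\bigl[g_1(\mathcal A_{h_i}V_j,\mathcal A^{*}_{h_i}V_j)-g_1(T_{V_j}h_i,T^{*}_{V_j}h_i)\bigr]$, with no divergence terms at all. For $S^{3}\times\mathbb R\to S^{2}(1/2)$ built from the Hopf fibration (with $\nabla=\nabla^{0}$), one has $N=\sigma=0$ while the mixed sum equals $2$.

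Second, \eqref{eq-abc} does not follow from \eqref{eq-Ric-Tij-1}. The term $-\sum_\alpha T^{*\alpha}_{11}\sum_j T^{\alpha}_{jj}$ is not symmetric under $T\leftrightarrow T^{*}$. Rewriting it through $T^{0}$ leaves the remainder $\frac12\sum_\alpha\bigl(T^{\alpha}_{11}\sum_{j\ge2}T^{*\alpha}_{jj}-T^{*\alpha}_{11}\sum_{j\ge2}T^{\alpha}_{jj}\bigr)$, which vanishes only in special cases such as $T^{*}$ proportional to $T$.

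All of these defects are already present in the paper's derivation of \eqref{eq-mix-3} and in its passage from \eqref{eq-mix-7.4} to \eqref{eq-mix-8}, so appealing to those displays does not repair them. A sound version would need three changes:
\begin{enumerate}
\item keep $\tau^{M}-\sum_{2\le i<j\le r}R^{M}(V_i,V_j,V_j,V_i)$ on the left, or drop the $\tau^{\mathcal H}$-group from the right;
\item use the genuinely traced mixed curvature in place of the $\delta$-terms;
\item either carry the antisymmetric remainder or work with the symmetrized curvature $\frac12(R+R^{*})$.
\end{enumerate}
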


\noindent {\bf Proof.} From (\ref{eq-mix-3}), we have%
\begin{eqnarray}
&&\tau ^{M}-\sum_{2\leq i<j\leq r}R^{M}\left( V_{i},V_{j},V_{j},V_{i}\right)
=Ric^{{\cal V}}\left( V_{1}\right) +2\sum_{j=2}^{r}\left( T_{1j}^{0\alpha
}\right) ^{2}-2T_{11}^{0\alpha }\sum_{j=2}^{r}T_{jj}^{0\alpha }  \nonumber \\
&&+\frac{1}{2}T_{11}^{\alpha }\sum_{j=2}^{r}T_{jj}^{\alpha }-\frac{1}{2}%
\sum_{j=2}^{r}\left( T_{1j}^{\alpha }\right) ^{2}+\frac{1}{2}T_{11}^{\ast
\alpha }\sum_{j=2}^{r}T_{jj}^{\ast \alpha }-\frac{1}{2}\sum_{j=2}^{r}\left(
T_{1j}^{\ast \alpha }\right) ^{2}+\tau ^{{\cal H}}  \nonumber \\
&&+\frac{1}{2}\left\Vert \sigma \right\Vert ^{2}-\frac{1}{2}g\left( {\cal A},%
{\cal A}^{\ast }\right) -\left\Vert {\cal A}\right\Vert ^{2}-\delta \left(
N\right) -\delta ^{\ast }\left( N^{\ast }\right) -\delta \left( \sigma
\right) -\delta ^{\ast }\left( \sigma ^{\ast }\right) .  \label{eq-Hin-mix-1}
\end{eqnarray}%
Using (\ref{eq-Lemma-Hineva-RM}) and (\ref{eq-Lemma-Hineva-RM-1}) in (\ref%
{eq-Hin-mix-1}), we obtain%
\begin{eqnarray}
&&\tau ^{M}-\sum_{2\leq i<j\leq r}R^{M}\left( V_{i},V_{j},V_{j},V_{i}\right)
\geq Ric^{{\cal V}}\left( V_{1}\right) +2\sum_{\alpha
=1}^{s}\sum_{j=2}^{r}\left( T_{1j}^{0\alpha }\right) ^{2}-2\sum_{\alpha
=1}^{s}\left( T_{11}^{0\alpha }\sum_{j=2}^{r}T_{jj}^{0\alpha }\right) 
\nonumber \\
&&+\frac{r-1}{2r^{2}}\left( 2\sum_{\alpha =1}^{s}\left(
\sum_{j=1}^{r}T_{jj}^{\alpha }\right) ^{2}-r\sum_{\alpha
=1}^{s}\sum_{i,j=1}^{r}\left( T_{ij}^{\alpha }\right) ^{2}-\left( r-2\right)
\sum_{\alpha =1}^{s}\left\vert \sum_{j=1}^{r}T_{jj}^{\alpha }\right\vert
\right.  \nonumber \\
&&\left. \sqrt{\frac{r\sum_{i,j=1}^{r}\left( T_{ij}^{\alpha }\right)
^{2}-\left( \sum_{j=1}^{r}T_{jj}^{\alpha }\right) ^{2}}{r-1}}\right) +\frac{%
r-1}{2r^{2}}\left( 2\sum_{\alpha =1}^{s}\left( \sum_{j=1}^{r}T_{jj}^{\ast
\alpha }\right) ^{2}\right.  \nonumber \\
&&\left. -r\sum_{\alpha =1}^{s}\sum_{i,j=1}^{r}\left( T_{ij}^{\ast \alpha
}\right) ^{2}-\left( r-2\right) \sum_{\alpha =1}^{s}\left\vert
\sum_{j=1}^{r}T_{jj}^{\ast \alpha }\right\vert \sqrt{\frac{%
r\sum_{i,j=1}^{r}\left( T_{ij}^{\ast \alpha }\right) ^{2}-\left(
\sum_{j=1}^{r}T_{jj}^{\ast \alpha }\right) ^{2}}{r-1}}\right)  \nonumber \\
&&+\tau ^{{\cal H}}+\frac{1}{2}\left\Vert \sigma \right\Vert ^{2}-\frac{1}{2}%
g\left( {\cal A},{\cal A}^{\ast }\right) -\left\Vert {\cal A}\right\Vert
^{2}-\delta \left( N\right) -\delta ^{\ast }\left( N^{\ast }\right) -\delta
\left( \sigma \right) -\delta ^{\ast }\left( \sigma ^{\ast }\right) .
\label{eq-Hin-mix-2}
\end{eqnarray}%
Using (\ref{eq-cauch-squartz-Hineva-RM}) and (\ref%
{eq-cauch-squartz-Hineva-RM-1}) in (\ref{eq-Hin-mix-2}), we obtain%
\begin{eqnarray}
&&\tau ^{M}-\sum_{2\leq i<j\leq r}R^{M}\left( V_{i},V_{j},V_{j},V_{i}\right)
-\sum_{2\leq i<j\leq s}R^{M}\left( h_{i},h_{j},h_{j},h_{i}\right)  \nonumber
\\
&\geq &Ric^{{\cal V}}\left( V_{1}\right) +2\sum_{\alpha
=1}^{s}\sum_{j=2}^{r}\left( T_{1j}^{0\alpha }\right) ^{2}-2\sum_{\alpha
=1}^{s}\left( T_{11}^{0\alpha }\sum_{j=2}^{r}T_{jj}^{0\alpha }\right) +\frac{%
r-1}{2r^{2}}\left( 2\left\Vert {\rm trace}T\right\Vert ^{2}-r\left\Vert
T\right\Vert ^{2}\right.  \nonumber \\
&&\left. -\left( r-2\right) \left\Vert {\rm trace}T\right\Vert \sqrt{\frac{%
r\left\Vert T\right\Vert ^{2}-\left\Vert {\rm trace}T\right\Vert ^{2}}{r-1}}%
\right) .+\frac{r-1}{2r^{2}}\left( 2\left\Vert {\rm trace}T^{\ast
}\right\Vert ^{2}-r\left\Vert T^{\ast }\right\Vert ^{2}\right.  \nonumber \\
&&\left. -\left( r-2\right) \left\Vert {\rm trace}T^{\ast }\right\Vert \sqrt{%
\frac{r\left\Vert T^{\ast }\right\Vert ^{2}-\left\Vert {\rm trace}T^{\ast
}\right\Vert ^{2}}{r-1}}\right) +\tau ^{{\cal H}}+\frac{1}{2}\left\Vert
\sigma \right\Vert ^{2}-\frac{1}{2}g\left( {\cal A},{\cal A}^{\ast }\right) 
\nonumber \\
&&-\left\Vert {\cal A}\right\Vert ^{2}-\delta \left( N\right) -\delta ^{\ast
}\left( N^{\ast }\right) -\delta \left( \sigma \right) -\delta ^{\ast
}\left( \sigma ^{\ast }\right) .  \label{eq-Hin-mix-3}
\end{eqnarray}%
Using (\ref{eq-mix-7}) in (\ref{eq-Hin-mix-3}), we obtain%
\begin{eqnarray}
&&\tau ^{M}-\sum_{2\leq i<j\leq r}R^{M}\left( V_{i},V_{j},V_{j},V_{i}\right)
\geq Ric^{{\cal V}}\left( V_{1}\right) +2Ric^{0M}\left( p\right) -2Ric^{0%
{\cal V}}\left( p\right)  \nonumber \\
&&+\frac{r-1}{2r^{2}}\left( 2\left\Vert {\rm trace}T\right\Vert
^{2}-r\left\Vert T\right\Vert ^{2}-\left( r-2\right) \left\Vert {\rm trace}%
T\right\Vert \sqrt{\frac{r\left\Vert T\right\Vert ^{2}-\left\Vert {\rm trace}%
T\right\Vert ^{2}}{r-1}}\right)  \nonumber \\
&&+\frac{r-1}{2r^{2}}\left( 2\left\Vert {\rm trace}T^{\ast }\right\Vert
^{2}-r\left\Vert T^{\ast }\right\Vert ^{2}-\left( r-2\right) \left\Vert {\rm %
trace}T^{\ast }\right\Vert \sqrt{\frac{r\left\Vert T^{\ast }\right\Vert
^{2}-\left\Vert {\rm trace}T^{\ast }\right\Vert ^{2}}{r-1}}\right)  \nonumber
\\
&&+\tau ^{{\cal H}}+\frac{1}{2}\left\Vert \sigma \right\Vert ^{2}-\frac{1}{2}%
g\left( {\cal A},{\cal A}^{\ast }\right) -\left\Vert {\cal A}\right\Vert
^{2}-\delta \left( N\right) -\delta ^{\ast }\left( N^{\ast }\right) -\delta
\left( \sigma \right) -\delta ^{\ast }\left( \sigma ^{\ast }\right) .
\label{eq-Hin-mix-4}
\end{eqnarray}%
Using (\ref{eq-Ric-vert}), (\ref{eq-Ric-hor}) and (\ref{eq-Ric-mix}) in (\ref%
{eq-Hin-mix-4}), we obtain%
\begin{eqnarray*}
&&Ric_{{\cal V}}^{M}\left( V_{1}\right) +\sum_{i=1}^{s}Ric_{{\cal V}%
}^{M}\left( h_{i}\right) \geq Ric^{{\cal V}}\left( V_{1}\right)
+2Ric^{0M}\left( p\right) -2Ric^{0{\cal V}}\left( p\right) \\
&&+\frac{r-1}{2r^{2}}\left( 2\left\Vert {\rm trace}T\right\Vert
^{2}-r\left\Vert T\right\Vert ^{2}-\left( r-2\right) \left\Vert {\rm trace}%
T\right\Vert \sqrt{\frac{r\left\Vert T\right\Vert ^{2}-\left\Vert {\rm trace}%
T\right\Vert ^{2}}{r-1}}\right) \\
&&+\frac{r-1}{2r^{2}}\left( 2\left\Vert {\rm trace}T^{\ast }\right\Vert
^{2}-r\left\Vert T^{\ast }\right\Vert ^{2}-\left( r-2\right) \left\Vert {\rm %
trace}T^{\ast }\right\Vert \sqrt{\frac{r\left\Vert T^{\ast }\right\Vert
^{2}-\left\Vert {\rm trace}T^{\ast }\right\Vert ^{2}}{r-1}}\right) \\
&&+\tau ^{{\cal H}}+\frac{1}{2}\left\Vert \sigma \right\Vert ^{2}-\frac{1}{2}%
g\left( {\cal A},{\cal A}^{\ast }\right) -\left\Vert {\cal A}\right\Vert
^{2}-\delta \left( N\right) -\delta ^{\ast }\left( N^{\ast }\right) -\delta
\left( \sigma \right) -\delta ^{\ast }\left( \sigma ^{\ast }\right) .
\end{eqnarray*}%
The equality case are similar to Theorem \ref{Theorem-Hineva-vert}.

\subsection{Simultaneous Chen--Ricci and Hineva inequalities for statistical
submersion along mixed distribution}

\begin{theorem}
Let $F:(N_{1},g_{1})\rightarrow (N_{2},g_{2})$ be a Riemannian submersion
between two Riemannian manifolds. Then%
\begin{eqnarray*}
&&\left. Ric_{{\cal V}}^{M}\left( V_{1}\right) +\sum_{i=1}^{s}Ric_{{\cal V}%
}^{M}\left( h_{i}\right) -2Ric^{0M}\left( p\right) +2Ric^{0{\cal V}}\left(
p\right) -\frac{1}{8}\left( \left\Vert {\rm trace}T\right\Vert
^{2}+\left\Vert {\rm trace}T^{\ast }\right\Vert ^{2}\right) -\tau ^{{\cal H}%
}-\varsigma \right.  \\
&&\left. \leq Ric^{{\cal V}}\left( V_{1}\right) \leq Ric_{{\cal V}%
}^{M}\left( V_{1}\right) +\sum_{i=1}^{s}Ric_{{\cal V}}^{M}\left(
h_{i}\right) -2Ric^{0M}\left( p\right) +2Ric^{0{\cal V}}\left( p\right)
-\tau ^{{\cal H}}-\zeta -\zeta ^{\ast }-\varsigma \right. 
\end{eqnarray*}%
The equality case follows Theorem\ref{Th-ineq-both-side-vert}.
\end{theorem}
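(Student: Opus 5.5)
The statement is a rearrangement of the two preceding mixed-distribution theorems, so the plan is to subtract common terms in each and then combine.

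\textbf{Step 1 (left inequality).} Start from the mixed Chen--Ricci theorem:
\[
Ric_{\mathcal V}^{M}(V_1)+\sum_{i=1}^{s}Ric_{\mathcal V}^{M}(h_i)\leq Ric^{\mathcal V}(V_1)+2Ric^{0M}(p)-2Ric^{0\mathcal V}(p)+\tfrac18\bigl(\|\operatorname{trace}T\|^2+\|\operatorname{trace}T^{*}\|^2\bigr)+\tau^{\mathcal H}+\varsigma ,
\]
where the last six terms of that theorem are exactly $\varsigma$ as fixed in the notation subsection. Move every term except $Ric^{\mathcal V}(V_1)$ to the left-hand side. This gives the left inequality of the statement. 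The only care needed is sign bookkeeping, since $\varsigma$ carries the terms $-\delta(N)-\delta^{*}(N^{*})-\delta(\sigma)-\delta^{*}(\sigma^{*})$.

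\textbf{Step 2 (right inequality).} Take the mixed Hineva theorem and recognize its two bracketed $\frac{r-1}{2r^2}(\cdots)$ terms as $\zeta$ and $\zeta^{*}$. It then reads
\[
Ric_{\mathcal V}^{M}(V_1)+\sum_i Ric_{\mathcal V}^{M}(h_i)\geq Ric^{\mathcal V}(V_1)+2Ric^{0M}(p)-2Ric^{0\mathcal V}(p)+\zeta+\zeta^{*}+\tau^{\mathcal H}+\varsigma .
\]
Isolating $Ric^{\mathcal V}(V_1)$ gives the right inequality.

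\textbf{Step 3 (equality case).} Both inequalities were obtained from the single identity \eqref{eq-mix-3} by the same estimates used in the vertical case: the AM--GM step for the upper bound, and Lemma~\ref{Hineva} plus Cauchy--Schwarz for the lower bound. The terms $\tau^{\mathcal H}+\varsigma$ enter as exact equalities through \eqref{Ric-H} and \eqref{eq-Gauss-mix}. Hence simultaneous equality forces exactly the conditions of Theorem~\ref{Th-ineq-both-side-vert} on the matrices $T^{\alpha}$ and $T^{*\alpha}$:
\begin{itemize}
\item a diagonal form with $a_\alpha=\operatorname{trace}T^{\alpha}/2$ and $b_\alpha=\operatorname{trace}T^{\alpha}/(2(r-1))$, together with the analogous formulas for $T^{*\alpha}$;
\item when equality holds for all $V_i$, either totally geodesic fibres or $r=2$ with totally umbilical fibres.
\end{itemize}
We therefore simply invoke that theorem's argument verbatim.

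\textbf{Main obstacle.} The real difficulty is consistency of the stated inequality with the preceding theorems, not the algebra. First, the stated left side subtracts $\tau^{\mathcal H}$ and $\varsigma$ once. Second, \eqref{eq-Hin-mix-3} drops an extra horizontal sum on the left, which must be checked against \eqref{eq-Hin-mix-4}. Third, the Riemannian hypothesis in the statement should really be the statistical one. I would verify, before finishing, that the two theorems are used in precisely the form printed, so that the rearrangement is literal.
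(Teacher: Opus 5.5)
Your Steps 1 and 2 are what the paper intends: it gives no argument for this theorem beyond the two mixed theorems, and your term-moving is algebraically correct. The gap is the check you postpone in your last paragraph. It fails, because the two printed mixed theorems, and hence this statement, are false.

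Take $F:\mathbb{R}^{2}\times S^{2}\to S^{2}$ with the product metric and $\nabla=\nabla^{*}=\nabla^{0}$. This is admissible both as a statistical submersion and under the Riemannian hypothesis as printed. Then $T=T^{*}=0$ and $A=A^{*}=0$, so $\zeta=\zeta^{*}=\varsigma=0$. Also $Ric^{M}_{\mathcal V}(V_1)$, $\sum_{i}Ric^{M}_{\mathcal V}(h_i)$, $Ric^{\mathcal V}(V_1)$, $Ric^{0M}$ and $Ric^{0\mathcal V}$ all vanish, whereas $\tau^{\mathcal H}=1$. The right-hand inequality of the statement becomes $0\leq -1$, and the mixed Hineva theorem becomes $0\geq 1$. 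With the hyperbolic plane $H^{2}$ in place of $S^{2}$, the left-hand inequality becomes $1\leq 0$, and the mixed Chen--Ricci theorem becomes $0\leq -1$. So no rearrangement of those theorems can prove the statement.

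The error sits in the last step of both mixed proofs. By \eqref{eq-mix-2}, the left side of \eqref{eq-mix-7.4} and of \eqref{eq-Hin-mix-4} equals
\[
Ric^{M}_{\mathcal V}(V_1)+\sum_{1\leq i<j\leq s}R^{M}(h_i,h_j,h_j,h_i)+\sum_{i=1}^{s}Ric^{M}_{\mathcal V}(h_i).
\]
The middle sum is then discarded. Its value $\tau^{\mathcal H}+\frac12\|\sigma\|^{2}-\frac12 g_1(A,A^{*})-\|A\|^{2}$ from \eqref{Ric-H}, however, remains on the right. The horizontal term you noticed appearing and disappearing between \eqref{eq-Hin-mix-3} and \eqref{eq-Hin-mix-4} is a symptom of this. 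Up to the mixed term discussed next, the printed two-sided bound is therefore the vertical bound of Theorem~\ref{Th-ineq-both-side-vert} shifted by that horizontal sum. It fails whenever the vertical bounds are sharp and the sum is nonzero.

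Restoring the horizontal sum on the left is still not enough. (Equivalently, this means deleting $\tau^{\mathcal H}+\frac12\|\sigma\|^{2}-\frac12 g_1(A,A^{*})-\|A\|^{2}$ from both bounds.) The problem is that \eqref{eq-mix-3} also replaces $\sum_{i}Ric^{M}_{\mathcal V}(h_i)$ by $-\delta(N)-\delta^{*}(N^{*})-\delta(\sigma)-\delta^{*}(\sigma^{*})$, which is false in general. For the Hopf fibration $S^{7}(1)\to S^{4}(1/2)$ with $\nabla=\nabla^{*}=\nabla^{0}$, all four divergence terms vanish because $N=0$ and $\sigma=0$. Yet $\sum_{i}Ric^{M}_{\mathcal V}(h_i)=rs=12$, since every mixed sectional curvature equals $1$.

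What your route can legitimately give is only Theorem~\ref{Th-ineq-both-side-vert}, rewritten with a correct expression for the mixed sum. For that corrected form, your Step 3 would indeed reduce to the vertical equality case. As the statement stands, Step 3 has nothing to attach to.
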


\section{Conclusion and Future Research Directions}

In this paper, we have studied Ricci curvature inequalities for statistical
submersions by taking into account the vertical, horizontal, and mixed
distributions. In the vertical setting, we established a Chen--Ricci upper
bound and a Hineva-type lower bound involving the fundamental tensors
$T$ and $T^{*}$ and the intrinsic Ricci curvature of the fibres. Along the
horizontal distribution, we obtained a Chen--Ricci inequality in terms of
the tensors $A$ and $A^{*}$. Furthermore, by combining the vertical and
horizontal curvature relations, we derived corresponding estimates for the
mixed distribution. The equality cases were characterized through the
components and algebraic structures of the fundamental tensors and their
duals.

The examples presented in the paper demonstrate that the obtained
inequalities are not merely formal estimates. In particular, we exhibited
statistical submersions for which equality is attained and examples in
which the Chen--Ricci inequality is strict. Thus, the equality conditions
obtained in the theoretical results are genuinely realizable.

Several directions remain open for further investigation. It would be
interesting to establish analogous Ricci curvature inequalities for
special classes of statistical submersions, including statistical
submersions with invariant, anti-invariant, semi-invariant, and slant
structures. The obtained results may also be extended to statistical
submersions arising from statistical manifolds with almost Hermitian,
K\"ahler, Sasakian, Kenmotsu, cosymplectic, and quaternionic structures.
Another natural direction is to investigate Casorati, Wintgen, and
generalized $\delta$-invariant inequalities in the statistical submersion
setting. Finally, the interaction between the dual affine connections and
the geometry of the mixed distribution deserves further study, particularly
for curvature inequalities involving higher-order invariants.

\section*{Acknowledgements}

The author would like to express his sincere gratitude to the Department
of Mathematics, Banaras Hindu University, Varanasi, India, for providing
a supportive academic environment for this research.

\section*{Declarations}

\subsection*{Funding}

The author received financial support from the Council of Scientific and
Industrial Research (CSIR), New Delhi, India.

\subsection*{Conflict of Interest}

The author declares that there is no conflict of interest regarding the
publication of this paper.

\subsection*{Data Availability}

No datasets were generated or analyzed during the current study. The
results of this work are theoretical and do not involve any empirical
datasets.

\subsection*{Code Availability}

Not applicable.

\subsection*{Author Contributions}

The author was responsible for the conceptualization, methodology,
formal analysis, investigation, writing, and preparation of the manuscript.

\end{document}